\DeclarePairedDelimiter{\abs}{\lvert}{\rvert}
\DeclarePairedDelimiter{\norm}{\lVert}{\rVert}
\DeclarePairedDelimiter{\floor}{\lfloor}{\rfloor}
\DeclarePairedDelimiter{\inner}{\langle}{\rangle}
\DeclarePairedDelimiter{\set}{\lbrace}{\rbrace}
\DeclarePairedDelimiter{\br}{(}{)}
\DeclarePairedDelimiter{\sbr}{[}{]}
\DeclarePairedDelimiter{\seqn}{(}{)}%_{n \in \mathbb{N}}}
\DeclarePairedDelimiter{\seqk}{(}{)}%_{k \in \mathbb{N}}}
\DeclarePairedDelimiter{\ang}{\langle}{\rangle}
\DeclareMathOperator*{\esssup}{ess\,sup}
\DeclareMathOperator*{\essinf}{ess\,inf}
\newcommand{\sigess}{\sigma_{e}}
\newcommand{\sigessh}{\hat{\sigma}_{e}}
\newcommand{\sigdis}{\sigma_{d}}
\newcommand{\sigpoll}{\sigma_{\rm poll}}
\newcommand{\wto}{\rightharpoonup}
\newcommand{\gsrto}{\xrightarrow{\rm{gsr}}}
\newcommand{\srto}{\xrightarrow{\rm{sr}}}
\newcommand{\sto}{\xrightarrow{\rm{s}}}
\newcommand{\bs}{\backslash}
\newcommand{\R}{\mathbb R}
\newcommand{\C}{\mathbb C}
\newcommand{\N}{\mathbb N}
\renewcommand{\d}{\, \text{d}}
\newcommand{\T}{T}
\newcommand{\Sp}{S_\mathfrak{p}(\seqn{R_n})}
\newcommand{\Sr}{S_\mathfrak{r}}
\newcommand{\psip}{\psi_+}
\newcommand{\psim}{\psi_-}
\newcommand{\tpsip}{\tilde{\psi}_+}
\newcommand{\tpsim}{\tilde{\psi}_-}
\newcommand{\tpsipd}{\tilde{\psi}^d_+}
\newcommand{\tpsimd}{\tilde{\psi}^d_-}
\renewcommand{\geq}{\geqslant}
\renewcommand{\leq}{\leqslant}
\renewcommand{\epsilon}{\varepsilon}
\renewcommand{\subseteq}{\subset} 
\newtheorem{theorem}{Theorem}[section]
\newtheorem{proposition}[theorem]{Proposition}
\newtheorem{lemma}[theorem]{Lemma}
\newtheorem{corollary}[theorem]{Corollary}
\theoremstyle{remark}
\newtheorem{remark}[theorem]{Remark}
\theoremstyle{definition}
\newtheorem{assumption}{Assumption}
\theoremstyle{definition}
\newtheorem{definition}[theorem]{Definition}
\theoremstyle{definition}
\newtheorem{example}[theorem]{Example}
\numberwithin{equation}{section}
\title[Spectral inclusion and pollution for a class of perturbations]{Spectral inclusion and pollution for a class of dissipative perturbations}
\author{Alexei Stepanenko}
\address{Cardiff University, School of Mathematics, Senghennydd Road, Cardiff, UK,CF24 4AG}
\date{\today}
\email{stepanenkoa@cardiff.ac.uk}
\subjclass[2010]{34L05, 47A55, 47A58}
\keywords{spectral exactness, spectral inclusion, spectral pollution, essential spectrum, Sturm-Liouville, eigenvalue, dissipative}
\begin{document}

\begin{abstract}

Spectral inclusion and spectral pollution results are proved for sequences of linear operators of the form $T_0 + i \gamma s_n$ on a Hilbert space,  where $s_n$ is strongly convergent to the identity operator and $\gamma > 0$. 
We work in both an abstract setting and a more concrete Sturm-Liouville framework.
The results provide rigorous justification for a method of computing eigenvalues in spectral gaps.  

\end{abstract}

\maketitle

\section{Introduction}

In this paper, we study the eigenvalues of linear operators under a certain class of perturbations
with an emphasis on Schr\"odinger operators of the form,
\begin{equation}\label{eq:schrod-half-db}
    T_R = - \frac{\d^2}{\d x^2} + q + i \gamma \chi_{[0,R]} \qquad \text{on }L^2(0,\infty),
\end{equation}
where  $q$ is a possibly complex-valued function and $\chi$ is the characteristic function.
Specifically, we are concerned with how the eigenvalues of $T_R$ approximate the spectrum of the \textit{limit operator} $T = - \frac{\d^2}{\d x^2} + q + i \gamma$. 
As well as giving a precise account for the case of Schr\"odinger operators $T_R$ with the \textit{background potential} $q$ either in $L^1$ or eventually real periodic, we give general results for abstract operators of this form, utilising the notion of limiting essential spectrum recently introduced by B\"ogli (2018) \cite{SabineLocal2016}.

It is well known that the numerical approximation of the spectra of linear operators is often complicated by the possible presence of \textit{spectral pollution} \cite{poll_boffi2000,poll_method_davies2004,poll_kutz1984,poll_rappaz1997}.
The primary motivation for this paper is the justification of the \textit{dissipative barrier method}, designed to circumvent such issues.

The perturbations we consider belong to a class of operators which are often referred to as \textit{complex absorbing potentials} in the context of Schr\"{o}dinger operators. 
These arise in the study of the damped wave equation \cite{damped_christianson2009, damped_christanson2014, damped_frietas2018}, in the computation of resonances in quantum chemistry \cite{cap_riss1993, cap_stefanov2005,cap_zworski2018} and in the study of resonances in quantum chaos \cite{nonnenmacher2009,nonnenmacher2015}.

\subsection{Spectral Inclusion and Pollution}

Suppose that we are interested in approximating the spectrum of a (linear) operator $H$ on a Hilbert space $\mathcal{H}$ with domain $D(H)$.
Let $\seqn{H_n}$ be a sequence of operators on $\mathcal{H}$ whose spectra $\sigma(H_n)$ we hope will approximate the spectrum $\sigma(H)$ of $H$ as $n \to \infty$. 
The \textit{limiting spectrum} of $\seqn{H_n}$ is defined by 
\begin{equation}\label{eq:lim-spec-defn}
    \sigma(\seqn{H_n}) = \set{\lambda \in \C: \exists I \subseteq \N \text{ infinite},\, \exists \lambda_n \in \sigma(H_n),n\in I\text{ with } \lambda_n \to \lambda }.
\end{equation}
$\seqn{H_n}$ is said to be \textit{spectrally inclusive} for $H$ in some $\Omega \subseteq \C$ if
\begin{equation}\label{eq:spec-incl-defn}
    \sigma(H)\cap\Omega \subseteq \sigma(\seqn{H_n}).
\end{equation}
The set of \textit{spectral pollution} for $\seqn{H_n}$ with respect to $H$ is defined by 
\begin{equation}\label{eq:spec-poll-defn}
    \sigpoll(\seqn{H_n}) = \set{ \lambda \in \sigma(\seqn{H_n}): \lambda \notin \sigma(H)}.
\end{equation}
In order to reliably approximate the spectrum of $H$ in $\Omega \subseteq \C$ using $\seqn{H_n}$, we require that there is no spectral pollution in $\Omega$, $\sigpoll(\seqn{H_n})\cap\Omega = \emptyset$, and that $\seqn{H_n}$ is spectrally inclusive for $H$ in $\Omega$. If this holds, we say that $\seqn{H_n}$ is \textit{spectrally exact} for $H$ in $\Omega$. 

A typical scenario in which the set of spectral pollution may be non-empty is one in which the essential spectrum $\sigess(H)$ of $H$ has a band-gap structure and the operators $H_n$ have compact resolvents (i.e. $H_n$ have purely discrete spectra). 
For this reason, spectral pollution often causes issues for the numerical computation of eigenvalues in spectral gaps.
Various methods have been proposed to deal with such issues, we mention for instance \cite{method_boulton2007,poll_method_davies2004,method_hansen2008,method_levitin2004,method_lewin2010,method_zimmermann1995}.
We focus on one such method, which involves perturbing the operator of interest such as to move the spectrum, in a predictable way, away from the set of spectral pollution caused by numerical discretisation \cite{marletta2012eigenvalues}. 

\subsection{Dissipative Barrier Method}

Let us now describe this method. 
Let $T_0$ be a self-adjoint operator on a Hilbert space $\mathcal{H}$; suppose we are interested in numerically computing the spectrum of $T_0$. 
A dissipative barrier method for $T_0$ is defined by a bounded sequence of self-adjoint, $T_0$-compact operators $\seqn{s_n}$ tending strongly to the identity operator on $\mathcal{H}$.
If $\mathcal{H} = L^2(0,\infty)$, for instance, a typical choice for $s_n$ would be $\chi_{[0,n]}$.
Define the \textit{perturbed operators} by
\begin{equation}\label{eq:Tn-intro}
    T_n = T_0 + i \gamma s_n\qquad(n \in \N)
\end{equation}
where $\gamma > 0$.
The \textit{limit operator} $T$ is defined by $T = T_0 + i \gamma$.
The spectrum of $T_0$ is exactly encoded in the spectrum of $T$ since $\sigma(T) = \sigma(T_0) + i\gamma$.

Under appropriate additional conditions on $T_0$ and $s_n$, it can be proved that there exist spectrally inclusive numerical methods for the computation of $\sigma(T_n)$ for fixed $n$ \cite{db_aljawi2020,db_Bogli2017,db_marletta2010ntd,db_naboko2014,marletta2012eigenvalues,db_strauss2014}. Furthermore, any spectral pollution for these numerical methods lies on $\R$, away from $\sigma(T)$ uniformly in $n$.
The recently introduced notion of essential numerical range for unbounded operators can be used to prove general results of this form (see Theorems 4.5, 6.1 and 7.1 in \cite{EssNumRan2020}).
Thanks to such numerical methods for $\sigma(T_n)$, if $\seqn{T_n}$ can be shown to be spectrally exact for $T$ in an open neighbourhood in $\C$ of a closed subset $ i \gamma + I\subseteq i \gamma + \R$, then in principle one can reliably numerically compute the spectrum of $T_0$ in $I$. 

\subsection{Analysis of Expanding Barriers}

The aim of this paper is to provide spectral inclusion and spectral pollution results for sequences of operators of the form \eqref{eq:Tn-intro}.

In Section \ref{sec:spec-poll}, we work in an abstract setting, utilising the limiting essential spectrum $\sigess(\seqn{T_n})$ \cite{SabineLocal2016}, which is a set enclosing the regions in $\C$ where spectral exactness for $\seqn{T_n}$ with respect to $T$ may fail.
With additional assumptions on the operators $s_n$, for instance that they are projection operators, we prove new types of non-convex enclosures for $\sigess(\seqn{T_n})$ and conclude for these cases that $\seqn{T_n}$ is spectrally exact for $T$ in an open neighbourhood of any eigenvalue of $T$. 
The paper \cite{db_hinchcliffe2016} gives a similar spectral exactness conclusion for the case that $\seqn{s_n}$ are projection operators. 
However, as well as including different classes of perturbations $\seqn{s_n}$, both the statement and the proof of our results in Section \ref{sec:spec-poll} are far simpler than those of \cite{db_hinchcliffe2016}, owing to the use of the limiting essential spectrum.

The remainder of the paper is devoted to a more precise analysis for the case of Sturm-Liouville operators on the half-line. 
Our results in Sections \ref{sec:spec-incl} and \ref{sec:ess-incl} apply to operators for which the solutions of the corresponding Sturm-Liouville equation satisfy a certain decomposition.
In particular, this decomposition is easily shown to be satisfied by Schr\"odinger operators $T_R$ of the form \eqref{eq:schrod-half-db} with the background potential $q$ either in $L^1$ or real eventually periodic. 
In Section \ref{sec:spec-incl}, we show that any eigenvalue of the limit operator $T \equiv T_0 + i \gamma \equiv - \d^2 / \d x^2 + q + i \gamma$ for these cases is approximated by the spectrum of $T_R$  with exponentially small error as $R \to \infty$. A similar result was proved in \cite[Theorem 10]{marletta2012eigenvalues}, but only for $\gamma$ sufficiently small. In Section \ref{sec:ess-incl} we show that the essential spectrum of $T$ is approximated by the eigenvalues of $T_R$ with an error of order $O(1/R)$\footnote{Although band-ends and embedded resonances may have a different rate of convergence.}. 
The latter result is the first of its type to be reported. 

We also characterise the set of spectral pollution for the two cases of perturbed Schr\"odinger operators $T_R$.
Let $\seqn{R_n}\subset \R_+$ be any sequence such that $R_n \to \infty$.
Since the dissipative barrier perturbations $i \gamma \chi_{[0,R_n]}$ are relatively compact, the essential spectrum $\sigess(T_0)$ is contained in the spectral pollution $\sigpoll(\seqn{T_{R_n}})$ by Weyl's Theorem\footnote{With the possible exception of a few isolated points if $T_0$ is non-self-adjoint.}.
Note that this is in contrast to typical examples of spectral pollution, due to numerical discretisation, which are caused by spurious eigenvalues.
It is shown in Section \ref{sec:spec-incl} that $\sigess(T_0)$ is the only possible source of spectral pollution for the case $q \in L^1$. 
We encourage the reader to inspect Figures \ref{fig:1} and \ref{fig:2} in Section \ref{sec:num-ex}, which illustrate the eigenvalues of $T_R$ for this case.
For  $q$ eventually real periodic, the set of spectral pollution outside $\sigess(T_0)$ is enclosed in the set of zeros of a certain analytic function constructed from solutions of (time-independent) Schr\"odinger equations.
In fact, we prove that these zeros are contained inside the limiting essential spectrum $\sigess(\seqn{T_{R_n}})$.
Figure \ref{fig:3} in Section \ref{sec:num-ex} shows how spectral pollution may occur in this second case.

\subsection{Summary of Results}
 
The definitions of the essential spectrum $\sigess(H)$ and the discrete spectrum $\sigdis(H)$ for an operator $H$ are given by equations \eqref{eq:sig-ess-defn} and \eqref{eq:sigdis-conv} below.

\subsubsection*{Limiting Essential Spectrum and Spectral Pollution}

In Section \ref{sec:spec-poll}, we consider a self-adjoint operator $T_0$ on Hilbert space $\mathcal{H}$.
It is assumed that the operators $s_n$ ($n \in \N$) on $\mathcal{H}$ are self-adjoint, tend strongly to the identity operator as $n \to \infty$ and are bounded independently of $n$.
For $\gamma > 0$, we define the perturbed operators $T_n$ $(n \in \N)$ by \eqref{eq:Tn-intro} and the limit operator by $T = T_0 + i \gamma$.

The main tool in this section is the notion of \textit{limiting essential spectrum} $\sigess(\seqn{T_n})$ (see Definition \ref{def:lim-ess}). 
The results of \cite{SabineLocal2016} show that (Corollary \ref{col:exactness})
\begin{equation*}
    \seqn{T_n}\text{ is spectrally exact for }T\text{ in }\C \bs \sbr*{ \sigess(\seqn{T_n})\cup\sigess(\seqn{T^*_n})^* \cup \sigess(T)}.
\end{equation*}
The \textit{limiting essential numerical range} $W_e(\seqn{T_n})$ of $\seqn{T_n}$ (see Definition \ref{def:lim-ess-num}), introduced by B\"ogli, Marletta and Tretter (2020), is a convex set which in our set-up satisfies (Propositions \ref{prop:ess-num} and \ref{prop:lim-num-est})
\begin{equation*}
    \sigess(\seqn{T_n}) \cup \sigess(\seqn{T^*_n})^* \subset W_e(\seqn{T_n}) \subset \sbr*{ \text{\rm conv}(\sigessh(T_0)) \bs \set{\pm \infty} } \times i \gamma[s_-,s_+],
\end{equation*}
where $\sigessh(T_0)$ denotes the extended essential spectrum of $T_0$ (see Definition \ref{def:sig-ess-extended}) and  $s_\pm \in \R$ (defined by \eqref{eq:sp-and-sm}) satisfy $s_- - \epsilon \leq s_n \leq s_+ + \epsilon$ for any $\epsilon > 0$ and large enough $n$. 

The main results of Section \ref{sec:spec-poll} are non-convex enclosures for $\sigess(\seqn{T_n})$ complementing the enclosure provided by $W_e(\seqn{T_n})$.
\begin{enumerate}[label=(A)]
    \item (Theorem \ref{thm:encl}) If $s_n$ is a projection operator for all $n$, that is $s_n^2 = s_n$, then $\sigess(\seqn{T_n}) \cup \sigess(\seqn{T^*_n})^* \subset \Gamma_a = \Gamma_a(\sigess(T_0), \gamma)$, where
    \begin{equation}\label{eq:Gamma-a}
        \qquad \Gamma_a := \set*{\lambda \in \C : \Im(\lambda) \in [0,\gamma], \, \text{dist}(\Re(\lambda),\sigess(T_0)) \leq \sqrt{\Im(\lambda)(\gamma - \Im(\lambda))}}.
    \end{equation}
    If for any sequence $\seqn{u_n} \subset D(T_0)$ bounded in $\mathcal H$ with $\seqn{T_0 u_n}$ bounded in $\mathcal H$ we have
    \begin{equation*}
        \inner{s_n u_n, T_0 u_n} - \inner{T_0 u_n, s_n u_n} \to 0 \text{ \rm as }n \to \infty
    \end{equation*}
    (Assumption \ref{ass:loc-flat}) then $\sigess(\seqn{T_n}) \cup \sigess(\seqn{T^*_n})^* \subset \Gamma_b= \Gamma_b(\sigess(T_0), \gamma, s_\pm)$, where 
    \begin{equation*}
        \Gamma_b := \sigess(T_0) \times i \gamma [s_-,s_+].
    \end{equation*}
\end{enumerate}
In particular, if $s_n$ are projection operators or if Assumption \ref{ass:loc-flat} is satisfied then 
\begin{equation*}
    \seqn{T_n}\text{ is spectrally exact for }T\text{ in some open neighbourhood of any }\lambda \in \sigdis(T).
\end{equation*}
We clarify that by open neighbourhood we mean open neighbourhood in $\C$.
The enclosures $\Gamma_a$ and $\Gamma_b$ are illustrated in Figure \ref{fig:encl}.
Assumption \ref{ass:loc-flat} is verified for a class of perturbations for Schr\"odinger operators on Euclidean domains in Example \ref{ex:loc-flat}.

\subsubsection*{Second Order Operators on the Half-Line}

In Section \ref{sec:spec-incl}, we consider the case in which $T_0$ is a Sturm-Liouville operator on $L^2(0,\infty)$ and provide a more precise analysis compared to Section \ref{sec:spec-poll}. 
The Sturm-Liouville operator $T_0$ is allowed to have complex coefficients and is endowed with a complex mixed boundary condition at 0.  

We assume that for any $\lambda \in \C \bs \sigess(T_0)$, the solution space of the equation $\tilde{T}_0 u = \lambda u$ (here, $\tilde{T}_0$ is the differential expression corresponding to $T_0$) is spanned by solutions $\psi_\pm(\cdot,\lambda)$ admitting the decomposition
\begin{equation*}
    \psi_\pm(x,\lambda) = e^{\pm ik(\lambda)x} \tilde{\psi}_\pm(x,\lambda).
\end{equation*} 
Here, $k$ and $\tilde{\psi}_\pm(x,\cdot)$ are analytic functions on $\C \bs \sigess(T_0)$ with $\Im k >0$ and with $\tilde{\psi}_\pm(\cdot,\lambda)$ bounded.
A similar decomposition is required for $\psi'_\pm$ - see Assumption \ref{ass:exp} for the precise statement.

The perturbed operators in Section \ref{sec:spec-incl} are defined by
\begin{equation}\label{eq:TR-summary}
    T_R = T_0 + i \gamma \chi_{[0,R]}\qquad(R \in \R_+)
\end{equation}
where $\gamma \in \C \bs \set{0}$.
The limit operator is defined by $T = T_0 + i \gamma$.
Under these assumptions, for any $\seqn{R_n}$ with $R_n \to \infty$ , we construct a set $\Sp$ (equation \eqref{eq:Sp-defn}) and prove the following:
\begin{enumerate}[label=(B)]
    \item (Theorems \ref{thm:exp-conv} and \ref{thm:spec-poll}) For any eigenvalue $\lambda$ of $T$ with $\lambda \notin \Sp$ and $\lambda \notin \sigess(T_0)$, there exists eigenvalues $\lambda_n$ of $T_{R_n}$ ($n \in \N$) such that 
    \begin{equation*}
    |\lambda - \lambda_n| = O(e^{ - \beta R_n})\text{ as }n\to \infty     
    \end{equation*}
    for some $\beta > 0$ independent of $n$. 
    Furthermore, the set of spectral pollution for $\seqn{T_{R_n}}$ with respect to $T$ satisfies
    \begin{equation*}
        \sigpoll(\seqn{T_{R_n}}) \subseteq \sigess(T_0) \cup \Sp. 
    \end{equation*}
\end{enumerate}
The proofs utilise Rouch\'e's theorem applied to an analytic function (Lemma \ref{lem:fR}) whose zeros are the eigenvalues of $T_R$.
(B) implies that
\begin{equation*}
    \seqn{T_{R_n}}\text{ is spectrally exact for }T\text{ in }\C \bs \br*{\sigess(T_0) \cup \sigess(T) \cup \Sp}.
\end{equation*}
Assumption \ref{ass:exp} is verified in two cases:
\begin{itemize}
    \item (Examples \ref{ex:Lev-1} and \ref{ex:Lev-2}) $T_0$ is a Schr\"odinger operator with an $L^1$ potential. In this case, $\Sp = \emptyset$.
    \item (Examples \ref{ex:Floq-1} and \ref{ex:Floq-2}) $T_0$ is a Schr\"odinger operator with an eventually real $a$-periodic potential, $\gamma > 0$ and $R_n - R_{n-1} = a$ for all $n$. In this case, $\Sp$ is expressed as the zeros of a certain analytic function (equation \eqref{eq:Floq-Sp-zeros}). It is also proved that $\Sp \subset \sigess(\seqn{T_n})$.
    
\end{itemize}

\subsubsection*{Inclusion for the Essential Spectrum}

In Section \ref{sec:ess-incl}, we let $T_0$ be a Sturm-Liouville operator satisfying Assumption \ref{ass:exp}, as described above. 
In addition, we require that $\sigess(T_0)\subseteq \R $ and that $k$ and $\tilde{\psi}_+(x,\cdot)$, hence $\psi_+(x,\cdot)$, admit analytic continuations into an open neighbourhood of any point in the interior of $\sigess(T_0)$.
See Assumption \ref{ass:analytic} for the precise statement. 

The perturbed operators $T_R$ and the limit operator in Section \ref{sec:ess-incl} are defined by \eqref{eq:TR-summary} and $T = T_0 + i \gamma$ respectively, as in Section \ref{sec:spec-incl}.
We construct a set $\Sr \subseteq i \gamma + \R $ (equation \eqref{eq:Sr-defn}) and prove that: 
\begin{enumerate}[label=(C)]
    \item (Theorem \ref{thm:ess-incl}) For any $\mu$ in the interior of $\sigess(T_0)$ with $\mu + i \gamma \notin \Sr$, there exists eigenvalues $\lambda_R$ of $T_R$ ($R \in \R_+$) such that 
    \begin{equation*}
        |\lambda_R - (\mu + i \gamma)| = O\br*{\frac{1}{R}} \text{ as }R \to \infty.
    \end{equation*}
\end{enumerate}
The proof utilises Rouch\'e's theorem applied to an analytic function (Lemma \ref{lem:gR}) whose zeros are the eigenvalues of $T_R$.
In the case that
\begin{itemize}
    \item (Examples \ref{ex:Lev-3} and \ref{ex:Lev-4}) $T_0$ is a Schr\"odinger operator with an $L^1$ potential satisfying the Naimark condition or a dilation analyticity condition, or,
    \item (Examples \ref{ex:Floq-3} and \ref{ex:Floq-4}) $\gamma > 0$ and $T_0$ is a Schr\"odinger operator with a real, eventually periodic potential, endowed with a real mixed boundary condition at 0,
\end{itemize}
it is proven that Assumption \ref{ass:analytic} is satisfied and that 
\begin{equation*}
\mu + i \gamma \in \Sr \text{ if and only if }\mu \text{ is a resonance of }T_0\text{ embedded in }\sigess(T_0).
\end{equation*}
See equation \eqref{eq:Sr-0-defn} for the precise definition of a resonance used here.
For these cases, since resonances in the interior of $\sigess(T_0)$ are isolated, we can combine Theorem \ref{thm:ess-incl} with Theorem \ref{thm:spec-poll} and the characterisation of $\Sp$ to conclude that
\begin{equation*}
    \seqn{T_n}\text{ is spectrally exact for }T\text{ in some open neighbourhood of any }\mu \in \text{int}(\sigess(T)).
\end{equation*}

\subsection*{Notation and Conventions}
Let $\mathcal H$ be a separable Hilbert space with corresponding inner product $\inner{\cdot,\cdot}$ and norm $\norm{\cdot}$. Let $B_n \sto B$ as $n \to \infty$ denote strong convergence in $\mathcal H$ for bounded operators $B_n$ and $B$ on $\mathcal H$. Let $f_n \wto f$ as $n \to \infty$ denote weak convergence in $\mathcal{H}$ for $f_n,f\in \mathcal{H}$.
In this paper, we define the essential spectrum of an operator $H$ on $\mathcal{H}$ by
\begin{equation}\label{eq:sig-ess-defn}
    \sigess (H) = \set*{\lambda \in \mathbb C : \begin{matrix} \exists \seqn{u_n} \subset D(H)\textrm{ with }\norm{u_n} = 1,\\  u_n \wto 0,\, \norm{(H - \lambda) u_n} \to 0 \end{matrix}}
\end{equation}
which corresponds to $\sigma_{e2}$ in \cite{edmunds2018spectral}. The sequence $\seqn{u_n}$ appearing in \eqref{eq:sig-ess-defn} is referred to as a singular sequence. 
The discrete spectrum is defined by
\begin{equation}\label{eq:sigdis-conv}
    \sigdis(H) = \sigma(H) \bs \sigess(H).
\end{equation}

The convention we take with regards to the square-root function is to make the branch-cut along the positive semi-axis, so that $\Im \sqrt{z} \geq 0$ for all $z \in \C$.
We let $B_r(z)$ denote an open ball of radius $r > 0$ around a point $z \in \C$. 
In Sections \ref{sec:spec-incl} and \ref{sec:ess-incl}, $\psi'(x,z):=\frac{\d}{\d x}\psi(x,z)$.

\section{Limiting Essential Spectrum and Spectral Pollution}\label{sec:spec-poll}

In this section, we study spectral exactness for sequences of abstract operators $\seqn{T_n}$ of the form \eqref{eq:Tn-intro}. 
In Section \ref{subsec:review}, we briefly review the notions of limiting essential spectrum and essential numerical range. 
We refer the reader to \cite{SabineLocal2016} and \cite{EssNumRan2020} for a more detailed exposition. 
In Section \ref{subsec:discussion}, we discuss the application of limiting essential spectrum and essential numerical range to $\seqn{T_n}$.
In Section \ref{subsec:main}, we prove enclosures for the limiting essential spectrum of $\seqn{T_n}$. 

\subsection{Limiting Essential Spectrum and Numerical Range}\label{subsec:review}
 
Let $\mathcal{H}_n \subset \mathcal{H}$ ($n \in \N$) be closed subspaces and let $P_n: \mathcal{H} \to \mathcal{H}_n$ be the corresponding orthogonal projections. 
Assume that $P_n \sto I$.
Let $H$ and $H_n$ ($n \in \N$) be closed, densely-defined operators acting on $\mathcal{H}$ and $\mathcal{H}_n$ respectively. 

\begin{definition}\label{def:lim-ess}
 The \textit{limiting essential spectrum} of $\seqn{H_n}$ is defined by
\begin{equation}
    \sigess (\seqn{H_n}) = \set*{\lambda \in \mathbb C : \begin{matrix} \exists I\subseteq \mathbb N \textrm{ infinite},\,\exists u_n \in D(H_n), n \in I \textrm{ with }\\ \norm{u_n} = 1, \, u_n \wto 0,\, \norm{(H_n - \lambda) u_n} \to 0 \end{matrix}}. 
\end{equation}
\end{definition}

\begin{definition}\label{def:gsr}
 $\seqn{H_n}$ converges to $H$ in the \textit{generalised strong resolvent sense}, denoted by $H_n \gsrto H$, if 
 \begin{equation*}
    \exists n_0 \in \N: \exists \lambda_0 \in \bigcap_{n \geq n_0} \rho(H_n) \cap \rho(H): (H_n - \lambda_0)^{-1}P_n \sto (H - \lambda_0)^{-1}.
 \end{equation*}
\end{definition}

In the case that $\mathcal{H}_n = \mathcal{H}$ for all $n$, generalised strong resolvent convergence is equivalent to strong resolvent convergence and denoted by $H_n \srto H$. 

\begin{theorem}[{\cite[Theorem 2.3]{SabineLocal2016}}]\label{thm:bogli-local}
 If $H_n \gsrto H$ and $H^*_n \gsrto H^*$ then 
 \begin{equation}
    \sigpoll(\seqn{H_n}) \subset \sigess(\seqn{H_n}) \cup \sigess(\seqn{H^*_n})^*
 \end{equation}
and every isolated $ \lambda \in \sigma(H)$ outside $\sigess(\seqn{H_n}) \cup \sigess(\seqn{H^*_n})^*$ is approximated by $\seqn{H_n}$, that is, 
\begin{equation*}
    \set{\lambda \in \sigma(H): \lambda \text{ \rm isolated},\, \lambda \notin \sigess(\seqn{H_n}) \cup \sigess(\seqn{H^*_n})^*} \subset \sigma(\seqn{H_n}).
\end{equation*}
\end{theorem}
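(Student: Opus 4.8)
The plan is to deduce both assertions of the theorem from a single \emph{extraction principle} and then feed it the generalised strong resolvent convergence of Definition~\ref{def:gsr}. \emph{Extraction principle:} suppose $z_n \to z$ in $\C$ and $u_n \in D(H_n)$ satisfy $\norm{u_n} = 1$ and $\norm{(H_n - z_n)u_n} \to 0$ for $n$ in an infinite index set. Passing to a subsequence we may assume $u_n \wto u$; then if $u = 0$ we have $z \in \sigess(\seqn{H_n})$, while if $u \neq 0$ we have $z \in \sigma(H)$. The first case is immediate from Definition~\ref{def:lim-ess} (using $\norm{(H_n - z)u_n} \le \norm{(H_n - z_n)u_n} + |z_n - z| \to 0$). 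For the second, write, with $\lambda_0$ the point from Definition~\ref{def:gsr}, the relation $u_n = (H_n - \lambda_0)^{-1}P_n (H_n - z_n)u_n + (z_n - \lambda_0)(H_n - \lambda_0)^{-1}P_n u_n$. The operators $(H_n - \lambda_0)^{-1}P_n$ are uniformly bounded (Banach--Steinhaus, since they converge strongly), so the first summand tends to $0$ in norm; and since $(H_n - \lambda_0)^{-1}P_n \sto (H - \lambda_0)^{-1}$ while, from $H^*_n \gsrto H^*$ (extended as usual to the conjugate point $\bar\lambda_0$), $(H^*_n - \bar\lambda_0)^{-1}P_n \sto (H^* - \bar\lambda_0)^{-1}$, we get $\inner{(H_n - \lambda_0)^{-1}P_n u_n, g} = \inner{u_n, (H^*_n - \bar\lambda_0)^{-1}P_n g} \to \inner{u, (H^* - \bar\lambda_0)^{-1}g}$ for every $g$, i.e. $(H_n - \lambda_0)^{-1}P_n u_n \wto (H - \lambda_0)^{-1}u$. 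Taking weak limits in the relation yields $u = (z - \lambda_0)(H - \lambda_0)^{-1}u$, so $u \in D(H)$, $Hu = zu$, and $z \in \sigma(H)$.

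\emph{No spectral pollution outside the limiting essential spectra.} Let $\lambda \in \sigpoll(\seqn{H_n})$, so $\lambda \notin \sigma(H)$ and there are an infinite $I \subseteq \N$ and $\lambda_n \in \sigma(H_n)$ ($n \in I$) with $\lambda_n \to \lambda$. Since $H_n$ is closed, for each $n \in I$ either $H_n - \lambda_n$ fails to be bounded below, or it is bounded below but not surjective, hence has non-dense range, so that $\bar\lambda_n$ is an eigenvalue of $H^*_n$. Pass to an infinite subset of $I$ on which the same alternative holds throughout. In the first alternative choose unit $u_n \in D(H_n)$ with $\norm{(H_n - \lambda_n)u_n} \to 0$ and apply the extraction principle with $z_n = \lambda_n$, $z = \lambda$: the possibility $u \neq 0$ would force $\lambda \in \sigma(H)$, which is excluded, so $\lambda \in \sigess(\seqn{H_n})$. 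In the second alternative the same argument applied to $H^*_n$, $H^*$, $\bar\lambda_n$, $\bar\lambda$ gives $\bar\lambda \in \sigess(\seqn{H^*_n})$, i.e. $\lambda \in \sigess(\seqn{H^*_n})^*$. Hence $\sigpoll(\seqn{H_n}) \subseteq \sigess(\seqn{H_n}) \cup \sigess(\seqn{H^*_n})^*$.

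\emph{Approximation of isolated eigenvalues.} Let $\lambda$ be isolated in $\sigma(H)$ with $\lambda \notin \sigess(\seqn{H_n}) \cup \sigess(\seqn{H^*_n})^*$. All three sets are closed, so fix $\epsilon > 0$ with $\overline{B_\epsilon(\lambda)} \cap \sigma(H) = \set{\lambda}$ and $\overline{B_\epsilon(\lambda)} \cap (\sigess(\seqn{H_n}) \cup \sigess(\seqn{H^*_n})^*) = \emptyset$, and put $\Gamma = \partial B_\epsilon(\lambda)$. For $z \in \Gamma$: since $z \notin \sigma(H)$, the pollution bound gives $z \notin \sigma(\seqn{H_n})$, whence $z \in \rho(H_n)$ for all large $n$; and the extraction principle (whose conclusions $z \in \sigess(\seqn{H_n})$ and $z \in \sigma(H)$ are both excluded on $\Gamma$) rules out any near-singular sequence at $z$, giving a lower bound $\norm{(H_n - w)v} \ge c \norm{v}$ uniform for $w$ near $\Gamma$ and $n$ large. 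With surjectivity this yields $\sup_n \sup_{z \in \Gamma} \norm{(z - H_n)^{-1}} < \infty$, and a Neumann-series continuation from $\lambda_0$ — along $\C \setminus (\sigess(\seqn{H_n}) \cup \sigess(\seqn{H^*_n})^* \cup \sigma(H))$, where the $H_n$-resolvents are eventually defined and locally uniformly bounded — upgrades the convergence to $(z - H_n)^{-1}P_n \sto (z - H)^{-1}$ uniformly on the compact $\Gamma$. Therefore the Riesz projections $P^\Gamma_n = \tfrac{1}{2\pi i}\oint_\Gamma (z - H_n)^{-1}P_n \d z$ converge strongly to the Riesz projection $P^\Gamma = \tfrac{1}{2\pi i}\oint_\Gamma (z - H)^{-1} \d z$ of $H$ over $B_\epsilon(\lambda)$. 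Since $\lambda \in \sigma(H)$ is isolated, $P^\Gamma \neq 0$ (otherwise the resolvent of $H$ would extend holomorphically across $\lambda$); choosing $x$ with $P^\Gamma x \neq 0$ gives $P^\Gamma_n x \neq 0$, hence $P^\Gamma_n \neq 0$, hence $\sigma(H_n) \cap B_\epsilon(\lambda) \neq \emptyset$, for all large $n$. Running this for every $\delta \in (0, \epsilon)$ in place of $\epsilon$ and diagonalising produces $\lambda_n \in \sigma(H_n)$ with $\lambda_n \to \lambda$, i.e. $\lambda \in \sigma(\seqn{H_n})$.

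\emph{Main obstacle.} The delicate point is the uniform resolvent control on $\Gamma$ and its promotion to strong resolvent convergence there. For self-adjoint $H_n$ it is automatic, since $\norm{(z - H_n)^{-1}} = \operatorname{dist}(z, \sigma(H_n))^{-1}$; in the non-self-adjoint case one must exclude pseudospectral blow-up of $\norm{(z - H_n)^{-1}}$ near $\lambda$ despite $\sigma(H_n)$ staying away, and this is precisely where \emph{both} hypotheses $\lambda \notin \sigess(\seqn{H_n})$ and $\lambda \notin \sigess(\seqn{H^*_n})^*$ are used — through the two alternatives of the extraction principle applied to $H_n$ and to $H^*_n$ — together with the standard but slightly technical continuation argument that transports generalised strong resolvent convergence from the single reference point $\lambda_0$ to a neighbourhood of $\Gamma$.
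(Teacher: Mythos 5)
The paper does not prove this theorem: it is imported verbatim from B\"ogli \cite{SabineLocal2016}, so there is no in-paper argument to compare against. Your reconstruction follows what is essentially the standard route — a singular-sequence extraction dichotomy giving the pollution bound, and uniform resolvent bounds on a circle $\Gamma$ around the isolated point plus convergence of Riesz projections giving the inclusion — and the skeleton is sound: the identity for $u_n$, the adjoint formula $\bigl((H_n-\lambda_0)^{-1}P_n\bigr)^*=(H_n^*-\bar\lambda_0)^{-1}P_n$, the case split between failure of the lower bound and non-dense range, the non-vanishing of the Riesz projection at an isolated spectral point, and the shrinking-radius diagonalisation are all correct.

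The soft spot is the transport of resolvent convergence away from the reference points, which you assert rather than prove and which, as phrased, can fail: Definition \ref{def:gsr} gives strong convergence of $(H_n^*-\mu_0)^{-1}P_n$ at some $\mu_0$ that need not be $\bar\lambda_0$ (so \emph{extended as usual} is not automatic), and a Neumann-series continuation \emph{along} $\C\bs\bigl(\sigess(\seqn{H_n})\cup\sigess(\seqn{H_n^*})^*\cup\sigma(H)\bigr)$ would require $\lambda_0$ and $\Gamma$ to lie in one connected component of that set, which is not guaranteed. Both uses are rescued by the path-free two-point transfer: if $E_n:=(H_n-\lambda_0)^{-1}P_n-(H-\lambda_0)^{-1}\sto 0$ and $z\in\rho(H)$ satisfies $z\in\rho(H_n)$ for all large $n$ with $\sup_n\norm{(H_n-z)^{-1}}<\infty$, then
\begin{equation*}
(H_n-z)^{-1}P_n-(H-z)^{-1}=\bigl[(z-\lambda_0)(H_n-z)^{-1}P_nE_n+E_n\bigr]\bigl[I+(z-\lambda_0)(H-z)^{-1}\bigr]\sto 0.
\end{equation*}
Apply this (i) to the adjoint family with target $\bar\lambda_0$, the uniform bound $\norm{(H_n^*-\bar\lambda_0)^{-1}}=\norm{(H_n-\lambda_0)^{-1}}$ coming from Banach--Steinhaus applied to the strongly convergent $(H_n-\lambda_0)^{-1}P_n$ (alternatively, avoid $\bar\lambda_0$ altogether by pairing $(H_n-z_n)u_n$ with $g_n:=(H_n^*-\mu_0)^{-1}P_nw$, so that $g_n\to g:=(H^*-\mu_0)^{-1}w$ and $H_n^*g_n\to H^*g$ in norm, giving $\inner{u,H^*g}=z\inner{u,g}$ for all $g\in D(H^*)$, i.e.\ $u\in D(H)$ and $Hu=zu$); and (ii) to $H_n$ with targets $z\in\Gamma$, where the uniform lower bound is your extraction/compactness argument. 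For (ii) you also need $\Gamma\subset\rho(H_n)$ for all large $n$ with a single threshold, not one depending on $z$: this follows because otherwise points $z_n\in\sigma(H_n)\cap\Gamma$ along an infinite set would, by compactness of $\Gamma$, accumulate at a point of spectral pollution on $\Gamma$, contradicting your choice of $\epsilon$. With these repairs the argument is complete.
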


\begin{definition}\label{def:ess-num}
The \textit{essential numerical range} of $H$ is defined by
\begin{equation*}
    W_e(H) = \set{ \lambda \in \C : \exists \seqn{u_n} \subset D(H) \text{ with } \norm{u_n} = 1,\,u_n\wto0,\,\inner{H u_n, u_n} \to \lambda  }.
\end{equation*}
\end{definition}

\begin{definition}\label{def:lim-ess-num}
 The \textit{limiting essential numerical range} of $\seqn{H_n}$ is defined by
\begin{equation}
    W_e (\seqn{H_n}) = \set*{\lambda \in \mathbb C : \begin{matrix} \exists I\subseteq \mathbb N \textrm{ infinite},\,\exists u_n \in D(H_n), n \in I \textrm{ with }\\ \norm{u_n} = 1, \, u_n \wto 0,\, \inner{(H_n - \lambda) u_n,u_n} \to 0 \end{matrix}}. 
\end{equation}
\end{definition}

\begin{proposition}[{\cite[Proposition 5.6]{EssNumRan2020}}]\label{prop:ess-num}
    The limiting essential numerical range of $\seqn{H_n}$ is closed and convex with
    \begin{equation*}
     \text{\rm conv}(\sigess(\seqn{H_n})) \subset W_e (\seqn{H_n}).
    \end{equation*}
Furthermore, if $D(H_n) \cap D(H^*_n)$ is a core of $H^*_n$ for all $n$ then 
\begin{equation*}
     \text{\rm conv}(\sigess(\seqn{H_n}) \cup \sigess(\seqn{H^*_n})^* ) \subset W_e (\seqn{H_n}).
    \end{equation*}
\end{proposition}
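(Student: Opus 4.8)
The plan is to transplant, to the sequential setting, the classical argument that the essential numerical range of a single operator is closed and convex, and then to feed in the two kinds of singular sequence. Concretely I would establish four facts and assemble them: (i) $W_e(\seqn{H_n})$ is closed; (ii) $W_e(\seqn{H_n})$ is convex; (iii) $\sigess(\seqn{H_n}) \subset W_e(\seqn{H_n})$; and, assuming $D(H_n)\cap D(H^*_n)$ is a core of $H^*_n$ for each $n$, (iv) $\sigess(\seqn{H^*_n})^* \subset W_e(\seqn{H_n})$. Granting these, the first displayed inclusion is (iii) together with (i) and (ii), and the second is (iii), (iv), (i) and (ii) combined.

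Steps (i), (iii), (iv) I expect to be routine. For (i), given $\lambda^{(k)} \to \lambda$ with each $\lambda^{(k)} \in W_e(\seqn{H_n})$, I would fix witnessing data for each $\lambda^{(k)}$ and diagonalise against a fixed orthonormal basis of $\mathcal H$: choose indices $n_1 < n_2 < \cdots$ and corresponding unit vectors $u_{n_k}$ so that both $\abs{\inner{(H_{n_k}-\lambda^{(k)})u_{n_k},u_{n_k}}}$ and the first $k$ Fourier coefficients of $u_{n_k}$ are $< 1/k$; then $(u_{n_k})_k$ is weakly null, of unit norm, and $\inner{(H_{n_k}-\lambda)u_{n_k},u_{n_k}} = \inner{(H_{n_k}-\lambda^{(k)})u_{n_k},u_{n_k}} + (\lambda^{(k)}-\lambda) \to 0$. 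For (iii), a singular sequence $(u_n)$ for $\lambda \in \sigess(\seqn{H_n})$ already satisfies $\abs{\inner{(H_n-\lambda)u_n,u_n}} \le \norm{(H_n-\lambda)u_n} \to 0$ by Cauchy--Schwarz. For (iv), a singular sequence $(u_n)$ for $\overline{\lambda} \in \sigess(\seqn{H^*_n})$ lies in $D(H^*_n)$ but need not lie in $D(H_n)$, so I would use the core hypothesis to replace it by $\tilde u_n \in D(H_n)\cap D(H^*_n)$ close to $u_n$ in the graph norm of $H^*_n$ — still, after normalisation, of unit norm, weakly null, and with $\norm{(H^*_n-\overline{\lambda})\tilde u_n}\to 0$ — and then $\inner{(H_n-\lambda)\tilde u_n,\tilde u_n} = \overline{\inner{(H^*_n-\overline{\lambda})\tilde u_n,\tilde u_n}} \to 0$, again by Cauchy--Schwarz.

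The heart of the matter, and the step I expect to be the main obstacle, is convexity (ii). Given $\lambda_1 \ne \lambda_2$ in $W_e(\seqn{H_n})$ and $t \in (0,1)$, with witnessing weakly null unit sequences $(u_n)$ and $(v_n)$, I would want to build such a sequence for $\mu = t\lambda_1 + (1-t)\lambda_2$. Two obstacles arise. First, $(u_n)$ and $(v_n)$ need not be supported on the same index set, so they must be reconciled onto a common infinite subsequence and then, using weak nullity and a further diagonal extraction, rendered asymptotically orthogonal, $\inner{u_n,v_n}\to 0$; this reconciliation is the delicate point and is where the structural hypotheses on $\seqn{H_n}$ in force here (in particular its resolvent convergence) enter. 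Second, one cannot use a fixed combination $a u_n + b v_n$, since the cross term $\inner{H_n u_n,v_n}$ is uncontrolled; the classical device is to pass to the two-dimensional compression $M_n$ of $H_n$ to $\operatorname{span}\{u_n,v_n\}$ and invoke the Toeplitz--Hausdorff theorem, so that $W(M_n)$ is a genuine line segment joining $\inner{H_n u_n,u_n}$ to $\inner{H_n v_n,v_n}$. Choosing a unit $w_n \in \operatorname{span}\{u_n,v_n\}$ with $\inner{H_n w_n,w_n} = t\inner{H_n u_n,u_n} + (1-t)\inner{H_n v_n,v_n}$, and observing that asymptotic orthogonality keeps the coordinates of $w_n$ in $u_n,v_n$ bounded (hence $w_n \wto 0$), one obtains $\inner{(H_n-\mu)w_n,w_n}\to 0$, so $\mu \in W_e(\seqn{H_n})$, completing the argument.
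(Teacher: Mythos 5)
You should note at the outset that the paper offers no proof of this statement: it is imported verbatim from \cite{EssNumRan2020} (Proposition 5.6 there), so your sketch can only be measured against the argument in that reference. Your steps (i), (iii) and (iv) are correct and are essentially the standard arguments one finds there: diagonal extraction against a fixed orthonormal basis for closedness, Cauchy--Schwarz for $\sigess(\seqn{H_n}) \subset W_e(\seqn{H_n})$, and, for $\sigess(\seqn{H^*_n})^*$, approximation of the singular sequence in the graph norm of $H^*_n$ by vectors $\tilde u_n \in D(H_n)\cap D(H^*_n)$ followed by the identity $\inner{(H_n-\lambda)\tilde u_n,\tilde u_n} = \overline{\inner{(H^*_n-\overline{\lambda})\tilde u_n,\tilde u_n}}$. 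These inclusions are, incidentally, the only parts of the proposition the present paper actually uses.

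The convexity step (ii), which you yourself flag as the heart of the matter, contains a genuine gap in two places. First, the claim that weak nullity plus a further diagonal extraction renders $\inner{u_n,v_n}\to 0$ is false: weak convergence controls pairings against \emph{fixed} vectors, not against a second sequence that moves with $n$; all one can extract is $\inner{u_n,v_n}\to\tau$ with $\abs{\tau}\leq 1$, and in the degenerate case $\abs{\tau}=1$ the coordinates of $w_n$ in $\operatorname{span}\{u_n,v_n\}$ need not be bounded, so $w_n\wto 0$ is not guaranteed; nor can you orthogonalise $v_n$ against $u_n$, because the correction is small only in the $\mathcal H$-norm while the quadratic form involves the unbounded $H_n$, so this case needs a separate argument that your sketch does not supply. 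Second, and more seriously, you defer the reconciliation of the two witnessing index sets to ``the structural hypotheses on $\seqn{H_n}$ in force here (in particular its resolvent convergence)'', but Proposition \ref{prop:ess-num} is stated, both here and in the cited source, for an arbitrary sequence of closed, densely defined operators $H_n$ on $\mathcal H_n$ with $P_n \sto I$; no resolvent convergence is assumed, so there is nothing of that kind to invoke. The difficulty is not cosmetic: taking $\mathcal H_n=\mathcal H$ and $H_n=aI$ for even $n$, $H_n=bI$ for odd $n$ with $a\neq b$, the even and odd subsequences witness $a,b\in W_e(\seqn{H_n})$ in the sense of Definition \ref{def:lim-ess-num}, while no infinite index set can witness $\tfrac{1}{2}(a+b)$, since the relevant quadratic form has constant modulus $\tfrac{1}{2}\abs{a-b}$ along every subsequence. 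Hence no argument of the form ``reconcile the index sets, then compress and apply Toeplitz--Hausdorff'' can establish the convexity assertion at this level of generality; closing this step requires the precise formulation and hypotheses of \cite{EssNumRan2020}, and as written your convexity argument does not go through.
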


\subsection{Enclosures for the Limiting Essential Spectrum}\label{subsec:discussion}

Throughout the remainder of the section, let $T_0$ and $s_n$ ($n \in \N$) be self-adjoint operators on $ \mathcal{H}$. 
Let $\gamma > 0$ and define the perturbed operators, as in the introduction, by 
\begin{equation}\label{eq:Tn-abstract}
    T_n = T_0 + i \gamma s_n.\qquad(n \in \N)
\end{equation}
Assume that $s_n \sto I$  and that $\norm{s_n} \leq C$ for some $C > 0$ independent of $n$. 
Define the limit operator by $T = T_0 + i \gamma$ as in the introduction. $T_n$ converges to $T$ in the strong sense, and in fact, as we shall show in the following proof, in the strong resolvent sense.  

\begin{corollary}\label{col:exactness} $\seqn{T_n}\text{ is spectrally exact for }T\text{ in }\C \bs \sbr*{\sigess(\seqn{T_n})\cup\sigess(\seqn{T^*_n})^* \cup \sigess(T)}$
\end{corollary}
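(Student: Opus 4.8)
The plan is to deduce this corollary directly from Theorem \ref{thm:bogli-local} by verifying that its hypotheses are met for the sequence $\seqn{T_n}$ and the limit operator $T$ in the present setting, where $\mathcal{H}_n = \mathcal{H}$ and $P_n = I$ for all $n$. The key point to establish is that $T_n \srto T$ and $T_n^* \srto T^*$, after which Theorem \ref{thm:bogli-local} immediately gives $\sigpoll(\seqn{T_n}) \subset \sigess(\seqn{T_n}) \cup \sigess(\seqn{T_n^*})^*$ together with the approximation statement for isolated points of $\sigma(T)$ outside that set; translating these two facts into the language of spectral exactness on the complement of $\sigess(\seqn{T_n})\cup\sigess(\seqn{T_n^*})^* \cup \sigess(T)$ is then a matter of unwinding definitions.

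First I would verify strong resolvent convergence. Since $T_0$ is self-adjoint and $\gamma > 0$, each $T_n = T_0 + i\gamma s_n$ has spectrum in the strip $\{ \Im \lambda \in \gamma[s_-,s_+]\}$ by the numerical range bound ($s_n$ bounded self-adjoint), and likewise $T = T_0 + i\gamma$ has spectrum on the line $\Im\lambda = \gamma$; in particular one can choose a common $\lambda_0$, e.g. any real number, lying in $\bigcap_n \rho(T_n) \cap \rho(T)$, with resolvents bounded uniformly in $n$ (the distance from $\lambda_0 \in \R$ to the spectral strip of $T_n$ is bounded below independently of $n$ because $\norm{s_n} \leq C$). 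To get $(T_n - \lambda_0)^{-1} \sto (T - \lambda_0)^{-1}$ I would use the second resolvent identity,
\begin{equation*}
    (T_n - \lambda_0)^{-1} - (T - \lambda_0)^{-1} = i\gamma (T_n - \lambda_0)^{-1} (I - s_n)(T - \lambda_0)^{-1},
\end{equation*}
and note that for fixed $f \in \mathcal H$ the vector $(T - \lambda_0)^{-1} f$ is fixed, $(I - s_n)$ applied to it tends to $0$ strongly since $s_n \sto I$, and $(T_n - \lambda_0)^{-1}$ has uniformly bounded norm; hence the right-hand side tends to $0$ in $\mathcal H$. Since $T_0$ and $s_n$ are self-adjoint, $T_n^* = T_0 - i\gamma s_n$ and $T^* = T_0 - i\gamma$, so the identical argument gives $T_n^* \srto T^*$. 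This also justifies the remark preceding the corollary that $T_n \to T$ in the strong resolvent sense.

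Having established both generalised (here ordinary) strong resolvent convergences, I would apply Theorem \ref{thm:bogli-local} and finish by a direct argument. Write $\Sigma = \sigess(\seqn{T_n})\cup\sigess(\seqn{T_n^*})^* \cup \sigess(T)$ and fix an open set $\Omega \subset \C \bs \Sigma$. No spectral pollution in $\Omega$: by the theorem, $\sigpoll(\seqn{T_n}) \subset \sigess(\seqn{T_n}) \cup \sigess(\seqn{T_n^*})^* \subset \Sigma$, so $\sigpoll(\seqn{T_n}) \cap \Omega = \emptyset$. Spectral inclusion in $\Omega$: let $\lambda \in \sigma(T) \cap \Omega$. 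Since $\Omega \cap \Sigma = \emptyset$ and $\sigess(T) \subset \Sigma$, we have $\lambda \in \sigdis(T)$, so $\lambda$ is an isolated point of $\sigma(T)$; as $\lambda \notin \sigess(\seqn{T_n}) \cup \sigess(\seqn{T_n^*})^*$, the second conclusion of Theorem \ref{thm:bogli-local} gives $\lambda \in \sigma(\seqn{T_n})$. Hence $\sigma(T) \cap \Omega \subset \sigma(\seqn{T_n})$, which is spectral inclusion, and combined with the absence of pollution this is precisely spectral exactness in $\Omega$. Since $\C \bs \Sigma$ is itself open, we may take $\Omega = \C \bs \Sigma$.

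I do not anticipate a serious obstacle here: the only nontrivial ingredient is the uniform boundedness of $(T_n - \lambda_0)^{-1}$, which follows from the uniform bound $\norm{s_n} \leq C$ via the standard estimate that the resolvent of an operator is controlled by the reciprocal of the distance to its numerical range. The remainder is bookkeeping with the definitions of spectral inclusion, pollution and exactness given in the introduction, together with a direct citation of Theorem \ref{thm:bogli-local}.
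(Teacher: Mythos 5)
Your overall route is exactly the paper's: establish $T_n \srto T$ and $T_n^* \srto T^*$ via the second resolvent identity, invoke Theorem \ref{thm:bogli-local}, and unwind the definitions using that points of $\sigdis(T)$ are isolated. However, one step as written would fail: you propose to take $\lambda_0$ to be ``any real number'' and justify $\lambda_0 \in \bigcap_n \rho(T_n)$ with a uniform resolvent bound by claiming that the distance from $\lambda_0 \in \R$ to the spectral strip of $T_n$ is bounded below. The strip $\set{\Im \lambda \in \gamma[s_-,s_+]}$ (or, uniformly in $n$, $\set{|\Im\lambda| \leq \gamma \norm{s_n}}$) in general \emph{contains} the real axis: in the motivating examples $s_n = \chi_{[0,n]}$ one has $s_- = 0$, and indeed since $i\gamma s_n$ is then relatively compact, $\sigess(T_n) = \sigess(T_0) \subset \R$, so a real $\lambda_0$ typically lies in $\sigma(T_n)$ for every $n$ and the resolvents you manipulate do not exist. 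The fix is routine but must be made: choose $\lambda_0$ with $|\Im \lambda_0| > \gamma C$ where $\norm{s_n} \leq C$; then $\norm{(T_0 - \lambda_0)^{-1}} \leq |\Im\lambda_0|^{-1}$ by self-adjointness of $T_0$, a Neumann series for $T_n - \lambda_0 = (T_0 - \lambda_0)\bigl(I + (T_0-\lambda_0)^{-1} i\gamma s_n\bigr)$ gives $\lambda_0 \in \rho(T_n)$ with $\norm{(T_n - \lambda_0)^{-1}} \leq (|\Im\lambda_0| - \gamma C)^{-1}$ uniformly in $n$, and your resolvent-identity argument (and its mirror image for $T_n^* = T_0 - i\gamma s_n$) then goes through verbatim.

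A second, smaller point: you assert that $\lambda \in \sigdis(T)$ is automatically isolated. With the paper's definitions, $\sigess$ is $\sigma_{e2}$ and $\sigdis(H) = \sigma(H)\bs\sigess(H)$, and for a general closed operator such points need not be isolated; Theorem \ref{thm:bogli-local} only covers isolated points of $\sigma(T)$. Here isolation does hold, but it needs the observation the paper makes explicitly: $\sigma(T) = \sigma(T_0) + i\gamma$ and $\sigdis(T) = \sigdis(T_0) + i\gamma$ with $T_0$ self-adjoint, so every point of $\sigdis(T)$ is an isolated eigenvalue. With these two repairs your proof is the same as the paper's.
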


\begin{proof}
 
 The fact that $T_n \srto T$ and $T_n^* = T_0 - i \gamma s_n \srto T_0 - i \gamma = T^*$ follows from an application of the resolvent identity, using $s_n \sto I$, the self-adjointness of $T_0$ and the uniform boundedness of the sequence of operators $\seqn{s_n}$.
 By Theorem \ref{thm:bogli-local}, $\sigpoll(\seqn{T_n}) \subset \sigess(\seqn{T_n}) \cup \sigess(\seqn{T^*_n})^*$ and 
 \begin{equation*}
\set{\lambda \in \sigma(T)  : \lambda \text{ isolated} ,\, \lambda \notin \sigess(\seqn{T_n})\cup\sigess(\seqn{T^*_n})^*} \subset \sigma(\seqn{T_n}).
 \end{equation*}
 The corollary follows from the fact that every element of $\sigdis(T) = \sigdis(T_0) + i \gamma$ is isolated since $T_0$ is self-adjoint \cite{edmunds2018spectral}.
\end{proof}

Since $D(T_n) = D(T^*_n) = D(T_0)$, Proposition \ref{prop:ess-num} implies that the set $\sigess(\seqn{T_n}) \cup \sigess(\seqn{T^*_n})^*$ is contained in the limiting essential numerical range $W_e(\seqn{T_n})$ and so
 $\seqn{T_n}$ is spectrally exact for $T$ in $\C \bs \sbr{W_e(\seqn{T_n}) \cup \sigess(T)}$. 
The limiting essential numerical range is typically easier to study than the limiting essential spectrum. 
For sequences of operators of the form \eqref{eq:Tn-abstract}, the limiting essential numerical range $W_e(\seqn{T_n})$ is contained in a strip.
To state this fact, we shall require the notion of extended essential spectrum.

\begin{definition}\label{def:sig-ess-extended}
 The \textit{extended essential spectrum} $\sigessh(H) \subset \sigess(H) \cup \set{\pm \infty}$ of a self-adjoint operator $H$ on $\mathcal{H}$ is defined as the union of $\sigess(H)$ with $+\infty$ and/or $-\infty$ if $H$ is unbounded from above and/or below respectively. 
\end{definition}
Throughout the remainder of the section, let 
\begin{equation}\label{eq:sp-and-sm}
s_- := \liminf_{n \to \infty} \inf_{u \in \mathcal{H}:\norm{u} = 1}\inner{s_nu,u}\quad\text{and}\quad s_+ := \limsup_{n \to \infty} \sup_{u \in \mathcal{H}:\norm{u} = 1}\inner{s_n u,u}.
\end{equation}
Then, for any $\epsilon > 0$ and sufficiently large $n$, $s_- - \epsilon \leq s_n \leq s_+ + \epsilon$. 
\begin{proposition}\label{prop:lim-num-est}
$W_e(\seqn{T_n}) \subset \sbr*{ \text{\rm conv}(\sigessh(T_0)) \bs \set{\pm \infty} } \times i\gamma[s_-,s_+]$
\end{proposition}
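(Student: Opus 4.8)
The plan is to exploit that, for an operator of the form \eqref{eq:Tn-abstract} with $T_0$ and $s_n$ self-adjoint, the real and imaginary parts of a point of the limiting essential numerical range are controlled separately, by $T_0$ and by the $s_n$ respectively. Let $\lambda \in W_e(\seqn{T_n})$ and choose an infinite $I \subseteq \N$ together with unit vectors $u_n \in D(T_n) = D(T_0)$, $n \in I$, such that $u_n \wto 0$ and $\inner{(T_n - \lambda) u_n, u_n} \to 0$, i.e. $\inner{T_n u_n, u_n} \to \lambda$. Since $T_0$ and $s_n$ are self-adjoint and $s_n$ is bounded, the numbers $\inner{T_0 u_n, u_n}$ and $\inner{s_n u_n, u_n}$ are real and $\inner{T_n u_n, u_n} = \inner{T_0 u_n, u_n} + i\gamma \inner{s_n u_n, u_n}$; taking real and imaginary parts gives $\inner{T_0 u_n, u_n} \to \Re \lambda$ and $\gamma \inner{s_n u_n, u_n} \to \Im \lambda$ as $n \to \infty$ along $I$.

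For the imaginary part I would just invoke the observation following \eqref{eq:sp-and-sm}: for each $\epsilon > 0$ and all sufficiently large $n$ one has $s_- - \epsilon \leq \inner{s_n u_n, u_n} \leq s_+ + \epsilon$ (using $\norm{u_n} = 1$; note that $s_\pm$ are finite because $\norm{s_n} \leq C$), so letting $n \to \infty$ and then $\epsilon \to 0$ gives $\Im \lambda \in \gamma[s_-, s_+]$.

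The real part is the step carrying the content. As $\Re\lambda = \lim_{n \in I} \inner{T_0 u_n, u_n}$ is a finite real number, it suffices to show $\Re\lambda \in \text{\rm conv}(\sigessh(T_0))$, which is precisely the inclusion $W_e(T_0) \subseteq \text{\rm conv}(\sigessh(T_0)) \bs \set{\pm\infty}$ for the self-adjoint operator $T_0$ (this may alternatively be quoted from \cite{EssNumRan2020}). I would establish it directly from the spectral theorem. If $\sigess(T_0) = \emptyset$ then $T_0$ has compact resolvent, one checks that $\inner{T_0 u_n, u_n}$ cannot tend to a finite limit when $u_n \wto 0$, and both sides of the claimed inclusion are empty; so assume $\sigess(T_0) \neq \emptyset$. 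If $T_0$ is unbounded above then $+\infty \in \sigessh(T_0)$ and no upper bound on $\Re\lambda$ is required; otherwise set $\mu_+ := \sup \sigess(T_0) \in \R$ and, for $\epsilon > 0$, let $E := E_{T_0}\big((\mu_+ + \epsilon, \infty)\big)$ be the corresponding spectral projection, which has finite rank (the spectrum of $T_0$ above $\mu_+ + \epsilon$ consists of finitely many eigenvalues of finite multiplicity). Then $\norm{E u_n} \to 0$ since $u_n \wto 0$, and because $E$ reduces $T_0$,
\[
\inner{T_0 u_n, u_n} = \inner{T_0 E u_n, E u_n} + \inner{T_0 (I - E) u_n, (I - E) u_n} \leq \norm{T_0 E}\,\norm{E u_n}^2 + (\mu_+ + \epsilon)\,\norm{(I - E) u_n}^2;
\]
letting $n \to \infty$ gives $\Re\lambda \leq \mu_+ + \epsilon$, hence $\Re\lambda \leq \mu_+$ as $\epsilon > 0$ was arbitrary. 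Applying the same argument to $-T_0$ yields the matching lower bound, and the two bounds together say exactly that $\Re\lambda \in \text{\rm conv}(\sigessh(T_0)) \bs \set{\pm\infty}$. Combined with the imaginary-part estimate, this proves the proposition.

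The main obstacle is this final spectral-theoretic estimate, namely the fact that a self-adjoint operator admits no essential-numerical-range points outside the convex hull of its extended essential spectrum; the decoupling of real and imaginary parts, the imaginary-part bound, and the treatment of the unbounded and degenerate cases are then routine.
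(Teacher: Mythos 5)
Your proposal follows the same route as the paper: split $\inner{(T_n-\lambda)u_n,u_n}\to 0$ into real and imaginary parts using the self-adjointness of $T_0$ and $s_n$, obtain $\Im\lambda\in\gamma[s_-,s_+]$ directly from the definition \eqref{eq:sp-and-sm}, and reduce the real part to the fact that $W_e(T_0)=\text{\rm conv}(\sigessh(T_0))\setminus\{\pm\infty\}$ for self-adjoint $T_0$. The only difference is that the paper simply quotes this identity from \cite{EssNumRan2020} (Theorem 3.8), whereas you prove the inclusion you actually need by hand, via finite-rank spectral projections $E_{T_0}((\mu_++\epsilon,\infty))$ and the weak convergence $u_n\wto 0$. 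That self-contained argument is correct in the main case (the finite-rank claim holds because spectrum of a self-adjoint operator outside the essential spectrum consists of isolated eigenvalues of finite multiplicity, which cannot accumulate in $[\mu_++\epsilon,\infty)$ when $T_0$ is bounded above), and it is a perfectly good elementary substitute for the citation.

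One sentence of your treatment of the degenerate case is, however, false as stated. If $\sigess(T_0)=\emptyset$ but $T_0$ is unbounded above \emph{and} below (compact resolvent with eigenvalues $\pm a_n\to\pm\infty$, say), then $u_n=2^{-1/2}(e_n^++e_n^-)$, with $e_n^\pm$ the eigenvectors for $\pm a_n$, is a weak null sequence of unit vectors with $\inner{T_0u_n,u_n}=0$; so $\inner{T_0u_n,u_n}$ can very well tend to a finite limit, and neither side of the inclusion need be empty, since $\sigessh(T_0)=\{\pm\infty\}$ makes the real factor $\text{\rm conv}(\sigessh(T_0))\setminus\{\pm\infty\}$ equal to all of $\R$. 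The slip is harmless for the proposition: in that subcase no bound on $\Re\lambda$ is required, exactly as in your ``unbounded above/below'' branches, while your compactness argument (valid when $T_0$ is semibounded with empty essential spectrum) does correctly give $W_e(\seqn{T_n})=\emptyset$ there, which is the only situation where the right-hand side is genuinely empty. So the proof is easily repaired, but the claim ``$\inner{T_0u_n,u_n}$ cannot tend to a finite limit when $\sigess(T_0)=\emptyset$'' should be restricted to the semibounded case.
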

\begin{proof}
 Let $\lambda \in W_e(\seqn{T_n})$.
 Then there exist $I \subset \N$ infinite and $\seqn{u_n}_{n \in I} \subset D(T_0)$ such that $\norm{u_n} = 1$ for all $n \in I$, $u_n \wto 0$ and $\inner{(T_n - \lambda)u_n,u_n} \to 0$. 
 Taking the real part of the inner product, we have $\inner{(T_0 - \Re(\lambda))u_n,u_n} \to 0$ which implies that 
 \begin{equation*}
  \Re(\lambda) \in W_e(T_0) =\text{\rm conv}(\sigessh(T_0)) \bs \set{\pm \infty}
 \end{equation*}
where we used \cite[Theorem 3.8]{EssNumRan2020} in the equality. Finally, $\Im \inner{(T_n - \lambda)u_n,u_n} \to 0$ implies that $\Im(\lambda) = \gamma \inner{s_n u_n, u_n} + o(1) \in \gamma [s_-,s_+]$. 
\end{proof}

\subsection{Main Abstract Results}\label{subsec:main}

In the main result of this section, Theorem \ref{thm:encl}, we shall prove non-convex enclosures for the limiting essential spectrum $\sigess(\seqn{T_n})$ that complement the enclosure provided by the limiting essential numerical range.
We shall require additional assumptions on the perturbing operators $\seqn{s_n}$.
In part (a) of the theorem, we simply require that $s_n$ are projection operators.
An interesting feature of the enclosure of part (a) is that it is independent of the perturbing operators $\seqn{s_n}$, depending only on $\sigess(T_0)$ and $\gamma$.
The hypothesis for part (b) of the theorem, Assumption \ref{ass:loc-flat}, is given below.
An example of a class of perturbations for Schr\"odinger operators satisfying this assumption is provided in Example \ref{ex:loc-flat}. 
The enclosures are illustrated in Figure \ref{fig:encl}.

\begin{figure}[t]
\centering
\includegraphics[width=\linewidth]{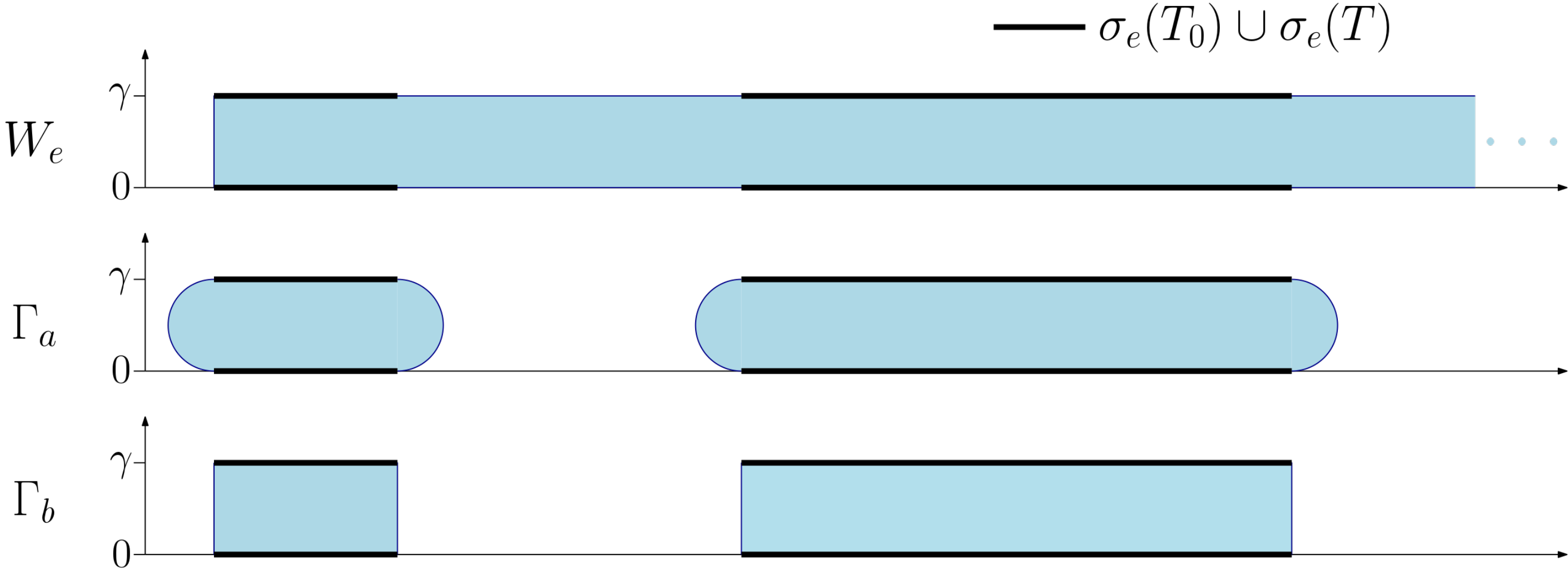}
\caption{Illustration of various enclosures for the limiting essential spectrum: the limiting essential numerical range $W_e = W_e(\seqn{T_n})$, the enclosure $\Gamma_a = \Gamma_a(\sigess(T_0), \gamma)$ of Theorem \ref{thm:encl} (a) and the enclosure $\Gamma_b = \Gamma_b(\sigess(T_0), \gamma, s_\pm)$ of Theorem \ref{thm:encl} (b). The illustration assumes that $T_0$ is unbounded only from above, $(s_-,s_+)=(0,1)$ and that the plotted region shows the smallest two spectral bands.}
\label{fig:encl}
\end{figure}

\begin{lemma}\label{lem:kato}
 Let $H$ be a self-adjoint operator on $\mathcal H$.
 If for some $\eta \in \R$ and $\epsilon > 0$ there exists a sequence $\seqn{u_n} \subset D(H)$ with $\norm{u_n} = 1$ for all $n$, $u_n \wto 0$ and  $\norm{(H - \eta) u_n} \to \epsilon$ then
\begin{equation*}
 \text{\rm dist}(\eta, \sigess(H)) \leq \epsilon.
\end{equation*}
\end{lemma}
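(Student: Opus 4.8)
The plan is to argue by contradiction using the spectral theorem for the self-adjoint operator $H$. Suppose $\text{dist}(\eta,\sigess(H)) > \epsilon$; then we can choose $\delta > 0$ with $\text{dist}(\eta,\sigess(H)) > \epsilon + \delta$, so that the interval $[\eta - \epsilon - \delta, \eta + \epsilon + \delta]$ meets $\sigma(H)$ only in a finite set of isolated eigenvalues of finite multiplicity. Let $E$ denote the spectral measure of $H$ and set $P := E\big( [\eta - \epsilon - \delta,\eta+\epsilon+\delta]\big)$; by the above, $\text{ran}(P)$ is finite-dimensional.

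Next I would split the singular sequence as $u_n = P u_n + (I - P) u_n$. Since $u_n \wto 0$ and $P$ has finite rank, $\norm{P u_n} \to 0$; hence $\norm{(I-P) u_n} \to 1$. On the orthogonal complement $\text{ran}(I-P)$, the operator $H - \eta$ is, by the spectral theorem, bounded below in norm by $\epsilon + \delta$: indeed $\norm{(H-\eta)(I-P)v}^2 = \int_{|\mu - \eta| \geq \epsilon + \delta} (\mu - \eta)^2 \, \d\inner{E(\mu) v, v} \geq (\epsilon+\delta)^2 \norm{(I-P) v}^2$. Applying this with $v = u_n$, and noting $(H - \eta) P u_n \to 0$ because $P u_n \to 0$ and $(H-\eta)P$ is a bounded (finite-rank) operator, we get
\begin{equation*}
 \liminf_{n \to \infty} \norm{(H - \eta) u_n} \geq \liminf_{n \to \infty} \norm{(H-\eta)(I-P)u_n} - \limsup_{n\to\infty}\norm{(H-\eta)Pu_n} \geq (\epsilon + \delta)\cdot 1 - 0 = \epsilon + \delta.
\end{equation*}
This contradicts $\norm{(H-\eta)u_n} \to \epsilon$, completing the proof.

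The only mildly delicate point is the bookkeeping with the finite-rank projection: one must be sure that $P u_n \to 0$ strongly (which follows from weak convergence $u_n \wto 0$ and $\text{ran}(P)$ finite-dimensional, since weak and strong convergence coincide on a finite-dimensional space) and that $(H-\eta)P$ is bounded so that it maps the null sequence $P u_n$ to a null sequence. Everything else is a direct invocation of the spectral calculus. An essentially equivalent route, if one prefers to avoid explicitly naming $\delta$: observe that $\norm{(H-\eta)u_n}^2 = \int (\mu-\eta)^2 \d\inner{E(\mu)u_n,u_n}$ and that the portion of this integral coming from $\{|\mu - \eta| < \text{dist}(\eta,\sigess(H))\}$ is carried by a finite-rank spectral projection, hence tends to $0$; the remaining portion is bounded below by $\text{dist}(\eta,\sigess(H))^2 (1 - o(1))$, forcing $\epsilon = \lim \norm{(H-\eta)u_n} \geq \text{dist}(\eta,\sigess(H))$.
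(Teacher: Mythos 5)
Your proof is correct, but it takes a genuinely different route from the paper. You argue by contradiction through the spectral theorem: if $\text{dist}(\eta,\sigess(H)) > \epsilon + \delta$, the spectral projection $P = E([\eta-\epsilon-\delta,\eta+\epsilon+\delta])$ has finite rank, so $Pu_n \to 0$ by weak convergence, $\norm{(I-P)u_n} \to 1$, and the spectral calculus bound $\norm{(H-\eta)(I-P)u_n} \geq (\epsilon+\delta)\norm{(I-P)u_n}$ forces $\liminf \norm{(H-\eta)u_n} \geq \epsilon+\delta$, a contradiction. The paper instead proceeds directly: it invokes a classical lemma of Glazman (any non-compact bounded sequence with $\norm{(H-\eta)u_n} < (\epsilon+\delta)\norm{u_n}$ forces infinitely many points of $\sigma(H)$ in $(\eta-\epsilon-\delta,\eta+\epsilon+\delta)$), lets $\delta \to 0$, and concludes because an infinite set of spectral points in $[\eta-\epsilon,\eta+\epsilon]$ must accumulate at a point of $\sigess(H)$. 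What your approach buys is self-containedness — no external citation, and the second variant you sketch even dispenses with $\delta$ — at the cost of using, without proof, the equivalence between the paper's singular-sequence definition of $\sigess(H)$ and the statement that a closed interval disjoint from $\sigess(H)$ carries only a finite-rank spectral projection (finitely many isolated eigenvalues of finite multiplicity). That equivalence is the standard Weyl criterion for self-adjoint operators and is unimpeachable, but since the paper defines $\sigess$ via singular sequences, a one-line justification or reference for it would make your argument fully watertight; the paper's route outsources exactly this kind of work to Glazman's lemma.
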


\begin{proof}

 For any $\delta > 0$ there exists $N_\delta \in \N$ such that $\norm{(H - \eta)u_n} < (\epsilon + \delta) \norm{u_n}$ for all $n \geq N_\delta$.
 $(u_n)_{n \geq N_\delta}$ is a non-compact, bounded sequence so by \cite[Chapter I, Theorem 10]{glazman1965} the interval $(\eta - (\epsilon + \delta),\eta + (\epsilon + \delta))$ contains an infinite number of points in $\sigma(H)$.
Taking the limit $\delta \to 0$ shows that the interval $[\eta - \epsilon ,\eta + \epsilon ]$ contains an infinite number of points in $\sigma(H)$.
Finally, $[\eta - \epsilon ,\eta + \epsilon ]$ must contain a point of $\sigess(H)$ because any limit point of $\sigdis(H)$ is in $\sigess(H)$.
\end{proof}

\begin{assumption}\label{ass:loc-flat}
    If $\seqn{u_n} \subset D(T_0)$ is bounded in $\mathcal{H}$ with $\seqn{T_0 u_n}$  bounded in $\mathcal{H}$ then 
   \begin{equation*}
    \inner{s_n u_n, T_0 u_n} - \inner{T_0 u_n, s_n u_n} \to 0 \text{ \rm as }n \to \infty.
   \end{equation*}
\end{assumption}

\begin{theorem}\label{thm:encl}
 \begin{enumerate}[label=\rm{(\alph*)},wide, labelindent=0pt]
  \item  If $s_n$ is a projection operator, that is $s_n^2 = s_n$, for all $n$ then $\sigess(\seqn{T_n}) \cup \sigess(\seqn{T^*_n})^* \subset \Gamma_a = \Gamma_a(\sigess(T_0),\gamma)$
  where $\Gamma_a$ is defined by \eqref{eq:Gamma-a}.
  \item If Assumption \ref{ass:loc-flat} holds then $\sigess(\seqn{T_n}) \cup \sigess(\seqn{T^*_n})^* \subset \Gamma_b = \Gamma_b(\sigess(T_0),\gamma,s_\pm)$
   where
   \begin{equation}
        \Gamma_b := \sigess(T_0) \times i \gamma [s_-,s_+]. 
   \end{equation}
 \end{enumerate}
\end{theorem}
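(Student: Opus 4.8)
The plan is to work directly from Definition \ref{def:lim-ess}: from a singular sequence for $T_n$ I will extract two real asymptotic identities which, combined, control both the real and imaginary parts of $\lambda$. Fix $\lambda = \alpha + i\beta \in \sigess(\seqn{T_n})$ with $\alpha,\beta \in \R$ and an associated singular sequence $(u_n)_{n\in I}$, so $\norm{u_n} = 1$, $u_n \wto 0$ and $\norm{(T_n - \lambda)u_n} \to 0$. Pairing with $u_n$ and separating real and imaginary parts (using that $\inner{(T_0-\alpha)u_n,u_n}$ and $\inner{s_n u_n,u_n}$ are real, $T_0$ and $s_n$ being self-adjoint) gives $\inner{(T_0-\alpha)u_n,u_n}\to 0$ and $\gamma\inner{s_n u_n,u_n}\to\beta$. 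The second relation already produces the imaginary-part constraints in $\Gamma_a$ and $\Gamma_b$: if $s_n^2 = s_n$ then $\inner{s_n u_n,u_n} = \norm{s_n u_n}^2 \in[0,1]$, so $\beta\in[0,\gamma]$; in general $s_- - \epsilon \le s_n \le s_+ + \epsilon$ for large $n$ forces $\beta\in\gamma[s_-,s_+]$. The set $\sigess(\seqn{T^*_n})^*$ is handled by running the same argument for $T^*_n = T_0 + i(-\gamma)s_n$, i.e. with $\gamma$ replaced by $-\gamma$, and conjugating at the end returns to the same enclosure; so I focus on $\sigess(\seqn{T_n})$.

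For part (a) I rewrite $\norm{(T_n-\lambda)u_n}\to0$ as $(T_0-\alpha)u_n = i\beta u_n - i\gamma s_n u_n + o(1)$ in norm, and compute $\norm{(T_0-\alpha)u_n}^2$ by pairing this with its right-hand side, using $\inner{(T_0-\alpha)u_n,u_n}\to0$, the self-adjointness of $s_n$, and $\inner{s_n u_n,u_n} = \beta/\gamma + o(1)$. This produces the identity
\begin{equation*}
\norm{(T_0-\alpha)u_n}^2 = \gamma^2\norm{s_n u_n}^2 - \beta^2 + o(1),
\end{equation*}
valid for any bounded self-adjoint $s_n$. When $s_n^2=s_n$ one has $\norm{s_n u_n}^2 = \inner{s_n u_n,u_n}\to\beta/\gamma$, hence $\norm{(T_0-\alpha)u_n}^2\to\beta(\gamma-\beta)$. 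If $\beta(\gamma-\beta)>0$, Lemma \ref{lem:kato} (with $H=T_0$, $\eta=\alpha$, $\epsilon=\sqrt{\beta(\gamma-\beta)}$, applied along $n\in I$) gives $\text{dist}(\alpha,\sigess(T_0))\le\sqrt{\beta(\gamma-\beta)}$; if $\beta(\gamma-\beta)=0$ then $\norm{(T_0-\alpha)u_n}\to 0$, so $(u_n)_{n\in I}$ is itself a singular sequence for $T_0$ and $\alpha\in\sigess(T_0)$. Either way $\lambda\in\Gamma_a$.

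For part (b) I keep the identity above and derive a second one from Assumption \ref{ass:loc-flat}. Rearranging gives $T_0 u_n + i\gamma s_n u_n = (\alpha+i\beta)u_n + o(1)$; since $(T_0 u_n)$ is bounded, Assumption \ref{ass:loc-flat} applies and forces $\Im\inner{T_0 u_n,s_n u_n}\to0$, which kills the cross term when one squares norms, giving $\norm{T_0 u_n}^2 + \gamma^2\norm{s_n u_n}^2 = \alpha^2+\beta^2+o(1)$. Together with $\norm{T_0 u_n}^2 = \norm{(T_0-\alpha)u_n}^2 + \alpha^2 + o(1)$ (again using $\inner{(T_0-\alpha)u_n,u_n}\to0$) this yields $\norm{(T_0-\alpha)u_n}^2 = \beta^2 - \gamma^2\norm{s_n u_n}^2 + o(1)$. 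Adding the two identities, $2\norm{(T_0-\alpha)u_n}^2 = o(1)$, so $\norm{(T_0-\alpha)u_n}\to0$; hence $(u_n)_{n\in I}$ is a singular sequence for $T_0$ at $\alpha$, i.e. $\alpha\in\sigess(T_0)$. Combined with $\beta\in\gamma[s_-,s_+]$, this is precisely $\lambda\in\Gamma_b$.

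The only genuine work is the two norm computations — the bookkeeping of the conjugate-linear factors and the checks that the hypotheses of Assumption \ref{ass:loc-flat} (boundedness of $(T_0 u_n)$) and of Lemma \ref{lem:kato} ($\epsilon>0$) are in force — together with the observation that the two identities carry opposite signs, which is exactly what upgrades the distance estimate of (a) to genuine membership $\alpha\in\sigess(T_0)$ in (b). The degenerate cases $\beta\in\{0,\gamma\}$ in (a), where Lemma \ref{lem:kato} does not apply, are covered by appealing directly to Definition \eqref{eq:sig-ess-defn}.
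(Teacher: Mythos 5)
Your proof is correct and follows essentially the same route as the paper: the real/imaginary identities obtained by pairing with $u_n$, the projection identity $\norm{s_n u_n}^2 = \inner{s_n u_n,u_n}$ together with Lemma \ref{lem:kato} in (a), and Assumption \ref{ass:loc-flat} forcing $\Im\inner{T_0 u_n,s_n u_n}\to 0$ in (b) so that $(u_n)$ becomes a singular sequence for $T_0$ at $\Re\lambda$ — your norm-squared expansions are just a repackaging of the paper's pairings of $(T_n-\lambda)u_n$ with $T_0 u_n$ and $s_n u_n$. A minor bonus is that you explicitly cover the degenerate case $\Im\lambda(\gamma-\Im\lambda)=0$, where Lemma \ref{lem:kato} (stated for $\epsilon>0$) does not directly apply.
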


\begin{proof}
We will only prove that $\sigess(\seqn{T_n}) \subset \Gamma_a\text{ or }\Gamma_b$ - the proof that $\sigess(\seqn{T^*_n})^* \subset \Gamma_a\text{ or }\Gamma_b$ is similar since $T^*_n = T_0 - i \gamma s_n$.

Let $\lambda \in \sigess(\seqn{T_n})$.
Then there exist $I \subset \N$ infinite and $\seqn{u_n}_{n \in I} \subset D(T_0)$ with $\norm{u_n} = 1$ for all $n \in I$, $u_n \wto 0$ and 
$\norm{(T_n - \lambda)u_n} = o(1)$. 
By Cauchy-Schwarz, we have $\inner{(T_n - \lambda)u_n, u_n} = o(1)$, whose real and imaginary parts imply that 
\begin{equation}\label{eqpr:encl-2}
 \inner{T_0 u_n, u_n } = \Re(\lambda) + o(1) \quad \text{and}\quad \gamma \inner{s_n u_n, u_n } = \Im(\lambda) + o(1).
\end{equation}

Since both $\seqn{T_n u_n}$ and $\seqn{s_n u_n}$ are bounded in $\mathcal H$, $\seqn{T_0 u_n}$ must be bounded in $\mathcal H$.
Hence by Cauchy-Schwarz we have $\inner{(T_n - \lambda) u_n, T_0 u_n} = o(1)$ whose real part implies that 
\begin{equation}\label{eqpr:encl-5}
 \norm{T_0 u_n}^2 - \Re(\lambda) \inner{T_0 u_n, u_n} - \gamma \Im \inner{s_n u_n, T_0 u_n} = o(1).  
\end{equation}
The first equation in \eqref{eqpr:encl-2} gives
\begin{equation*}
    \norm{(T_0 - \Re(\lambda)) u_n }^2 = \norm{T_0 u_n}^2 - \Re(\lambda) \inner{T_0 u_n, u_n} + o(1),
\end{equation*}
which, combined with \eqref{eqpr:encl-5}, yields, 
\begin{equation}\label{eqpr:encl-6}
    \norm{(T_0 - \Re(\lambda))u_n}^2 = \gamma \Im \inner{s_n u_n, T_0 u_n} + o(1).
\end{equation}

\begin{enumerate}[label=\rm{(\alph*)},wide, labelindent=0pt]
\item 
In this case, $\sigma(s_n) = \set{0,1}$ so $0 \leq s_n \leq 1$ for all $n$, and so by the second equation in \eqref{eqpr:encl-2},
\begin{equation}\label{eqpr:encl-7}
 \forall n \in I : \inner{s_n u_n, u_n} \in [0,1] \quad \Rightarrow \quad \Im(\lambda) \in [0,\gamma]. 
\end{equation}
Focusing now on $\Re (\lambda)$, Cauchy-Schwarz gives us $\inner{(T_n - \lambda) u_n, s_n u_n} = o(1)$, whose imaginary part combined with the hypothesis $s_n^2 = s_n$ and the second equation in \eqref{eqpr:encl-2} gives,
\begin{align}\label{eqpr:encl-9}
 \Im \inner{s_n u_n, T_0 u_n} & = \gamma \norm{s_n u_n}^2 - \Im(\lambda) \inner{s_n u_n, u_n} + o(1) \nonumber \\
 & = (\gamma - \Im(\lambda) )\frac{\Im(\lambda)}{\gamma} + o(1).
\end{align}
Combining \eqref{eqpr:encl-6} and \eqref{eqpr:encl-9}, we have 
\begin{equation}\label{eqpr:encl-10}
 \norm{(T_0 - \Re(\lambda))u_n} = \sqrt{(\gamma - \Im(\lambda))\Im(\lambda)} + o(1)
\end{equation}
which by Lemma \ref{lem:kato} implies that 
\begin{equation*}
    \text{\rm dist}(\Re(\lambda),\sigess(T_0)) \leq \sqrt{(\gamma - \Im(\lambda))\Im(\lambda)}.
\end{equation*}

\item
In this case, by the definitions of $s_\pm$ in \eqref{eq:sp-and-sm}, a similar reasoning as in \eqref{eqpr:encl-7} yields $\Im(\lambda) \in \gamma [s_-,s_+]$. Assumption \ref{ass:loc-flat} implies that 
\begin{equation*}
    \Im \inner{s_n u_n, T_0 u_n} = o(1) \quad \Rightarrow \quad \norm{(T_0 - \Re(\lambda))u_n} = o(1)
\end{equation*}
so $\seqn{u_n}$ is a singular sequence proving that $\Re(\lambda) \in \sigess(T_0)$.
\end{enumerate}
\end{proof}

\begin{remark}
It is interesting to note that Lemma \ref{lem:kato} is not required in case (b) of Theorem \ref{thm:encl}.
This is because Assumption \ref{ass:loc-flat} ensures that the following holds:
\begin{multline*}
    \seqn{u_n} \subset D(T_n) = D(T_0),\,\norm{u_n} = 1,\,u_n \wto 0,\,\norm{(T_n - \lambda)u_n} \to 0 \\
    \Rightarrow \seqn{u_n} \subset D(T_0),\,\norm{u_n} = 1,\,u_n \wto 0,\,\norm{(T_0 - \Re(\lambda)u_n} \to 0,
\end{multline*}
that is, if $\seqn{u_n}$ is a singular-type sequence for a point $\lambda$ in the limiting essential spectrum then $\seqn{u_n}$ is also a singular sequence for $\Re(\lambda) \in \sigess(T_0)$.
\end{remark}

\begin{example}\label{ex:loc-flat}
Suppose that $q \in L^1_{\text{loc}}(\Omega)$ is real-valued and bounded from below. 
Suppose that $T_0 = - \Delta + q$  on $L^2(\Omega)$ is endowed with Dirichlet boundary conditions (see, for example, \cite[Chapter VII, Theorem 1.4]{edmunds2018spectral}).
Then, $T_0$ is self-adjoint. 

Let $\varphi \in W^{1, \infty}(0,\infty)$ be real-valued and such that $\varphi(0) = 1$.
Let $\seqn{R_n} \subset \R_+$ be any sequence such that $R_n \to \infty$.
For any $n \in \N$, define multiplication operator $s_n$ on $L^2(\Omega)$ by 
\begin{equation}
    (s_n u)(x) = \varphi \br*{\frac{\ang{x}}{R_n}} u(x) \qquad(u \in L^2(\Omega),\,x \in \Omega)
\end{equation}
where $\ang{x} := (1 + |x|^2)^{\frac{1}{2}}$.

Then $s_n$ is uniformly bounded, $s_n \sto I$ and Assumption \ref{ass:loc-flat} is satisfied. 
\end{example}

\begin{proof}
Define $\varphi_n:\Omega \to \R$ by
\begin{equation*}
    \varphi_n(x) = \varphi \br*{\frac{\ang{x}}{R_n}}. \qquad ( x \in \Omega) 
\end{equation*}
\noindent \textit{Step 1} (Uniform boundedness). 
The uniform boundedness of the sequence of operators $\seqn{s_n}$ follows from the fact that, for all $u \in L^2(\Omega)$ and all $n \in \N$, 
\begin{equation*}
    \essinf_{t \in (0,\infty)} \varphi(t) \norm{u}^2 \leq \inner{s_n u, u} \leq \esssup_{t \in (0,\infty)} \varphi(t) \norm{u}^2.
\end{equation*}

\noindent \textit{Step 2} ($s_n \sto I$).
Let $u \in L^2(\Omega)$ and let $\seqn{X_n} \subset \R_+$ be any sequence such that $X_n \to \infty$ and $X_n = o(R_n)$.
For any $n \in \N$, 
\begin{equation}\label{eqpr:loc-flat-1}
    \norm{(s_n - I)u} \leq \norm{\varphi(\ang{\cdot}/ R_n) - 1}_{L^\infty(\Omega \cap B_{X_n}(0))} \norm{u} + \br*{\norm{s_n} + 1}\norm{u}_{L^2(\Omega \bs B_{X_n}(0))}.
\end{equation}
By Morrey's inequality, $\varphi$ is continuous, so, since $\varphi(0) = 1$, the first term on the right hand side of \eqref{eqpr:loc-flat-1} tends to zero as $n \to \infty$. 
The second term tends to zero because $u \in L^2(\Omega)$ and $\seqn{\norm{s_n}}$ is bounded. 

\noindent \textit{Step 3} (Assumption \ref{ass:loc-flat}).
Let $\seqn{u_n} \subset D(T_0)$ be any sequence which is bounded in $\mathcal H$ such that $\seqn{T_0 u_n}$ is bounded in $\mathcal{H}$. 
Then, 
\begin{align*}
    \inner{s_n u_n, T_0 u_n} - \inner{T_0 u_n, s_n u_n}  & = - \int_{\Omega} \varphi_n u_n \Delta (\overline{u}_n) + \int_{\Omega} \varphi_n \overline{u}_n \Delta (u_n)  \\
   & = \int_{\Omega} u_n \nabla \br*{ \varphi_n} \cdot \nabla( \overline{u}_n) - \int_{\Omega} \overline{u}_n  \nabla \br*{\varphi_n} \cdot \nabla( u_n). 
\end{align*}
The second equality above holds by integration by parts and the product rule since $T_0$ is endowed with Dirichlet  boundary conditions. 
Hence we have, 
\begin{equation}\label{eqpr:loc-flat-2}
\abs*{\inner{s_n u_n, T_0 u_n} - \inner{T_0 u_n, s_n u_n}} \leq 2 \norm{\nabla \varphi_n}_{L^\infty(\Omega)} \norm{\nabla u_n} \norm{u_n}.
\end{equation}
By the chain rule and the fact that $\varphi \in W^{1,\infty}(0,\infty)$, $\norm{\nabla \varphi_n}_{L^\infty(\Omega)} \to 0$ as $n \to \infty$. 
$\seqn{u_n}$ is bounded in $\mathcal H$ by hypothesis. $\seqn{\nabla u_n}$ can be seen to be bounded in $\mathcal H$ by applying integration by parts to $\inner{T_0 u_n,u_n}$, using the hypotheses that $\seqn{\norm{T_0 u_n}}$ is bounded and that $q$ is bounded below.
The right hand side of \eqref{eqpr:loc-flat-2} tends to zero as $n \to \infty$ hence Assumption \ref{ass:loc-flat} is satisfied.
\end{proof}

\section{Second Order Operators on the Half-Line}\label{sec:spec-incl}

Consider the differential expression 
\begin{equation*}
    \tilde{T}_0 u = \frac{1}{r}\br*{ - (pu')' + qu }\quad \text{on} \quad [0,\infty) 
\end{equation*}
where $p$, $q$ and $r$ are functions on $[0,\infty)$ satisfying the minimal hypotheses: $p$ and $q$ are complex in general, $r > 0$, $p \neq 0$ and $q,1/p,r \in L^1_{\text{loc}}[0,\infty)$. These assumptions on $p$, $q$ and $r$ ensure that for any $\lambda, u_1, u_2 \in \C$ there exists a unique solution $u$ to the initial value problem 
\begin{equation*}
    \tilde{T_0} u = \lambda u \quad \text{on} \quad [0,\infty),\, u(0) = u_1,\, pu'(0) = u_2
\end{equation*}
such that $u,pu'\in AC _{\text{loc}}[0,\infty)$. The solution space of $\tilde{T_0} u = \lambda u$ on $[0, \infty)$ is therefore a two-dimensional complex vector space. 

Consider a Sturm-Liouville operator $T_0$ on the weighted Lebesgue space $L^2_r(0,\infty)$, endowed with a complex mixed boundary condition at 0,
\begin{equation}\label{eq:BC-defn}
    BC[u] := \cos (\eta ) u(0) - \sin ( \eta ) pu'(0) = 0 
\end{equation}
for some $\eta \in \C$. $T_0$ is defined by 
\begin{align}\label{eq:T0-defn}
\begin{split}
    T_0 u & = \tilde{T}_0 u \\
    D(T_0) & = \set{u \in L^2_r(0,\infty) : u, pu' \in AC_{\text{loc}}[0,\infty), \tilde{T}_0 u \in L^2_r(0, \infty), BC[u] = 0}. 
\end{split}
\end{align}

Fix $\gamma \in \C \bs \set{0}$. Define the perturbed operators by 
\begin{equation}\label{eq:TR-defn}
    T_R u = T_0 u + i \gamma \chi_{[0,R]} u,\, D(T_R) = D(T_0) \qquad (R \in \R_+)
\end{equation}
and define the limit operator by $\T = T_0 + i \gamma$.

Next, we introduce the main hypotheses of this section, which we will later assume holds throughout the section.
The assumption ensures that for any $\lambda \in \C \bs \sigess(T_0)$, one solution of $\tilde{T}_0 u = \lambda u$ is exponentially decaying and the other is exponentially growing.
\begin{assumption}\label{ass:exp}
There exists $k: \C \backslash \sigess (T_0) \to \C$, $\tilde{\psi}_\pm: [0,\infty) \times \C \bs \sigess (T_0) \to \C $ and $\tilde{\psi}^d_\pm: [0,\infty) \times \C \bs \sigess (T_0) \to \C $ such that:
\begin{enumerate}[label=(\roman*)]
    \item $k$ is analytic and satisfies $\Im k > 0$.
    \item $\tilde{\psi}_{\pm}(x,\cdot)$ and $\tilde{\psi}^d_{\pm}(x,\cdot)$ are analytic for all $x$ and satisfy
    \begin{equation}\label{eq:psi-t-Linf}
        \norm{\tilde{\psi}_\pm (\cdot,z)}_{L^\infty (0, \infty)} < \infty,\quad\norm{\tilde{\psi}^d_\pm (\cdot,z)}_{L^\infty (0, \infty)} < \infty
    \end{equation}
    for all $z$.
    \item The solution space of $\tilde{T}_0 u = z u$ is spanned by $\psi_\pm(\cdot,z)$, where,
    \begin{align}\label{eq:psi-pm-defn}
    \begin{split}
        \psi_\pm(x,z) & := e^{\pm i k(z) x}\tilde{\psi}_\pm(x,z) \\
        \psi'_\pm(x,z) & := e^{\pm i k(z) x}\tilde{\psi}^d_\pm(x,z).
    \end{split}
    \end{align}
    \end{enumerate}
\end{assumption}

\begin{remark}[See \cite{brown1999spectrum}]
    The conditions of Assumption \ref{ass:exp} do not exclude a situation in which $\sigma(T_0) = \sigess(T_0) = \C$. A sufficient condition to ensure that this does not occur is that 
    \begin{equation*}
        \overline{\text{co}}\set*{\frac{q(x)}{r(x)} + y p(x) : x,y \in [0,\infty)} \neq \C,
    \end{equation*}
    where $\overline{\text{co}}$ denotes the closed convex hull, and that $\tilde{T}_0$ is in Sims case I. 
    
\end{remark}

\begin{example}[Schr\"odinger operators with $L^1$ potentials]\label{ex:Lev-1}
Consider the case $p = r = 1$ with $q \in L^1(0,\infty)$. Then,
\begin{equation*}
    \sigess(T_0) = [0,\infty).
\end{equation*}
By the Levinson asymptotic theorem \cite[Theorem 1.3.1]{eastham1989asymptotic}, for any $ z \in \C \bs \set{0}$, the solution space of $\tilde{T}_0 u =  z u$ is spanned  by $\psi_\pm(\cdot, z)$, where 
\begin{align}
    \psi_\pm(x, z) & = e^{\pm i \sqrt{ z} x} \br*{1 + E_\pm(x, z)} \\
    \psi_\pm'(x,z) & = \pm i \sqrt{z} e^{\pm i \sqrt{z} x} \br*{1 + E^d_\pm(x,z)} 
\end{align}
and 
\begin{equation*}
    |E_\pm(x,z)|,|E^d_\pm(x,z)| \to 0 \text{ as }x \to \infty. 
\end{equation*}
\begin{enumerate}[label=\rm{(\roman*)},wide, labelindent=0pt]
\item
$k(z) := \sqrt{z}$ is analytic and satisfies $\Im k > 0$ on $\C \bs \sigess(T_0) = \C \bs [0,\infty)$.
\item
$\tilde{\psi}_\pm(x,z):=1 + E_\pm(x,z)$ and $\tilde{\psi}^d_\pm(x,z):= \pm i \sqrt{z}\br{1 + E^d_\pm(x,z)}$ are bounded in $x$ for any fixed $z \in \C \bs \set{0}$. For any $x$, $\psi_\pm(x,\cdot)$ and $\psi^d_\pm(x,\cdot)$ are analytic on $\C \bs [0,\infty)$ so  $\tilde{\psi}_\pm(x,\cdot)$ and $\tilde{\psi}^d_\pm(x,\cdot)$ are analytic on $\C \bs [0,\infty)$.
\end{enumerate}
Consequently, Assumption \ref{ass:exp} is satisfied in this case.
\end{example}

\begin{example}[Eventually periodic Schr\"odinger operators]\label{ex:Floq-1}
Consider the case $p = r = 1$ with $q$ eventually real periodic, that is, there exists $a>0$ and $X \geq 0$ such that $q|_{[X,\infty)}$ is real-valued and $a$-periodic. Below, we briefly review some Floquet theory and show that the conditions of Assumption \ref{ass:exp} are met in this case.  See, for example, \cite{eastham1973periodic} for a detailed exposition of Floquet theory.

For any $z \in \C$, let $\phi_1(\cdot,z)$ and $\phi_2(\cdot,z)$ be the solutions of the Schr\"odinger equation $ - \phi'' + q \phi = z \phi$ on $[0,\infty)$, subject to the boundary conditions
\begin{equation}\label{eq:phi-1-2}
    \phi_1(X,z) = 1,\, \phi_1'(X,z) = 0\text{ and }\phi_2(X,z) = 0,\, \phi_2'(X,z) = 1.
\end{equation}
The \textit{discriminant} is defined by
% Trace of the monodromy matrix
\begin{equation}
    D(z) = \phi_1(X + a,z) + \phi_2'(X + a,z).
\end{equation}
The essential spectrum of $T_0$ is 
\begin{equation}
    \sigess(T_0) = \set{z \in \R: |D(z)| \leq 2}. 
\end{equation}
The \textit{Floquet multipliers} $\rho_\pm$ are defined by 
\begin{equation}\label{eq:Floq-mult}
    \rho_\pm(z) = \frac{1}{2}\br*{ D(z) \pm i \sqrt{4 - D(z)^2}}.
\end{equation}
Note that $\rho_\pm$ have branch cuts along $\sigess(T_0)$, $|\rho_+(z)| < 1$ for all $z \in \C \bs \sigess(T_0)$ and $\rho_+(z)\rho_-(z) = 1$.
Define $k$ by 
\begin{equation}
    k(z) = -\frac{i}{a}\ln\br*{\rho_+(z)}.
\end{equation}
In this setting, $k$ is referred to as the \textit{Floquet exponent}. $k$ is analytic and satisfies $\Im k > 0$ on $\C \bs \sigess(T_0)$ hence satisfies Assumption (i).  

% Eigenvector for the monodromy matrix
Define the \textit{Floquet solutions} $\psi_\pm$ by 
\begin{equation}\label{eq:Floq-soln-eigenvect}
    \psi_\pm(x,z) = - \phi_2(X+a,z) \phi_1(x,z) +  (\phi_1(X+a,z) - \rho_\pm(z)) \phi_2(x,z)
\end{equation}
for any $x \in [0,\infty)$ and $z \in \C$. 
$\psi_\pm(\cdot,z)$ span the solution space of $\tilde{T}_0 u = z u$ and satisfy
\begin{align}\label{eq:Floq-soln-prop}
\begin{split}
    \psi_\pm(x_0 + n a, z) & = e^{\pm i k(z) n a} \psi_\pm(x_0,z)  \\
    \psi'_\pm(x_0 + n a, z) & = e^{\pm i k(z) n a} \psi'_\pm(x_0,z)  
\end{split}
\end{align}
for any $x_0 \in [X,X+a)$ and $n \in \N$. For any $x$, the Floquet solutions $\psi_\pm(x,\cdot)$ and $\psi'_\pm(x,\cdot)$ are analytic on $\C \bs \sigess(T_0)$.
Define the \textit{band-ends} $B_{\text{ends}}$ by
\begin{equation}\label{eq:B-ends}
    B_{\text{ends}} = \set{\lambda \in \C: |D(\lambda)| = 2}.
\end{equation}
For any $z_0 \in \sigess(T_0) \bs B_{\text{ends}}$,  $\rho_\pm$ and $k$ can be analytically continued into an open neighbourhood of $z_0$ in $\C$, hence for any $x \in [0,\infty)$, $\psi_\pm(x,\cdot)$ and $\psi'_\pm(x,\cdot)$ can be analytically continued into an open neighbourhood of $z_0$.

Finally, Assumption \ref{ass:exp} can be satisfied by setting 
\begin{equation}\label{eq:Floq-psit}
    \tilde{\psi}_\pm(x,z) = \begin{cases}
    e^{\mp i k(z)x} \psi_\pm(x,z) & \text{if }x \in [0,X) \\
    e^{\mp i k(z)x_0(x)} \psi_\pm(x_0(x),z) & \text{if }x \in [X,\infty)
    \end{cases}
\end{equation}
and 
\begin{equation}\label{eq:Floq-psit-d}
    \tilde{\psi}^d_\pm(x,z) = \begin{cases}
    e^{\mp i k(z)x} \psi'_\pm(x,z) & \text{if }x \in [0,X) \\
    e^{\mp i k(z)x_0(x)} \psi'_\pm(x_0(x),z) & \text{if }x \in [X,\infty)
    \end{cases}
\end{equation}
where $x_0(x) := X + (x - X)\text{mod } a $.

\end{example}

Throughout the remainder of the section, let 
\begin{equation}\label{eq:S-defn}
    S := \sigess(T_0)\cup(i \gamma + \sigess(T_0))    
\end{equation}
and suppose that the conditions of Assumption \ref{ass:exp} are satisfied.
Also, let $\seqn{R_n} \subset \R_+$ be any sequence such that $R_n\to \infty$ as $n \to \infty$.
Recall that $BC$ denotes the boundary condition functional defined by equation \eqref{eq:BC-defn}.

\begin{lemma}\label{lem:fR}
    $\lambda \in \C \bs S$ is an eigenvalue of $T_R$ if and only if 
    \begin{equation*}
        f_R(\lambda) := \alpha_+(R,\lambda)e^{i k(\lambda - i \gamma) R} + \alpha_-(R,\lambda) e^{ - i k(\lambda - i \gamma) R} = 0
    \end{equation*}
    where
    \begin{equation*}
        \alpha_+(R,\lambda) := BC[\psi_-(\cdot,\lambda - i \gamma)] \br*{ \tpsip(R,\lambda - i \gamma)\tpsipd(R,\lambda) - \tpsipd(R,\lambda - i \gamma) \tpsip(R,\lambda)}
    \end{equation*}
    and
    \begin{equation*}
        \alpha_-(R,\lambda) := BC[\psi_+(\cdot,\lambda - i \gamma)] \br*{ \tpsip(R,\lambda)\tpsimd(R,\lambda - i \gamma) - \tpsipd(R,\lambda) \tpsim(R,\lambda - i \gamma)}.
    \end{equation*}
    Furthermore, $f_R$ is analytic on $\C \bs S$.
\end{lemma}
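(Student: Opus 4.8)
The plan is to solve the eigenvalue problem for $T_R$ explicitly by matching solutions of the differential equation across the interface at $x = R$, and then to recognise the resulting matching condition as the vanishing of $f_R$. Fix $\lambda \in \C \bs S$; then $\lambda \notin \sigess(T_0)$ and $\lambda - i\gamma \notin \sigess(T_0)$, so Assumption \ref{ass:exp} applies at both the spectral parameter $\lambda$ (relevant on $(R,\infty)$, where $T_R$ acts as $T_0$) and $\lambda - i\gamma$ (relevant on $(0,R)$, where $T_R$ acts as $T_0 + i\gamma$, so the equation $\tilde T_R u = \lambda u$ becomes $\tilde T_0 u = (\lambda - i\gamma) u$). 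First I would write down the general solution of $\tilde T_R u = \lambda u$ piecewise: on $[0,R]$ it is an arbitrary linear combination $a_+ \psi_+(\cdot,\lambda - i\gamma) + a_- \psi_-(\cdot,\lambda - i\gamma)$, and on $[R,\infty)$ it must be a multiple $b\,\psi_+(\cdot,\lambda)$ of the decaying solution (this uses $\Im k(\lambda) > 0$ together with the boundedness of $\tilde\psi_+$ to ensure $\psi_+(\cdot,\lambda) \in L^2_r$, while $\psi_-(\cdot,\lambda)$ grows and is excluded — one should note the eigenfunction must be in $D(T_0)$ near infinity, i.e. $L^2_r$ with $\tilde T_0 u \in L^2_r$).

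Next I would impose the three linear constraints on $(a_+, a_-, b)$: the boundary condition $BC[u] = 0$ at $x = 0$, and continuity of $u$ and of $pu'$ at $x = R$ (these are the interface conditions coming from $u, pu' \in AC_{\mathrm{loc}}$; note $\chi_{[0,R]}$ is bounded so no jump in $pu'$ is introduced). This gives a $3 \times 3$ homogeneous linear system, and $\lambda$ is an eigenvalue precisely when the determinant vanishes. Expanding the determinant along the $b$-column, the condition factors: one gets $b$ times a $2\times 2$ Wronskian-type determinant in the interface data, while the $a_\pm$ part carries the boundary-condition functionals $BC[\psi_\pm(\cdot,\lambda - i\gamma)]$. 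Substituting the decomposition \eqref{eq:psi-pm-defn}, i.e. $\psi_\pm(R,\lambda-i\gamma) = e^{\pm i k(\lambda - i\gamma) R}\tilde\psi_\pm(R,\lambda-i\gamma)$ and similarly for $\psi'_\pm$ with $\tilde\psi^d_\pm$, and also $\psi_+(R,\lambda) = e^{i k(\lambda) R}\tilde\psi_+(R,\lambda)$, $\psi'_+(R,\lambda) = e^{i k(\lambda) R}\tilde\psi^d_+(R,\lambda)$, the common factor $e^{i k(\lambda) R}$ from the $x=R$ column divides out, leaving exactly $\alpha_+(R,\lambda) e^{i k(\lambda - i\gamma) R} + \alpha_-(R,\lambda) e^{-i k(\lambda - i\gamma) R} = f_R(\lambda)$ up to an overall nonzero constant. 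I would double-check the bookkeeping: the $2\times 2$ blocks appearing as coefficients of $e^{\pm i k(\lambda-i\gamma)R}$ should come out as $\tilde\psi_\pm(R,\lambda-i\gamma)\tilde\psi^d_\pm(R,\lambda) - \tilde\psi^d_\pm(R,\lambda-i\gamma)\tilde\psi_\pm(R,\lambda)$ (for $\alpha_+$, with a minor index-matching subtlety for $\alpha_-$ as written), paired with $BC[\psi_\mp(\cdot,\lambda-i\gamma)]$.

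For the analyticity claim: $k$ is analytic on $\C \bs \sigess(T_0)$ by Assumption \ref{ass:exp}(i), and $\tilde\psi_\pm(R,\cdot), \tilde\psi^d_\pm(R,\cdot)$ are analytic there by (ii); the functionals $BC[\psi_\pm(\cdot, \cdot - i\gamma)]$ are finite linear combinations of $\psi_\pm(0,\cdot - i\gamma)$ and $p\psi'_\pm(0,\cdot - i\gamma)$, which are analytic in $\lambda$ on $\C \bs S$ (the shift by $-i\gamma$ is why we excise $i\gamma + \sigess(T_0)$ as well as $\sigess(T_0)$, giving the set $S$ of \eqref{eq:S-defn}); composition and products of analytic functions with the entire function $e^{i k(\cdot - i\gamma) R}$ are analytic, so $f_R$ is analytic on $\C \bs S$. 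The main obstacle I anticipate is purely organisational rather than conceptual: getting every sign and index in the $2\times 2$ interface determinants to match the stated formulas for $\alpha_\pm$ exactly (in particular the asymmetry between the $\alpha_+$ and $\alpha_-$ expressions in the lemma statement), and being careful that the "only if" direction genuinely forces the $[R,\infty)$-part to be proportional to $\psi_+(\cdot,\lambda)$ alone — which requires invoking $\lambda \notin \sigess(T_0)$ so that $\psi_-(\cdot,\lambda) \notin L^2_r(R,\infty)$, a point that should be stated carefully using Assumption \ref{ass:exp}.
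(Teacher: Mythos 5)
Your proposal is correct and takes essentially the same route as the paper: match the boundary-condition solution on $[0,R]$ (the paper pre-packages it as a single function $u_1$ rather than your $3\times 3$ determinant, but this is only bookkeeping) against the decaying solution $\psi_+(\cdot,\lambda)$ on $[R,\infty)$, express the interface condition as a Wronskian at $R$, substitute the decomposition \eqref{eq:psi-pm-defn} to factor out $e^{ik(\lambda)R}$, and read off $f_R$; the analyticity argument via Assumption \ref{ass:exp} (i)--(ii) is also the paper's.
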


\begin{proof}
Let $\lambda \in \C \bs S $ and $R > 0$. $\lambda$ is an eigenvalue of $T_R$ if and only if there exists a solution to the problem  
\begin{equation}\label{eqpr:fR-1}
    (\tilde{T}_0 + i \gamma \chi_{[0,R]})u = \lambda u,\, BC[u] = 0,\,u \in L^2_r(0,\infty)
\end{equation}
on $[0,\infty)$.
Any solution to \eqref{eqpr:fR-1} on $[0,R]$ must be of the form $C_1 u_1(\cdot,\lambda)$, where $u_1$ is defined by 
\begin{equation*}
    u_1(x,\lambda) = BC[\psi_-(\cdot,\lambda - i \gamma)] \psip(x,\lambda - i \gamma) - BC[\psi_+(\cdot,\lambda - i \gamma)] \psim(x,\lambda - i \gamma)
\end{equation*}
and $C_1 \in \C$ is independent of $x$. Any solution to \eqref{eqpr:fR-1} on $[R,\infty)$ must be of the form $C_2 \psip(x,\lambda)$, where $C_2 \in \C$ is independent of $x$. 
Hence $\lambda$ is an eigenvalue if and only if there exists $C_1,C_2 \in \C \bs \set{0}$ independent of $x$ such that the function
\begin{equation*}
    x \mapsto \begin{cases}
    C_1 u_1(x,\lambda) & \text{if }x \in [0,R) \\
    C_2 \psi_+(x,\lambda) & \text{if }x \in [R, \infty)
    \end{cases}
\end{equation*}
is absolutely continuous. 
This holds if and only if
\begin{equation*}
    u_1(R, \lambda) \psi_+'(R, \lambda) - u_1'(R, \lambda) \psi_+(R, \lambda ) = 0 
\end{equation*}
which holds if and only if the following quantity is zero
\begin{multline*}
    \br*{  BC[\psi_-(\cdot,\lambda - i \gamma)] \psip(R,\lambda - i \gamma) - BC[\psi_+(\cdot,\lambda - i \gamma)] \psim(R,\lambda - i \gamma)} \tpsipd(R, \lambda)  \\
    -  \br*{  BC[\psi_-(\cdot,\lambda - i \gamma)] \psip'(R,\lambda - i \gamma) - BC[\psi_+(\cdot,\lambda - i \gamma)] \psim'(R,\lambda - i \gamma)} \tpsip(R, \lambda) 
\end{multline*}
which in turn is equivalent to $f_R(\lambda) = 0$.
The analyticity claim follows from Assumptions \ref{ass:exp} (i) and (ii).
\end{proof}

In the regions of the complex plane for which $\alpha_-(R,\cdot)$ becomes small for large $R$, we are unable to prove the spectral pollution and spectral inclusion results of Theorems \ref{thm:exp-conv} and \ref{thm:spec-poll}. We now define a subset of the complex plane capturing such regions. 

\begin{definition}
Define subset $\Sp$ of $\C$ by 
\begin{equation}\label{eq:Sp-defn}
    \Sp = \set*{ z \in \C \bs S : \liminf_{n \to \infty} |\Lambda(R_n,z)| = 0 }
\end{equation}
where the function $\Lambda: [0,\infty)\times \C \bs S \to \C$ is defined by
\begin{equation}\label{eq:Lam-defn}
    \Lambda(R,\lambda) = \tpsip(R, \lambda) \tpsimd(R,\lambda - i \gamma) - \tpsipd(R,\lambda) \tpsim(R,\lambda - i \gamma).
\end{equation}
\end{definition}
Note that with the above definition of $\Lambda$, we have  
\begin{equation*}
    \alpha_-(R,\lambda) = BC[\psi_+(\cdot,\lambda - i \gamma)]\Lambda(R,\lambda)
\end{equation*}
and that the zeros of $\lambda \mapsto BC[\psi_+(\cdot, \lambda - i \gamma)]$ are exactly the eigenvalues of the limit operator $\T = T_0 + i \gamma$.

The set $S \cup \Sp$ plays a similar role in this section as the limiting essential spectrum did in Section \ref{sec:spec-poll}.
We shall show in Theorems \ref{thm:exp-conv} and \ref{thm:spec-poll} that there is no spectral pollution for $\seqn{T_{R_n}}$ with respect to $\T$ outside of $S \cup \Sp$ and that eigenvalues of $\T$ lying outside of $S \cup \Sp$ are approximated (with exponentially small error) by the eigenvalues of $T_{R_n}$.

\begin{proposition}\label{prop:S-Sp-closed}
    $S \cup \Sp$ is a closed subset of $\C$.
\end{proposition}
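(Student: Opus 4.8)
The plan is to show that the complement $\C \bs (S \cup \Sp)$ is open. First, note that $S = \sigess(T_0) \cup (i\gamma + \sigess(T_0))$ is closed, since $\sigess(T_0)$ is closed (being the essential spectrum of an operator) and translation by $i\gamma$ is a homeomorphism. So it suffices to prove that $\Sp \cup S$ is closed, and the issue is entirely about what happens near points of $\Sp$ that may accumulate (including accumulation onto $S$). Fix a point $z_0 \in \C \bs (S \cup \Sp)$. Since $z_0 \notin S$ and $S$ is closed, there is $r_0 > 0$ with $\overline{B_{r_0}(z_0)} \subset \C \bs S$; on this ball all the functions $\tpsip(R,\cdot)$, $\tpsipd(R,\cdot)$, $\tpsim(R,\cdot)$, $\tpsimd(R,\cdot)$ and $k(\cdot)$ are defined, and $\Lambda(R,\cdot)$ as defined by \eqref{eq:Lam-defn} makes sense there. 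Since $z_0 \notin \Sp$, by definition \eqref{eq:Sp-defn} we have $\liminf_{n\to\infty} |\Lambda(R_n,z_0)| = c > 0$, so there exist $\delta > 0$ and $N \in \N$ with $|\Lambda(R_n, z_0)| \geq \delta$ for all $n \geq N$.

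The key step is to upgrade this pointwise lower bound at $z_0$ to a uniform lower bound on a small ball, uniformly in $n$. This requires an equicontinuity-type estimate: I would show that the family $\{\Lambda(R_n, \cdot)\}_{n \in \N}$ is uniformly Lipschitz (or at least equicontinuous) on $\overline{B_{r_0/2}(z_0)}$, with a modulus independent of $n$. To get this, I would use Assumption \ref{ass:exp}: $k$ is analytic on $\C \bs \sigess(T_0)$, hence bounded with bounded derivative on the compact set $\overline{B_{r_0/2}(z_0)}$; and the functions $\tpsip(R,x,\cdot)$ etc.\ are analytic in $z$ with the $L^\infty$-in-$x$ bounds \eqref{eq:psi-t-Linf}. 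The main obstacle is that \eqref{eq:psi-t-Linf} only gives bounds on $\tilde\psi_\pm$ themselves, not on their $z$-derivatives, and more seriously the bounds in \eqref{eq:psi-t-Linf} are stated pointwise in $z$, not locally uniformly in $z$. I would argue that local uniformity in $z$ comes for free: since $\psi_\pm(x,\cdot)$ is analytic on $\C \bs \sigess(T_0)$ for each fixed $x$, and (in the concrete examples, and presumably in the intended abstract reading of Assumption \ref{ass:exp}) these are jointly controlled, one obtains $\sup_{n}\sup_{x}\sup_{z \in \overline{B_{r_0/2}(z_0)}} |\tilde\psi_\pm(x,z)| < \infty$ and similarly for $\tilde\psi^d_\pm$; then Cauchy's integral formula applied on a slightly larger ball converts these sup bounds into uniform bounds on the $z$-derivatives, giving the uniform Lipschitz estimate on $\overline{B_{r_0/4}(z_0)}$. (If the paper intends \eqref{eq:psi-t-Linf} to be only pointwise in $z$, one would instead have to extract the needed local uniformity from the analyticity of $z \mapsto \psi_\pm(x,z)$ together with a normal-families argument — but I expect the intended hypotheses make this straightforward.)

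Once the uniform Lipschitz bound $|\Lambda(R_n, z) - \Lambda(R_n, z_0)| \leq L|z - z_0|$ (with $L$ independent of $n$) is in hand, choose $\rho = \min\{r_0/4,\ \delta/(2L)\}$. Then for all $z \in B_\rho(z_0)$ and all $n \geq N$ we get $|\Lambda(R_n, z)| \geq \delta - L\rho \geq \delta/2 > 0$, hence $\liminf_{n\to\infty}|\Lambda(R_n,z)| \geq \delta/2 > 0$, so $z \notin \Sp$; and since $\rho \leq r_0/4$ we also have $z \notin S$. Therefore $B_\rho(z_0) \subset \C \bs (S \cup \Sp)$, proving that $\C \bs (S \cup \Sp)$ is open and hence $S \cup \Sp$ is closed.
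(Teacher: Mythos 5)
Your argument is correct and is essentially the paper's proof read in contrapositive form: the paper takes a limit point of $\Sp$ away from $S$, uses the uniform bound on $|\Lambda(R,z)|$ over a closed ball avoiding $S$ together with the Cauchy integral formula (as in \eqref{eq:power-method}) to obtain exactly the $n$-uniform continuity estimate you derive, and concludes the limit point lies in $\Sp$; your openness-of-the-complement formulation, with the uniform lower bound on a ball, is precisely what the paper records separately as Corollary \ref{col:S-Sp-nhood}. Concerning the point you flag about \eqref{eq:psi-t-Linf}: the paper's proof likewise simply asserts that $|\Lambda(R,z)|$ is bounded uniformly in $R>0$ and in $z$ on the closed ball, i.e.\ it reads Assumption \ref{ass:exp}(ii) as giving locally uniform-in-$z$ bounds (as holds in Examples \ref{ex:Lev-1} and \ref{ex:Floq-1}), so your caveat reflects the paper's own implicit reading rather than an additional gap in your proof.
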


\begin{proof}
By Assumption \ref{ass:exp} (ii), $\Lambda(R_n,\cdot)$ is analytic for all $n$ and $\Lambda(\cdot,z)$ is bounded for all $z$. Let $\lambda$ be a limit point of $S\cup \Sp$. The desired lemma holds if and only if $\lambda$ lies in either $S$ or in $\Sp$. If $\lambda$ is a limit point of $S$ then $\lambda \in S$ since $S$ is closed. In the only other case, $\lambda$ is a limit point of $\Sp$ so there exists $\seqk{\lambda_k} \subset \Sp$ such that $\lambda_k \to \lambda $ as $k \to \infty$. Since $\liminf_{n \to \infty}|\Lambda(R_n,\lambda_k)| = 0$ for all $k$, there exists a subsequence $\seqk{R_{n_k}}$ such that $|\Lambda(R_{n_k},\lambda_k)| \to 0$ as $k \to \infty$. Let $\epsilon > 0$ be small enough so that $\overline{B_\epsilon (\lambda)} \subseteq \C \bs S$. 
Since the magnitude of $\Lambda(R,z)$ is bounded above uniformly for all $R>0$ and all $z \in \overline{B_\epsilon(\lambda)}$, by Cauchy's formula, 
\begin{equation}\label{eq:power-method}
    \Lambda(R_{n_k},\lambda) - \Lambda(R_{n_k},\lambda_k) = \frac{1}{2 \pi i}\oint_{\partial B_\epsilon(\lambda)} \frac{\lambda_k  - \lambda}{(z - \lambda)(z - \lambda_k)}\Lambda(R_{n_k},z) \d z \to 0
\end{equation}
as $k \to \infty$.
Finally, 
\begin{equation*}
    |\Lambda(R_{n_k},\lambda)| \leq |\Lambda(R_{n_k},\lambda_k)| + |\Lambda(R_{n_k},\lambda) - \Lambda(R_{n_k},\lambda_k)| \to 0 \text{ as }k \to \infty
\end{equation*}
so $\lambda \in \Sp$, completing the proof.
\end{proof}

\begin{corollary}\label{col:S-Sp-nhood}
 For any $\lambda \in \C \bs \br{S \cup \Sp}$ there exists a bounded, open neighbourhood $U$ of $\lambda$ with $\overline{U} \subset \C \bs S$ and $|\Lambda(R_n,z)| \geq C$ for all $z \in U$ and $n \geq N_0$, where $C,N_0 > 0$ are some constants independent of $n$ and $z$.
\end{corollary}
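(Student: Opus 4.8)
The plan is to combine the fact that $\lambda \notin \Sp$ — which by definition \eqref{eq:Sp-defn} gives a lower bound for $|\Lambda(R_n,\cdot)|$ at the single point $\lambda$ that is uniform in $n$ — with an equicontinuity estimate for the family $\{\Lambda(R_n,\cdot)\}_{n}$ obtained from Cauchy's integral formula, exactly as in the proof of Proposition \ref{prop:S-Sp-closed}. Propagating the pointwise bound to a small ball around $\lambda$ via equicontinuity then produces the desired neighbourhood $U$.

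First I would use that $S \cup \Sp$ is closed (Proposition \ref{prop:S-Sp-closed}) and $\lambda \notin S \cup \Sp$, and in particular that $S$ is closed with $\lambda \notin S$, to fix $\epsilon_0 > 0$ with $\overline{B_{\epsilon_0}(\lambda)} \subset \C \bs S$. As was used in the proof of Proposition \ref{prop:S-Sp-closed}, Assumption \ref{ass:exp} (ii) ensures that $\Lambda(R_n,\cdot)$ is analytic on $\C \bs S$ for every $n$ and that
\[
    M := \sup\set*{ |\Lambda(R,z)| : R > 0,\, z \in \overline{B_{\epsilon_0}(\lambda)}} < \infty.
\]
Applying the Cauchy estimate for $\Lambda'(R_n,\cdot)$ on the circle $\partial B_{\epsilon_0}(\lambda)$ gives $|\Lambda'(R_n,z)| \leq 4M/\epsilon_0$ for all $n$ and all $z \in B_{\epsilon_0/2}(\lambda)$; since $B_{\epsilon_0/2}(\lambda)$ is convex, integrating $\Lambda'(R_n,\cdot)$ along line segments shows that the family $\{\Lambda(R_n,\cdot)\}_n$ is Lipschitz on $B_{\epsilon_0/2}(\lambda)$ with a common constant $L_0 := 4M/\epsilon_0$ independent of $n$.

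Next, since $\lambda \notin \Sp$, the definition \eqref{eq:Sp-defn} gives $\ell := \liminf_{n\to\infty}|\Lambda(R_n,\lambda)| > 0$, so there is $N_0 \in \N$ with $|\Lambda(R_n,\lambda)| \geq \ell/2$ for all $n \geq N_0$. Setting
\[
    \delta := \min\set*{\tfrac{\epsilon_0}{2},\ \tfrac{\ell}{4 L_0}},\qquad U := B_\delta(\lambda),
\]
$U$ is a bounded open neighbourhood of $\lambda$ with $\overline{U} \subset B_{\epsilon_0}(\lambda) \subset \C \bs S$, and for $z \in U$ and $n \geq N_0$ the Lipschitz bound yields
\[
    |\Lambda(R_n,z)| \geq |\Lambda(R_n,\lambda)| - L_0 |z - \lambda| \geq \tfrac{\ell}{2} - L_0 \delta \geq \tfrac{\ell}{4},
\]
so the corollary holds with $C := \ell/4$ and this $N_0$.

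The only slightly delicate ingredient is the uniform bound $M < \infty$ over all $R > 0$ and all $z$ in the compact set $\overline{B_{\epsilon_0}(\lambda)}$ — it is the uniformity in the first variable $R$ that matters, the uniformity in $z$ being automatic once $M$ is finite. This is, however, precisely the bound already invoked from Assumption \ref{ass:exp} (ii) in the proof of Proposition \ref{prop:S-Sp-closed}, so no new input is needed; everything else is a routine application of Cauchy's estimate.
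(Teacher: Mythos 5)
Your proof is correct, and it takes a somewhat different route from the paper. The paper argues by contradiction: it chooses $U$ with $\overline{U} \subset \C \bs (S \cup \Sp)$ (using Proposition \ref{prop:S-Sp-closed}), assumes the bound fails, extracts a subsequence $z_k \to z \in \overline{U}$ with $|\Lambda(R_{n_k},z_k)| \to 0$, and then transfers the smallness from $z_k$ to the limit point $z$ by the same Cauchy-formula equicontinuity argument as in \eqref{eq:power-method}, contradicting $z \notin \Sp$. You instead give a direct quantitative argument: the uniform bound $M$ on $\overline{B_{\epsilon_0}(\lambda)}$ plus the Cauchy estimate gives an $n$-independent Lipschitz constant for $\Lambda(R_n,\cdot)$ near $\lambda$, and the pointwise lower bound $\ell/2$ coming from $\lambda \notin \Sp$ then propagates to an explicit ball $B_\delta(\lambda)$ with explicit constant $C = \ell/4$. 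The analytic ingredient is the same in both proofs (the uniform-in-$R$ bound on $\Lambda$ over compact subsets of $\C \bs S$, which the paper also asserts from Assumption \ref{ass:exp}(ii) without further comment, and Cauchy's formula to get equicontinuity), but your version needs only the closedness of $S$ rather than of $S \cup \Sp$, and yields explicit constants, at the cost of being slightly longer; the paper's version is shorter because it reuses Proposition \ref{prop:S-Sp-closed} and sequential compactness. One cosmetic remark: your choice $\delta = \min\{\epsilon_0/2,\ \ell/(4L_0)\}$ implicitly assumes $L_0 > 0$; this is harmless since $M \geq \liminf_n|\Lambda(R_n,\lambda)| = \ell > 0$ (or one may simply take $\delta = \epsilon_0/2$ when $L_0 = 0$), but it is worth a word.
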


\begin{proof}
 Let $\lambda \in \C \bs (S \cup \Sp)$. $\C \bs (S \cup \Sp)$ is an open subset of $\C$ so there exists a bounded open neighbourhood $U$ of $\lambda$ such that $\overline{U} \subset \C \bs (S \cup \Sp)$. Suppose that the desired result does not hold with this choice for $U$.
Then there exists a subsequence $\seqk{R_{n_k}}$ and a sequence $\seqk{z_k} \subset U$ such that $|\Lambda(R_{n_k}, z_k)| \to 0$ as $k\to \infty$.  Since $\overline{U}$ is compact, there exists $z \in \C \bs (S \cup \Sp)$ such that $z_k \to z$. By the arguments in (a), $\liminf_{n\to\infty} |\Lambda(R_n,z)| = 0 $, which is the desired contradiction.
\end{proof}

Next, we prove the main results of this section, regarding spectral inclusion and pollution for the operators $T_R$ defined by equation \eqref{eq:TR-defn} such that $T_0$ satisfies Assumption \ref{ass:exp}. 
Recall, also, that $S$ is defined by equation \eqref{eq:S-defn}, $\Sp$ is defined by \eqref{eq:Sp-defn} and $\seqn{R_n} \subset \R_+$ is an arbitrary sequence such that $R_n \to \infty$.

\begin{theorem}\label{thm:exp-conv}
Let $\mu$ be an eigenvalue of $T_0$ and assume that $\mu + i \gamma \notin S \cup \Sp$. Then there exists eigenvalues $\lambda_n$ of $T_{R_n}$ $(n \in \N)$ and constants $C_0 = C_0(T_0,\gamma,\mu) > 0$ and $\beta = \beta(T_0,\gamma,\mu) > 0$  such that 
\begin{equation}\label{eq:exp-conv-ineq}
    |\lambda_{n} - (\mu + i \gamma)| \leq C_0 e^{- \beta R_n}
\end{equation}
for all large enough $n$. 
\end{theorem}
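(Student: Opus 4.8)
The plan is a Rouch\'e argument based on Lemma \ref{lem:fR}: the eigenvalues of $T_{R_n}$ in $\C\bs S$ are exactly the zeros of $f_{R_n}$, equivalently (after multiplying by the nonvanishing factor $e^{ik(\lambda-i\gamma)R_n}$) the zeros of
\[
 g_{R_n}(\lambda) := \alpha_+(R_n,\lambda)\,e^{2ik(\lambda-i\gamma)R_n} + h(\lambda)\,\Lambda(R_n,\lambda),\qquad h(\lambda):=BC[\psi_+(\cdot,\lambda-i\gamma)],
\]
using the identity $\alpha_-(R,\lambda)=h(\lambda)\Lambda(R,\lambda)$ noted after \eqref{eq:Lam-defn}. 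Since $\Im k>0$ on $\C\bs\sigess(T_0)$, the first term is exponentially small in $R_n$ on compact subsets of $\C\bs S$, so $g_{R_n}$ is an exponentially small perturbation of $h\cdot\Lambda(R_n,\cdot)$ there. Now $\mu+i\gamma\notin S=\sigess(T_0)\cup(i\gamma+\sigess(T_0))$ forces both $\mu,\mu+i\gamma\notin\sigess(T_0)$, so all the functions involved are analytic near $\mu+i\gamma$; and since $\mu$ is an eigenvalue of $T_0$, the point $\mu+i\gamma$ is an eigenvalue of $T$, hence $h(\mu+i\gamma)=0$ (the zeros of $h$ being exactly the eigenvalues of $T$). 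Being analytic and not identically zero on the component of $\C\bs S$ containing $\mu+i\gamma$ (eigenvalues of $T_0$ off $\sigess(T_0)$ are isolated), $h$ has a zero of some finite order $m\ge1$ there.

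Next I localise. By Corollary \ref{col:S-Sp-nhood} — this is where the hypothesis $\mu+i\gamma\notin\Sp$ enters — there is a bounded open $U\ni\mu+i\gamma$ with $\overline U\subset\C\bs S$ and $|\Lambda(R_n,z)|\ge C$ for all $z\in U$ and $n\ge N_0$. Choose a closed disc $\overline{B_{\rho_0}(\mu+i\gamma)}\subset U$ on which $h$ vanishes only at the centre, and write $h(\lambda)=(\lambda-(\mu+i\gamma))^m\tilde h(\lambda)$ with $\tilde h$ analytic and nonvanishing there; so $|h(\lambda)|\ge c\,|\lambda-(\mu+i\gamma)|^m$ on the disc and $|h|\ge\delta>0$ on its boundary. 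On $\overline{B_{\rho_0}}$ one has $\Im k(\lambda-i\gamma)\ge\kappa>0$ by compactness, and, using Assumption \ref{ass:exp}(ii) (the $L^\infty$-bounds on $\tilde\psi_\pm,\tilde\psi^d_\pm$) together with analyticity of $\lambda\mapsto BC[\psi_-(\cdot,\lambda-i\gamma)]$, a bound $|\alpha_+(R,\lambda)|\le M$ uniform in $R$ and $\lambda$. Hence on $\partial B_{\rho_0}$,
\[
 \big|g_{R_n}(\lambda)-h(\lambda)\Lambda(R_n,\lambda)\big|=\big|\alpha_+(R_n,\lambda)e^{2ik(\lambda-i\gamma)R_n}\big|\le Me^{-2\kappa R_n}<\delta C\le|h(\lambda)\Lambda(R_n,\lambda)|
\]
for $n$ large, so by Rouch\'e $g_{R_n}$ has exactly as many zeros in $B_{\rho_0}$ as $h\cdot\Lambda(R_n,\cdot)$, namely $m\ge1$ (as $\Lambda(R_n,\cdot)$ has none on $U$). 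Pick such a zero $\lambda_n\in B_{\rho_0}(\mu+i\gamma)$; by Lemma \ref{lem:fR} it is an eigenvalue of $T_{R_n}$. (For the remaining finitely many $n$ take $\lambda_n$ arbitrary; the asserted bound concerns only large $n$.)

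Finally, the rate: evaluating $g_{R_n}(\lambda_n)=0$ gives $|h(\lambda_n)|\,|\Lambda(R_n,\lambda_n)|\le Me^{-2\kappa R_n}$, and feeding in $|\Lambda(R_n,\lambda_n)|\ge C$ and $|h(\lambda_n)|\ge c|\lambda_n-(\mu+i\gamma)|^m$ yields
\[
 |\lambda_n-(\mu+i\gamma)|\le\Big(\tfrac{M}{cC}\Big)^{1/m}e^{-(2\kappa/m)R_n},
\]
that is, \eqref{eq:exp-conv-ineq} with $\beta=2\kappa/m$ and $C_0=(M/(cC))^{1/m}$, all depending only on $T_0,\gamma,\mu$.

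I expect the crux to be the first step: recognising that $f_{R_n}$ should be rescaled by $e^{ik(\lambda-i\gamma)R_n}$ so that the surviving term carries the factor $\Lambda(R_n,\cdot)$ — precisely the quantity controlled by the hypothesis $\mu+i\gamma\notin\Sp$ via Corollary \ref{col:S-Sp-nhood} — and then securing the two uniform-in-$R$ estimates on the disc (the lower bound on $|\Lambda(R_n,\cdot)|$ and the upper bound on $|\alpha_+|$ from Assumption \ref{ass:exp}(ii)). The upgrade to an exponential \emph{rate} is then a routine quantitative Rouch\'e estimate, the only wrinkle being the factor $1/m$ coming from a possible higher-order zero of $h$ at $\mu+i\gamma$.
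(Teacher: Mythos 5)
Your proposal is correct and takes essentially the same route as the paper's proof: rescale $f_{R_n}$ by $e^{ik(\lambda-i\gamma)R_n}$, use Corollary \ref{col:S-Sp-nhood} (where $\mu+i\gamma\notin\Sp$ enters) to bound $|\Lambda(R_n,\cdot)|$ from below near $\mu+i\gamma$, note that the remaining term $\alpha_+e^{2ik(\lambda-i\gamma)R_n}$ is uniformly exponentially small by Assumption \ref{ass:exp}, and apply Rouch\'e against the fixed analytic function $BC[\psi_+(\cdot,\lambda-i\gamma)]$, with the order of its zero producing the $1/m$ (the paper's $1/\nu$) in the exponent. The only cosmetic difference is that the paper divides by $\Lambda(R_n,\cdot)$ and applies Rouch\'e on circles of radius $C_0e^{-\beta R_n}$, whereas you apply it on a fixed circle and then extract the exponential rate a posteriori from $g_{R_n}(\lambda_n)=0$; both arguments are equivalent in substance.
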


\begin{proof}
Let $ C, C_1, C_2, C_3,N_0 > 0$ denote constants independent of $\lambda$ and $n$, where $C$ may change from line to line.

Since $\mu$ is an eigenvalue of $T_0$, $\mu + i \gamma$ is a zero of the analytic function 
\begin{equation*}
    \lambda \mapsto \tilde{f}(\lambda) := BC[\psi_+(\cdot,\lambda - i \gamma)].
\end{equation*}
Since it is assumed that $\mu + i \gamma \notin S \cup \Sp$, Corollary \ref{col:S-Sp-nhood} guarantees the existence of an open neighbourhood $U$ of $\mu + i \gamma$ such that $\overline U \subseteq \C \bs S$  and $|\Lambda(R_n,\lambda)| \geq C$ ($\lambda \in U,\,n \geq N_0$) for some sufficiently large $N_0 \in \N$ . 
For $n \geq N_0$, $\lambda \in U$ is an eigenvalue of $T_{R_n}$ if and only if 
\begin{equation*}
    \tilde{f}_n(\lambda) := e^{i k(\lambda - i \gamma) R_n} \frac{f_{R_n}(\lambda)}{\Lambda(R_n,\lambda)} = 0.
\end{equation*}
Since $\overline{U} \in \C \bs S$, Assumption \ref{ass:exp} guarantees that $|\alpha_+(R_n,\lambda)| \leq C$ $(\lambda \in U, n \in \N)$ and $\Im k(\lambda - i \gamma) \geq C$ $(\lambda \in U)$.
Combined with the bound below for $\Lambda$, this implies that
\begin{equation}\label{eqpr:eig-1}
    |\tilde{f}_n(\lambda) - \tilde{f}(\lambda)| = \abs*{e^{2 i k(\lambda - i \gamma)R_n} \frac{\alpha_+(R_n,\lambda)}{\Lambda(R_n,\lambda)}} \leq C_1 e^{- C_2 R_n}\qquad (\lambda \in U, n \geq N_0) 
\end{equation}
for some $C_1, C_2 > 0$. 
Since $\tilde{f}$ is analytic at $\mu + i \gamma$, there exists $\epsilon > 0$ such that
\begin{equation}\label{eqpr:eig-2}
    |\tilde{f}(\lambda)| \geq C_3 |\lambda - (\mu + i \gamma)|^\nu \qquad (\lambda \in B_\epsilon (\mu + i \gamma))
\end{equation}
for some $C_3 > 0$.
Here, $\nu$ is the algebraic multiplicity of the eigenvalue $\mu$ of $T_0$, that is, the multiplicity of the zero $\mu$ of the analytic function $z \mapsto BC[\psi_+(\cdot,z)]$. 
Let $C_0 = (2 C_1/C_3)^{1/\nu}$ and $\beta = C_2 / \nu$.
Make $N_0 \in \N$ large enough such that $C_0 e^{- \beta R_n} < \epsilon$ $(n \geq N_0)$.
Combining \eqref{eqpr:eig-1} and \eqref{eqpr:eig-2}, for all $n \geq N_0$ and all $\lambda \in \C$ with 
\begin{equation*}
    |\lambda - (\mu + i \gamma)| = C_0 e^{- \beta R_n}
\end{equation*}
we have 
\begin{equation*}
    |\tilde{f}_n(\lambda) - \tilde{f}(\lambda)| \leq \frac{1}{2} |\tilde{f}(\lambda)| < |\tilde{f}(\lambda)|.
\end{equation*}
By Rouch\'e's theorem, for all $n \geq N_0$ there exists a zero $ \lambda_n \in U$ of $\tilde{f}_n$ satisfying inequality \eqref{eq:exp-conv-ineq}.
\end{proof}

The next result concerns spectral pollution - the set of spectral pollution is defined by equation \eqref{eq:spec-poll-defn}.

\begin{theorem}\label{thm:spec-poll}
The set of spectral pollution of the sequence of operators $\seqn{T_{R_n}}$ with respect to the limit operator $\T = T_0 + i \gamma$ satisfies
\begin{equation*}
    \sigpoll(\seqn{T_{R_n}}) \subseteq \sigess(T_0)\cup \Sp.
\end{equation*}
\end{theorem}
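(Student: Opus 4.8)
The plan is to show directly that any point of $\sigpoll(\seqn{T_{R_n}})$ lies in $\sigess(T_0)\cup\Sp$. The first step is a reduction. By the definition \eqref{eq:spec-poll-defn}, $\sigpoll(\seqn{T_{R_n}})$ is disjoint from $\sigma(\T)$, and $\sigma(\T)\supseteq\sigess(\T)=i\gamma+\sigess(T_0)$; hence any $\lambda\in\sigpoll(\seqn{T_{R_n}})$ automatically satisfies $\lambda\notin i\gamma+\sigess(T_0)$. Thus it suffices to show that no $\lambda\in\C\bs(S\cup\Sp)$ belongs to $\sigpoll(\seqn{T_{R_n}})$, where $S=\sigess(T_0)\cup(i\gamma+\sigess(T_0))$ as in \eqref{eq:S-defn}. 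Suppose, for contradiction, that $\lambda\in\C\bs(S\cup\Sp)$, that there is an infinite $I\subset\N$ and points $\lambda_n\in\sigma(T_{R_n})$ ($n\in I$) with $\lambda_n\to\lambda$, and that $\lambda\notin\sigma(\T)$. By Corollary \ref{col:S-Sp-nhood} there is a bounded open neighbourhood $U$ of $\lambda$ with $\overline U\subset\C\bs S$ and $|\Lambda(R_n,z)|\geq C$ for every $z\in U$ and every $n\geq N_0$, and $\lambda_n\in U$ for all large $n\in I$.

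Next I would identify $\lambda_n$ as a zero of $f_{R_n}$. Since $i\gamma\chi_{[0,R_n]}$ is a relatively compact perturbation of $T_0$, $\sigess(T_{R_n})=\sigess(T_0)\subset S$, and the spectrum of $T_{R_n}$ in $\C\bs\sigess(T_0)$ consists of isolated eigenvalues of finite algebraic multiplicity — this is standard for Sturm–Liouville operators in the limit-point case, and can also be read off from the explicit variation-of-parameters resolvent underlying the proof of Lemma \ref{lem:fR}. Hence $\lambda_n$, which lies in $\sigma(T_{R_n})\cap U\subset\sigma(T_{R_n})\cap(\C\bs S)$, is an eigenvalue of $T_{R_n}$, so Lemma \ref{lem:fR} gives $f_{R_n}(\lambda_n)=0$ for all large $n\in I$.

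Using $\alpha_-(R,\lambda)=BC[\psip(\cdot,\lambda-i\gamma)]\,\Lambda(R,\lambda)$, the relation $f_{R_n}(\lambda_n)=0$ rearranges to
\begin{equation*}
    BC[\psip(\cdot,\lambda_n-i\gamma)]\,\Lambda(R_n,\lambda_n)=-\,\alpha_+(R_n,\lambda_n)\,e^{2ik(\lambda_n-i\gamma)R_n}.
\end{equation*}
Since $\overline U\subset\C\bs S$, Assumption \ref{ass:exp} provides constants $C',c>0$ with $|\alpha_+(R_n,\lambda)|\leq C'$ for $\lambda\in U$ and all $n$, and $\Im k(\lambda-i\gamma)\geq c$ for $\lambda\in U$ (exactly as in the proof of Theorem \ref{thm:exp-conv}); combined with $|\Lambda(R_n,\lambda_n)|\geq C$ this yields
\begin{equation*}
    \abs*{BC[\psip(\cdot,\lambda_n-i\gamma)]}\leq \frac{C'}{C}\,e^{-2cR_n}\longrightarrow 0\qquad(n\to\infty,\ n\in I).
\end{equation*}
Since $z\mapsto BC[\psip(\cdot,z-i\gamma)]$ is analytic, hence continuous, on $\C\bs(i\gamma+\sigess(T_0))\supseteq\overline U$ and $\lambda_n\to\lambda$, we conclude $BC[\psip(\cdot,\lambda-i\gamma)]=0$. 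As noted after \eqref{eq:Lam-defn}, the zeros of $z\mapsto BC[\psip(\cdot,z-i\gamma)]$ are exactly the eigenvalues of $\T=T_0+i\gamma$, so $\lambda\in\sigma(\T)$, contradicting $\lambda\notin\sigma(\T)$. This completes the argument.

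I expect the main obstacle to be the second step: promoting "$\lambda_n\in\sigma(T_{R_n})$" to "$\lambda_n$ is an eigenvalue of $T_{R_n}$" (equivalently, to $f_{R_n}(\lambda_n)=0$), which rests on the spectrum of $T_{R_n}$ outside $\sigess(T_0)$ being purely discrete — a consequence of Fredholm theory for the relatively compact perturbation, or of the explicit Sturm–Liouville resolvent. Everything else is a routine continuity argument built on Corollary \ref{col:S-Sp-nhood} and the uniform bounds from Assumption \ref{ass:exp} that were already used in Theorem \ref{thm:exp-conv}; notably, unlike in Theorem \ref{thm:exp-conv}, no appeal to Rouché's theorem is required here.
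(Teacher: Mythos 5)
Your argument is correct, and it reaches the theorem by a genuinely different (though closely related) route from the paper. The paper fixes $\mu\in\rho(\T)\bs(\sigess(T_0)\cup\Sp)$ and shows that a whole ball $B_\epsilon(\mu)$ is free of zeros of $f_{R_n}$ for large $n$: it freezes the coefficients at $\mu$ to form the comparison function $f_n^{(\mu)}(\lambda)=\alpha_+(R_n,\mu)e^{ik(\lambda-i\gamma)R_n}+\alpha_-(R_n,\mu)e^{-ik(\lambda-i\gamma)R_n}$, proves Lipschitz-in-$\lambda$ bounds $|\alpha_\pm(R_n,\lambda)-\alpha_\pm(R_n,\mu)|\leq C|\lambda-\mu|$ via Cauchy's integral formula, and concludes $|f_{R_n}-f_n^{(\mu)}|<|f_n^{(\mu)}|$ on the ball, so $f_{R_n}$ cannot vanish there. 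You instead take a hypothetical polluting sequence $\lambda_n\to\lambda$, use $f_{R_n}(\lambda_n)=0$ to obtain $|BC[\psip(\cdot,\lambda_n-i\gamma)]|\,|\Lambda(R_n,\lambda_n)|=|\alpha_+(R_n,\lambda_n)|\,e^{-2\Im k(\lambda_n-i\gamma)R_n}$, and let $n\to\infty$ to force $BC[\psip(\cdot,\lambda-i\gamma)]=0$, contradicting $\lambda\notin\sigma(\T)$. Both proofs rest on the same ingredients --- Lemma \ref{lem:fR}, the uniform lower bound on $|\Lambda(R_n,\cdot)|$ from Corollary \ref{col:S-Sp-nhood}, and the uniform bounds $|\alpha_+(R_n,\cdot)|\leq C$ and $\Im k(\cdot-i\gamma)\geq c$ on a compact neighbourhood in $\C\bs S$ --- but yours dispenses with the comparison function and the Cauchy-formula Lipschitz estimates, making it shorter; the paper's version yields slightly more, namely a fixed zero-free neighbourhood of each non-polluted point, though for the statement being proved the two formulations are equivalent (and, as you note, neither needs Rouch\'e). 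The step you flag as the main obstacle --- promoting $\lambda_n\in\sigma(T_{R_n})\cap(\C\bs S)$ to a zero of $f_{R_n}$ --- is used silently by the paper as well, whose proof simply asserts that it suffices to show $f_{R_n}$ has no zeros near $\mu$; your justification via discreteness of the spectrum outside $\sigess(T_0)$ (relatively compact perturbation / explicit Sturm--Liouville resolvent) is the appropriate one, so this is not a gap relative to the paper's own level of detail.
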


\begin{proof}
Let $C > 0$ denote an arbitrary constant independent of $\lambda$ and $n$.

Let $\mu \in \C \bs (S \cup \Sp)$ and assume that $\mu $ is not an eigenvalue of $\T$.
Then $\mu$ is an arbitrary element of $\rho(T) \bs (\sigess(T_0) \cup \Sp)$. We aim to show that $\mu \notin \sigpoll(\seqn{T_{R_n}})$, for which it suffices to show that there exists a neighbourhood $U$ of $\mu$ such that $f_{R_n}$ has no zeros in $U$ for large enough $n$.

Since $\mu \notin \Sp$ and $BC[\psi_+(\cdot,\mu - i \gamma)] \neq 0$,
\begin{equation}\label{eqpr:spec-poll-3}
    |\alpha_-(R_n,\mu)| = |BC[\psi_+(\cdot,\mu - i \gamma)]\Lambda(R_n,\mu)| \geq C
\end{equation}
for large enough $n$.
Let $\epsilon > 0$ be small enough so that $\overline{B_\epsilon(\mu)} \subseteq \C \bs S$.
Then by Assumption \ref{ass:exp} we have 
\begin{equation}\label{eqpr:spec-poll-2}
     |\alpha_\pm(R_n,\lambda)| \leq C \text{ and } \Im k(\lambda - i \gamma) \geq C \qquad (\lambda \in B_\epsilon(\mu), n \in \N)
\end{equation}
Using Cauchy's integral formula as in \eqref{eq:power-method}, and making $\epsilon > 0$ small enough, we have
\begin{equation}\label{eqpr:spec-poll-1}
    |\alpha_\pm(R_n,\lambda) - \alpha_\pm(R_n,\mu)| \leq C | \lambda - \mu | \qquad (\lambda \in B_\epsilon(\mu),n \in \N).
\end{equation}
Define approximation $f_n^{(\mu)}$ to $f_{R_n}$ by 
\begin{equation*}
    f_n^{(\mu)}(\lambda) := \alpha_+(R_n,\mu)e^{i k(\lambda - i \gamma) R_n} + \alpha_-(R_n,\mu)e^{-i k(\lambda - i \gamma) R_n}.
\end{equation*}
By \eqref{eqpr:spec-poll-1} we have
\begin{equation*}
    |f_{R_n}(\lambda) - f_n^{(\mu)}(\lambda)| \leq C |\lambda - \mu|e^{\Im k(\lambda - i \gamma) R_n} \qquad (\lambda \in B_\epsilon(\mu),n \in \N).
\end{equation*} 
Using \eqref{eqpr:spec-poll-3} and \eqref{eqpr:spec-poll-2} we have  
\begin{align*}
    |e^{i k(\lambda - i \gamma) R_n} f_n^{(\mu)}(\lambda)| & \geq \abs*{|\alpha_-(R_n,\mu)| - |\alpha_+(R_n,\mu)|e^{- 2 \Im k(\lambda - i \gamma) R_n}} \\
    & \geq \frac{|\alpha_-(R_n,\mu)|}{2} \geq C \qquad (\lambda \in B_\epsilon(\mu))
\end{align*}
for large enough $n$.
Finally, making $\epsilon > 0$  small enough if necessary, we have 
\begin{equation*}
    |f_{R_n}(\lambda) - f_n^{(\mu)}(\lambda)| < |f_n^{(\mu)}(\lambda)| \qquad (\lambda \in B_\epsilon(\mu))
\end{equation*}
for large enough $n$. $f_{R_n}$ therefore has no zeros in $U := B_\epsilon(\mu)$ for large enough $n$, completing the proof.

\end{proof}

In the case of Schr\"odinger operators on $L^2(0,\infty)$ with $L^1$ potentials, described in Example \ref{ex:Lev-1}, $\Sp$ can be easily computed to be the empty set. 

\begin{example}[Schr\"odinger operators with $L^1$ potentials, continued]\label{ex:Lev-2}
Consider again the case $p = r = 1$ with $q \in L^1(0,\infty)$.
Then, using expression \eqref{eq:Lam-defn} for $\Lambda$ and the expressions for $\tilde{\psi}_\pm,\,\tilde{\psi}^d_\pm$ in Example \ref{ex:Lev-1} (ii), $\Lambda$ satisfies 
\begin{equation*}
    \Lambda(R,\lambda) \to - i \br*{\sqrt{\lambda - i \gamma} + \sqrt{\lambda}} \text{ as }R \to \infty
\end{equation*}
for any $\lambda \in \C \bs S $. Since $\sqrt{\lambda - i \gamma} \neq  - \sqrt{\lambda} $ for all $\lambda \in \C$ we have 
\begin{equation*}
    \Sp = \emptyset
\end{equation*}
for any $\seqn{R_n} \subset \R_+$ with $R_n \to \infty$ as $n \to \infty$.
\end{example}

For Schr\"odinger operators with eventually real periodic potentials, described in Example \ref{ex:Floq-1}, the computation of $\Sp$ is more involved.
\begin{example}[Eventually periodic Schr\"odinger operators, continued]\label{ex:Floq-2}
Consider again the case $p = r = 1$ with $q|_{[X,\infty)}$ real-valued and $a$-periodic for some $X \geq 0$ and $a > 0$.   
Assume that $\gamma > 0$ and let $R_n = x_0 + n a$ ($n \in \N$) for any fixed $x_0 \in [X, X+a)$. 

Using the expressions \eqref{eq:Floq-psit} and \eqref{eq:Floq-psit-d} for $\tilde{\psi}_\pm$ and $\tilde{\psi}^d_\pm$ as well as the definition of $\Sp$ in equation \eqref{eq:Sp-defn}, we infer that $\lambda \in \Sp$ if and only if 
\begin{equation}\label{eq:Floq-Sp-zeros}
    \psi_+(x_0,\lambda) \psi'_-(x_0,\lambda - i \gamma) - \psi'_+(x_0,\lambda) \psi_-(x_0,\lambda - i \gamma) = 0.
\end{equation}
$\psi_\pm(x_0,\cdot)$ and $\psi'_\pm(x_0,\cdot)$ are analytic on $\C \bs \sigess(T_0)$ and can be analytically continued into an open neighbourhood in $\C$ of any point in $\sigess(T_0) \bs B_{\text{ends}}$ (recall that $B_{\text{ends}}$ denotes the set of band-ends for the essential spectrum of $T_0$).
Consequently, $\Sp$ consists of isolated points in the complex plane that can only accumulate to the band-ends of either $T_0$ or $T$, that is, to the set $B_{\text{ends}}\cup(i \gamma + B_{\text{ends}})$.

Recall that $\sigess((T_{R_n}))$ denotes the limiting essential spectrum of the sequence of operators $\seqn{T_{R_n}}$.
$\Sp$ satisfies the inclusion
\begin{equation}\label{eq:Floq-Sp-incl-1}
    \Sp \subseteq \sigess(\seqn{T_{R_n}}).
\end{equation}
\end{example}

\begin{proof}[Proof of inclusion \eqref{eq:Floq-Sp-incl-1}]

Throughout the proof, $C>0$ denotes an arbitrary constant independent of $n$ .

By $\norm{\cdot}_{L^2}$ and $\norm{\cdot}_{L^\infty}$, we mean $\norm{\cdot}_{L^2(0,\infty)}$ and $\norm{\cdot}_{L^\infty(0,\infty)}$ respectively. 

Let $\lambda \in \Sp$. 
Then, using the property \eqref{eq:Floq-soln-prop} of the Floquet solutions, \eqref{eq:Floq-Sp-zeros} implies that,
\begin{equation}\label{eqpr:Floq-bigex-1}
    \psi_+(R_n,\lambda) \psi'_-(R_n,\lambda - i \gamma) - \psi'_+(R_n,\lambda) \psi_-(R_n,\lambda - i \gamma) = 0 
\end{equation}
for all $n$. 
\eqref{eqpr:Floq-bigex-1} ensures that there exists $C_{1,n},C_{2,n} \in \C \bs \set{0} $ independent of $x$ such that 
\begin{equation}\label{eqpr:step-1-un}
    u_n(x) := \begin{cases} C_{1,n} \psi_-(x,\lambda - i \gamma) & \text{if }x \in [0,R_n) \\ C_{2,n} \psi_+(x,\lambda) & \text{if }x \in [R_n,\infty)
    \end{cases}
\end{equation}
is absolutely continuous and solves the Schr\"odinger equation $\tilde{T}_{R_n} u = \lambda u$, where $\tilde{T}_{R_n}$ denotes the differential expression on $[0,\infty)$ corresponding to $T_{R_n}$.
Define 
\begin{equation*}
    v_n = \frac{\tilde{\chi}_n u_n}{\norm{\tilde{\chi}_n u_n}_{L^2}}. 
\end{equation*}
where $\tilde{\chi}_n(x) := \tilde{\chi}(x/R_n)$ and $\tilde{\chi}:[0,\infty) \to [0,1]$ is any smooth function such that $\tilde{\chi} = 0$ on $[0,\frac{1}{4}]$ and $\tilde{\chi} = 1$ on $[\frac{1}{2},\infty)$. 
Then $v_n \in D(T_0)=D(T_{R_n})$, $\norm{v_n}_{L^2} = 1$ and, since $\inner{v_n,\varphi}_{L^2} = 0$ for any $\varphi\in C^\infty_c[0,\infty)$ and any large enough $n$, $v_n \wto 0$ in $L^2(0,\infty)$. 

By unique continuation, 
\begin{equation*}
    \norm{\psi'_-(\cdot,\lambda - i \gamma)}_{L^2(I)} \leq C \norm{\psi_-(\cdot,\lambda - i \gamma)}_{L^2(I)} 
\end{equation*}
for $I = [0,X],\,[X,X+a]$ or $[X,x_0]$ so, using the property \eqref{eq:Floq-soln-prop} of the Floquet solutions
\begin{equation}\label{eq:step-1-1}
     \norm{\psi'_-(\cdot,\lambda - i \gamma)}^2_{L^2(0,R_n)} \leq C \norm{\psi_-(\cdot,\lambda - i \gamma)}^2_{L^2(0,R_n)}
\end{equation}
for all $n$. 
Also, noting that $\norm{\psi_-(\cdot,\lambda-i\gamma)}_{L^2(0,x)}$ is exponentially growing in $x$, we deduce that,
\begin{equation}\label{eq:step-1-2}
    \norm{u_n}_{L^2} \leq C \norm{u_n}_{L^2(\frac{1}{2}R_n,\infty)} \leq C \norm{\tilde{\chi}_n u_n}_{L^2}.    
\end{equation}
for all large enough $n$.

By the product rule,
\begin{align*}
    \norm{(T_{R_n} - \lambda)v_n}_{L^2} \leq \frac{1}{\norm{\tilde{\chi}_n u_n}_{L^2}} \sbr*{ \norm{\tilde{\chi}_n(\tilde{T}_{R_n} - \lambda)u_n}_{L^2} + 2 \norm{\tilde{\chi}'_n u'_n}_{L^2} + \norm{\tilde{\chi}''_n u_n}_{L^2} }.
\end{align*}
The first term in the square brackets above vanishes and $\tilde{\chi}^{(k)}_n$ are supported in $[0,R_n]$ with $\norm{\tilde{\chi}^{(k)}_n}_{L^\infty} \leq C / R_n^k$ so 
\begin{equation*}
    \norm{(T_{R_n} - \lambda)v_n}_{L^2} \leq C \frac{\norm{u_n}_{L^2}}{\norm{\tilde{\chi}_n u_n}_{L^2}} \sbr*{ \frac{1}{R_n}\frac{\norm{\psi'_-(\cdot,\lambda - i \gamma)}_{L^2(0,R_n)}}{\norm{\psi_-(\cdot,\lambda - i \gamma)}_{L^2(0,R_n)}} + \frac{1}{R_n^2} } \to 0\text{ as }n \to \infty.
\end{equation*}
Here, we used estimates \eqref{eq:step-1-1} and \eqref{eq:step-1-2}.
Consequently, by the definition of limiting essential spectrum (see Definition \ref{def:lim-ess}), we have $\lambda \in \sigess(\seqn{T_{R_n}})$.

\end{proof}

\section{Inclusion for the Essential Spectrum}\label{sec:ess-incl}

Consider the Sturm-Liouville operator $T_0$ introduced in Section \ref{sec:spec-incl}. 
Suppose that the conditions of Assumption \ref{ass:exp} are met. 
As before, fix $\gamma \in \C \bs \set{0}$, define the perturbed operators by 
\begin{equation*}
    T_R u = T_0 u + i \gamma \chi_{[0,R]} u,\, D(T_R) = D(T_0) \qquad (R \in \R_+)
\end{equation*}
and define the limit operator by $\T = T_0 + i \gamma$.

In this section, we prove that the essential spectrum of the limit operator $\T$ is approximated by the eigenvalues of $T_R$ as $R\to \infty$. To achieve this, we require an additional assumption which ensures that the solution $\psi_+$ of $\tilde{T}_0 u =  \lambda u$ introduced in Assumption \ref{ass:exp} can be analytically continued, with respect to the spectral parameter $\lambda$, into an open neighbourhood in $\C$ of any point in the interior of $\sigess(T_0)$. The interior of the essential spectrum is denoted by $\text{int}(\sigess(T_0))$ and defined with respect to the subspace topology.

\begin{assumption}\label{ass:analytic}
$T_0$ is such that $\sigess(T_0) \subseteq \R$. For any $\mu \in \text{int}(\sigess(T_0))$, there exists an open neighbourhood $V_\mu$ of $\mu$ such that:
\begin{enumerate}[label=(\roman*)]
    \item $k$ admits analytic continuations $\kappa_u\,(\kappa_l)$ from the half-planes $\C_+\,(\C_-)$ respectively into $V_\mu$, with 
    \begin{equation}\label{eq:kappa-cond}
        \Im \kappa_u(z),\,-\Im \kappa_l(z) \begin{cases} > 0  & \text{if }z \in \C_+\cap V_\mu \\
        = 0 & \text{if }z \in \R \cap V_\mu \\
        < 0 & \text{if }z \in \C_- \cap V_\mu
        \end{cases}.
    \end{equation}
    \item For any $R > 0$, $\tpsip(R,\cdot)$ admits analytic continuations $\tilde{\varphi}_u(R,\cdot) \, (\tilde{\varphi}_l(R,\cdot))$ from $\C_+ \, (\C_-)$ respectively into $V_\mu$ and $\tpsipd(R,\cdot)$ admits analytic continuations $\tilde{\varphi}_u^d(R,\cdot) \, (\tilde{\varphi}_l^d(R,\cdot)) $ from $\C_+ \,(\C_-)$ respectively into $V_\mu$. $\tilde{\varphi}_j$ and $\tilde{\varphi}_j^d$ satisfy 
    \begin{equation}\label{eq:phi-t-Linf}
        \norm{\tilde{\varphi}_j(\cdot,z)}_{L^\infty(0,\infty)},\, \norm{\tilde{\varphi}_j^d(\cdot,z)}_{L^\infty(0,\infty)} < \infty \qquad (j = u \text{ or }l)
    \end{equation}
    for all $z \in V_\mu$.
    \item For each $z \in V_\mu$, the functions $\varphi_u(\cdot,z)$ and $\varphi_l(\cdot,z)$, defined by
    \begin{equation}\label{eq:phi-j-defn}
    \varphi_j(x,z) := e^{i \kappa_j(z) x} \tilde{\varphi}_j(x,z), \qquad (j = u  \text{ or }l),
    \end{equation}
    solve the equation $\tilde{T}_0 \varphi = z \varphi $ and satisfy 
    \begin{equation}\label{eq:phi-j-deriv}
     \varphi'_j(x,z) = e^{i \kappa_j(z)x} \tilde{\varphi}^d_j(x,z). \qquad (j = u \text{ or }l)
    \end{equation}
\end{enumerate}
\end{assumption}

In the following two examples, by analytic continuations we mean analytic continuations from $\C_+$ and $\C_-$ into $V_\mu$. 

\begin{example}[Schr\"odinger operators with $L^1$ potentials, continued]\label{ex:Lev-3}
    Consider again the case $p = r = 1$ with $q \in L^1(0,\infty)$, introduced in Example \ref{ex:Lev-1}.
    Recall that $k(\lambda) = \sqrt{\lambda}$ so Assumption \ref{ass:analytic} (i) is satisfied in this case. Recall that 
    \begin{equation*}
        \tilde{\psi}_\pm(x,z)=1 + E_\pm(x,z)\text{ and }\tilde{\psi}^d_\pm(x,z)= \pm i \sqrt{z}\br{1 + E^d_\pm(x,z)}.
    \end{equation*}
     In order to show that Assumption \ref{ass:analytic} (ii) and (iii) hold in this case it suffices to show that for any $\mu \in \text{int}(\sigess(T_0))$ and any $x \in [0,\infty)$, $E_+(x,\cdot)$ and $E^d_+(x,\cdot)$ admit analytic continuations $E(x,\cdot)$ and $E^d(x,\cdot)$ (respectively) into an open neighbourhood $V_\mu$ of $\mu$ independent of $x$, such that the function $\varphi(\cdot,z)$ defined by 
     \begin{equation}\label{eq:Lev-cont}
        \varphi(x,z) := e^{i \sqrt{z} x} \br*{ 1 + E(x,z)}
     \end{equation}
     satisfies 
     \begin{equation}\label{eq:Lev-cont-deriv}
        \varphi'(x,z)  = i \sqrt{z} e^{i \sqrt{z} x} \br*{ 1 + E^d(x,z)},
     \end{equation}
     solves the Schr\"odinger equation $ - \varphi'' + q \varphi = z \varphi$ and satisfies 
    \begin{equation*}
        |E_\pm(x,z)|,|E^d_\pm(x,z)|\to 0\text{ as }x \to \infty
    \end{equation*}
    for any fixed $z \in V_\mu$. 
    Note that $\sqrt{\cdot}$ is understood to have been analytically continued into $V_\mu$ in \eqref{eq:Lev-cont} and \eqref{eq:Lev-cont-deriv}.
    Additional conditions on the potential $q$ are required to ensure that this holds.
    Two such conditions are:
    \begin{enumerate}[label=\rm{(\alph*)},wide, labelindent=0pt]
    \item 
    (Naimark condition \cite[Lemma 1]{stepin2015complex}) There exists $a > 0$ such that 
    \begin{equation}
        \int_0^\infty e^{ax}|q(x)| \d x < \infty.
    \end{equation}
    
    \item (Dilation analyticity \cite{brown2002analytic}) $q$ is real-valued and can be analytically continued into some open, convex region $U\subset \C$ containing a sector $\set{z \in \C : \arg(z) \in [-\theta,\theta]}$ for some $\theta \in (0,\frac{\pi}{2}]$. Furthermore, there exists $C_0 > 0$ and $\beta > 1$ independent of $z$ such that 
    \begin{equation}
        |q(z)| \leq C_0 |z|^{-\beta}
    \end{equation}
    for all $z \in U$.
    \end{enumerate}
    
\end{example}

\begin{example}[Eventually periodic Schr\"odinger operators, continued]\label{ex:Floq-3}
Consider again the case $p = r = 1$ with $q$ eventually real periodic, introduced in Example \ref{ex:Floq-1}. 
As mentioned in Example \ref{ex:Floq-1}, for any $\mu \in \text{int}(\sigess(T_0)) = \sigess(T_0) \bs B_{\text{ends}}$ and any $x \in [0,\infty)$, the functions $k$, $\psi_+(x,\cdot)$ and $\psi'_+(x,\cdot)$ admit analytic continuations into an open neighbourhood $V_\mu$ of $\mu$.
\begin{enumerate}[label=\rm{(\roman*)},wide, labelindent=0pt]
\item 
By the expression \eqref{eq:Floq-mult} for $\rho_+$, the analytic continuations $\tilde{\rho}_+$ for $\rho_+$, from $\C_\pm$ into $V_\mu$, satisfies
\begin{equation*}
    |\tilde{\rho}_+| \begin{cases} < 1  & \text{if }z \in \C_\pm\cap V_\mu \\
        = 1 & \text{if }z \in \R \cap V_\mu \\
        > 1 & \text{if }z \in \C_\mp \cap V_\mu
        \end{cases}.
\end{equation*}
Hence, the analytic continuations of $k$ satisfy equation \eqref{eq:kappa-cond}.
\item
 The analytic continuations of $\tilde{\psi}_+(x,\cdot)$ and $\tilde{\psi}^d_+(x,\cdot)$ satisfy the $L^\infty$ estimates \eqref{eq:phi-t-Linf} by their definitions \eqref{eq:Floq-psit} and \eqref{eq:Floq-psit-d}.
\item
The analytic continuations with respect to $z$ of $\psi_+(\cdot,z)$ solve the Schr\"odinger equation $-\psi'' + q \psi = z\psi$ since by \eqref{eq:Floq-soln-eigenvect} they are linear combinations of the solutions $\phi_1(\cdot,z)$ and $\phi_2(\cdot,z)$. 
Expressions \eqref{eq:phi-j-defn} and \eqref{eq:phi-j-deriv} for the analytic continuations of $\psi_+$ and $\psi'_+$ hold by the definition of (the analytic continuations of) $\tilde{\psi}_+$ and $\tilde{\psi}^d_+$ respectively. 

\end{enumerate}
\end{example}

Throughout the remainder of the section, let $\mu \in \text{int}(\sigess(T_0))$ and suppose that the conditions of Assumption \ref{ass:analytic} are satisfied. 
Also, assume without loss of generality that $(i \gamma + V_\mu )\cap \R = \emptyset$. 

\begin{lemma}\label{lem:gR}
$\lambda \in i \gamma + V_\mu$ is an eigenvalue of $T_R$ if and only if 
\begin{equation*}
    g_R(\lambda) := \beta_u(R,\lambda)e^{i \kappa_u(\lambda - i \gamma) R} + \beta_l(R,\lambda)e^{i \kappa_l(\lambda - i \gamma) R} = 0
\end{equation*}
where 
\begin{equation*}
    \beta_u(R,\lambda)  := BC[\varphi_l(\cdot,\lambda - i \gamma)] \br*{ \tilde{\varphi}_u(R,\lambda - i \gamma) \tpsipd(R, \lambda) - \tilde{\varphi}^d_u(R, \lambda - i \gamma) \tpsip(R, \lambda) } 
\end{equation*}
and 
\begin{equation*}
    \beta_l(R,\lambda)  := BC[\varphi_u(\cdot,\lambda - i \gamma)] \br*{ \tpsip(R,\lambda) \tilde{\varphi}_l^d(R, \lambda - i \gamma) - \tpsipd(R,\lambda) \tilde{\varphi}_l(R, \lambda - i \gamma) }.
\end{equation*}
Furthermore, $g_R$ is analytic on $i \gamma + V_\mu$. 
\end{lemma}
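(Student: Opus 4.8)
The proof follows the template of Lemma~\ref{lem:fR}; the single novelty is that the spectral parameter governing the solutions on $[0,R]$ now lies in $V_\mu$, which meets $\sigess(T_0)$, so the solutions $\psip(\cdot,\lambda-i\gamma),\psim(\cdot,\lambda-i\gamma)$ supplied by Assumption~\ref{ass:exp} are unavailable and must be replaced by the analytic continuations $\varphi_u(\cdot,\lambda-i\gamma),\varphi_l(\cdot,\lambda-i\gamma)$ of Assumption~\ref{ass:analytic}. The parameter $\lambda$ itself lies in $i\gamma+V_\mu$, which is disjoint from $\R\supseteq\sigess(T_0)$ by the standing assumption $(i\gamma+V_\mu)\cap\R=\emptyset$, so Assumption~\ref{ass:exp} still applies at $\lambda$ and $\psip(\cdot,\lambda)$ remains, up to a scalar, the unique $L^2_r$ solution of $\tilde{T}_0 u=\lambda u$ near $+\infty$, the solution $\psim(\cdot,\lambda)$ being exponentially growing.

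The first step is to record that $\varphi_u(\cdot,z)$ and $\varphi_l(\cdot,z)$ span the two-dimensional solution space of $\tilde{T}_0 u=zu$ for each $z\in V_\mu$. Their Wronskian is independent of $x$ and, by Assumption~\ref{ass:analytic}(ii)--(iii), analytic in $z$ on $V_\mu$; for $z\in V_\mu\bs\R$ it is nonzero because $\varphi_u(\cdot,z)$ decays while $\varphi_l(\cdot,z)$ grows exponentially, by the sign conditions in \eqref{eq:kappa-cond}. Hence it is a non-trivial analytic function on $V_\mu$, and one checks --- shrinking $V_\mu$ if necessary, which is harmless for the applications --- that it has no zeros on $V_\mu$, which is the asserted spanning.

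With this in hand one repeats the argument of Lemma~\ref{lem:fR} with $\psi_\pm(\cdot,\lambda-i\gamma)$ replaced by $\varphi_u(\cdot,\lambda-i\gamma),\varphi_l(\cdot,\lambda-i\gamma)$: a solution on $[0,R]$ of $(\tilde{T}_0+i\gamma\chi_{[0,R]})u=\lambda u$ satisfying $BC[\cdot]$ is, up to a scalar, $u_1(x,\lambda):=BC[\varphi_l(\cdot,\lambda-i\gamma)]\,\varphi_u(x,\lambda-i\gamma)-BC[\varphi_u(\cdot,\lambda-i\gamma)]\,\varphi_l(x,\lambda-i\gamma)$, which is not identically zero by the spanning just established; any admissible solution on $[R,\infty)$ is a multiple of $\psip(\cdot,\lambda)$; gluing the two gives an element of $D(T_R)=D(T_0)$, and this is a genuine ($L^2_r$, nonzero) eigenfunction of $T_R$ exactly when the matching condition $u_1(R,\lambda)\psip'(R,\lambda)-u_1'(R,\lambda)\psip(R,\lambda)=0$ holds. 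Substituting the decompositions $\varphi_j(x,z)=e^{i\kappa_j(z)x}\tilde{\varphi}_j(x,z)$, $\varphi_j'(x,z)=e^{i\kappa_j(z)x}\tilde{\varphi}^d_j(x,z)$ ($j=u,l$) and $\psip(x,z)=e^{ik(z)x}\tpsip(x,z)$, $\psip'(x,z)=e^{ik(z)x}\tpsipd(x,z)$, and factoring out $e^{ik(\lambda)R}$, this matching condition becomes equivalent to $\beta_u(R,\lambda)e^{i\kappa_u(\lambda-i\gamma)R}+\beta_l(R,\lambda)e^{i\kappa_l(\lambda-i\gamma)R}=g_R(\lambda)=0$ once the $e^{i\kappa_u(\lambda-i\gamma)R}$ and $e^{i\kappa_l(\lambda-i\gamma)R}$ terms are collected, with $\beta_u,\beta_l$ exactly as in the statement. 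Analyticity of $g_R$ on $i\gamma+V_\mu$ is then automatic, since $\kappa_u,\kappa_l,\tilde{\varphi}_u(R,\cdot),\tilde{\varphi}_l(R,\cdot),\tilde{\varphi}^d_u(R,\cdot),\tilde{\varphi}^d_l(R,\cdot)$ and the functionals $BC[\varphi_u(\cdot,\cdot)],BC[\varphi_l(\cdot,\cdot)]$ are analytic on $V_\mu$ by Assumption~\ref{ass:analytic}, while $k,\tpsip(R,\cdot),\tpsipd(R,\cdot)$ are analytic on $\C\bs\sigess(T_0)\supseteq i\gamma+V_\mu$ by Assumption~\ref{ass:exp}, and these enter only through the entire shift $\lambda\mapsto\lambda-i\gamma$ and through exponentials.

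The only genuinely delicate point is the spanning in the first step. Off the real axis it is free from the growth/decay dichotomy of \eqref{eq:kappa-cond}, but at real $z\in\R\cap V_\mu\subseteq\sigess(T_0)$ both continuations are bounded and that dichotomy no longer separates them, so one must fall back on analyticity of the non-trivial Wronskian together with the fact that the exceptional points at which $\varphi_u(\cdot,z)$ and $\varphi_l(\cdot,z)$ become proportional form a discrete set --- in the two cases of interest these are precisely the band-ends and the resonances embedded in $\sigess(T_0)$, which is why the restriction $\mu\in\text{int}(\sigess(T_0))$ together with a sufficiently small $V_\mu$ suffices. Granting this, everything else is the same bookkeeping as in the proof of Lemma~\ref{lem:fR}.
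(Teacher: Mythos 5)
Your main line coincides with the paper's proof: on $[0,R]$ the eigenvalue equation reads $\tilde{T}_0u=(\lambda-i\gamma)u$ with $\lambda-i\gamma\in V_\mu$, and the boundary-condition solution is $u_1=BC[\varphi_l(\cdot,\lambda-i\gamma)]\varphi_u(\cdot,\lambda-i\gamma)-BC[\varphi_u(\cdot,\lambda-i\gamma)]\varphi_l(\cdot,\lambda-i\gamma)$; on $[R,\infty)$, since $(i\gamma+V_\mu)\cap\R=\emptyset$ and $\sigess(T_0)\subseteq\R$, Assumption \ref{ass:exp} applies at $\lambda$ and every $L^2_r$ solution is a multiple of $\psip(\cdot,\lambda)$; matching at $R$, inserting the decompositions and cancelling the nonzero factor $e^{ik(\lambda)R}$ gives exactly $g_R(\lambda)=0$, with analyticity from Assumptions \ref{ass:exp}(i)--(ii) and \ref{ass:analytic}(i)--(ii). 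Up to this point your argument and the paper's are the same.

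The step you add --- your ``first step'', that $\varphi_u(\cdot,z)$ and $\varphi_l(\cdot,z)$ span the solution space for every $z\in V_\mu$ --- is indeed what makes the assertion ``every $BC$-solution on $[0,R]$ is a multiple of $u_1$'' legitimate, but your justification of it does not hold up. First, off the real axis the dichotomy is not free: Assumption \ref{ass:analytic}(ii) gives only an upper ($L^\infty$) bound on $\tilde{\varphi}_l(\cdot,z)$, so for $z\in\C_+\cap V_\mu$ nothing stated forbids $\varphi_l(\cdot,z)$ from being proportional to $\psip(\cdot,z)=\varphi_u(\cdot,z)$. Second, and decisively, ``shrinking $V_\mu$'' cannot remove a zero of the Wronskian at $\mu$ itself, the one point every shrunken neighbourhood keeps; analyticity and discreteness of the zero set give you nothing there. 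Third, your description of the exceptional set is wrong: by \eqref{eq:Sr-0-defn} an embedded resonance is a zero of $BC[\varphi_u(\cdot,\cdot)]$ or $BC[\varphi_l(\cdot,\cdot)]$, not a point where $\varphi_u$ and $\varphi_l$ become proportional, and the lemma is stated (and later needed) without excluding $\mu+i\gamma\in\Sr$, so an argument that must avoid such points cannot be the right repair. For comparison, the paper does not attempt to derive this spanning property from Assumption \ref{ass:analytic}: its proof simply asserts that the $BC$-solutions on $[0,R]$ lie in $\text{span}_\C\set{u_1(\cdot,\lambda)}$, i.e.\ it treats the continued pair as a fundamental system, which is what one checks in the concrete examples (for $q\in L^1$ the Wronskian of $\varphi_u,\varphi_l$ is $-2i\sqrt{z}\neq0$ on $V_\mu$; in the periodic case $\varphi_u,\varphi_l$ are Floquet solutions for the distinct multipliers $\rho_u\neq\rho_l$ at interior band points). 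So either take that linear independence as part of the standing hypotheses or verify it example by example; the Wronskian-plus-shrinking argument as you wrote it is not a proof of it.
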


\begin{proof}
The proof is similar to the proof of Lemma \ref{lem:fR}.

Let $\lambda \in i \gamma + V_\mu $. Any solution of the boundary value problem 
\begin{equation*}
    (\tilde{T}_0 + i \gamma \chi_{[0,R]})u = \lambda u \text{ on }[0,R],\,BC[u] = 0
\end{equation*}
must lie in $\text{span}_\C\set{u_1(\cdot,\lambda)}$, where $u_1$ is defined by 
\begin{equation*}
    u_1(x,\lambda) = BC[\varphi_l(\cdot,\lambda - i \gamma)] \varphi_u(x, \lambda - i \gamma) - BC[\varphi_u(\cdot,\lambda - i \gamma)] \varphi_l(x, \lambda - i \gamma).
\end{equation*}
Since $(i \gamma + V_\mu )\cap \R = \emptyset$, any $L^2_r$ solution of $(\tilde{T}_0 +i \gamma \chi_{[0,R]})u = \lambda u$ on $[R,\infty)$ must lie in $\text{span}_\C \set{\psip(\cdot,\lambda)}$. 
$\lambda$ is an eigenvalue if and only if 
\begin{equation*}
    u_1(R,\lambda) \psip'(R,\lambda) - u_1'(R, \lambda) \psip(R, \lambda) = 0 
\end{equation*}
which holds if and only if $g_R(\lambda) = 0$.
\end{proof}

We proceed on to the proof of inclusion for the essential spectrum of $\T$, which consists in proving that there exists eigenvalues of $T_R$ accumulating to $\mu + i \gamma$ as $R \to \infty$. 
We can only achieve this with the additional assumption that $\mu + i \gamma$ does not lie in a region of the complex plane in which either $\beta_u(R,\cdot)$ or $\beta_l(R,\cdot)$ become small as $R \to \infty$.
We now define a subset of the complex plane capturing such regions. 
\begin{definition}
Define a subset $\Sr \subset \C$ by
\begin{equation}\label{eq:Sr-defn}
    \Sr = \set*{ \lambda \in i \gamma + V_\mu \cap \R : \liminf_{R \to \infty} |\beta_j(R,\lambda)| = 0,\,j = u \text{ or }l }.
\end{equation}  
\end{definition}

The strategy of the proof is to first introduce an approximation $g_R^\infty(\lambda)$ to $g_R(\lambda)$ which is valid for $\lambda$ near $\mu + i \gamma$. 
It is then shown that there exists zeros $\lambda_R^\infty$ of $g_R^\infty$  converging to $\mu + i \gamma$ as $R \to \infty$. 
A family of simple closed contours $\ell_R$ surrounding $\lambda_R^\infty$ are constructed such that $\text{dist}(\ell_R, \mu + i \gamma) \to 0 $ as $R \to \infty$.
We estimate $|g_R^\infty|$ from below and $|g_R - g_R^\infty|$ from above on $\ell_R$ to conclude, using Rouch\'e's Theorem, that there exists a zero $\lambda_R$ of $g_R$ inside $\ell_R$ for all large enough $R$. 
Such $(\lambda_R)$ would be eigenvalues of $T_R$ and would converge to $\mu + i \gamma$ as $R \to \infty$, giving the result.

\begin{lemma}\label{lem:kappa-inverse}
 
The function $\kappa_u - \kappa_l$ has an analytic inverse $(\kappa_u - \kappa_l)^{-1}:B_{\delta}(w_0) \to \C$ for some small enough $\delta > 0$, where $w_0 := (\kappa_u - \kappa_l)(\mu)$,
 
\end{lemma}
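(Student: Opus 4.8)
The plan is to set $h:=\kappa_u-\kappa_l$, to note that $h$ is holomorphic on $V_\mu$, to prove that $h'(\mu)\neq 0$, and then to conclude by the holomorphic inverse function theorem that $h$ has an analytic inverse on a small ball $B_\delta(w_0)$ about $w_0=h(\mu)$.

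First I would record the sign behaviour of $h$. Since $\kappa_u$ and $\kappa_l$ are analytic on $V_\mu$ by Assumption \ref{ass:analytic}(i), so is $h$; and adding the two chains of inequalities in \eqref{eq:kappa-cond} (using $\Im\kappa_u>0$ and $-\Im\kappa_l>0$ on $\C_+\cap V_\mu$, and similarly on $\R\cap V_\mu$ and $\C_-\cap V_\mu$) gives
\begin{equation*}
 \Im h(z) \begin{cases} > 0 & \text{if } z\in\C_+\cap V_\mu,\\ = 0 & \text{if } z\in\R\cap V_\mu,\\ < 0 & \text{if } z\in\C_-\cap V_\mu.\end{cases}
\end{equation*}
Two consequences I would extract: $h$ is nonconstant (its imaginary part is strictly positive somewhere), and, since $\mu\in\text{int}(\sigess(T_0))\subseteq\R$ lies in $\R\cap V_\mu$, both $\kappa_u(\mu)$ and $\kappa_l(\mu)$ are real, so $w_0\in\R$.

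The crux — and the only genuine obstacle — is the non-vanishing $h'(\mu)\neq 0$. I would argue by contradiction: if $h'(\mu)=0$ then, $h$ being analytic and nonconstant near $\mu$, one can write $h(z)=w_0+c(z-\mu)^p\bigl(1+g(z)\bigr)$ for some integer $p\geq 2$, some $c\neq 0$, and some analytic $g$ with $g(\mu)=0$. Writing $z-\mu=re^{i\theta}$ with $\theta\in(0,\pi)$, so that $z\in\C_+\cap V_\mu$ once $r$ is small, we have $\arg\bigl(h(z)-w_0\bigr)=\arg c+p\theta+\arg\bigl(1+g(z)\bigr)$, and $\arg(1+g(z))\to 0$ as $r\to 0$. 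Since $p\geq 2$, the quantity $p\theta$ sweeps an interval of length at least $2\pi$ as $\theta$ runs over $(0,\pi)$, so we may choose $\theta_*\in(0,\pi)$ with $\arg c+p\theta_*$ arbitrarily close to $-\pi/2$ modulo $2\pi$ and then take $r>0$ small enough that $\arg\bigl(h(z)-w_0\bigr)$ is within, say, $\pi/4$ of $-\pi/2$; since $w_0\in\R$ this forces $\Im h(z)<0$ at a point $z\in\C_+\cap V_\mu$, contradicting the display above. (This is just the Hopf boundary-point lemma in disguise: $\Im h$ is a nonnegative, nonconstant harmonic function on $\C_+\cap V_\mu$ that vanishes on the boundary segment $\R\cap V_\mu$, so its inward normal derivative at $\mu$ is strictly positive, and by the Cauchy--Riemann equations this normal derivative equals $h'(\mu)$, which is in fact real since $\Im h\equiv 0$ on $\R\cap V_\mu$.) Hence $h'(\mu)\neq 0$.

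Finally I would invoke the holomorphic inverse function theorem: since $h$ is holomorphic near $\mu$ with $h'(\mu)\neq 0$, it restricts to a biholomorphism from an open neighbourhood $W$ of $\mu$ onto an open neighbourhood of $w_0$; picking $\delta>0$ with $B_\delta(w_0)$ inside that image, the corresponding restriction of $(h|_W)^{-1}$ is the desired analytic inverse $(\kappa_u-\kappa_l)^{-1}:B_\delta(w_0)\to\C$. Everything here is routine once $h'(\mu)\neq 0$ has been established, so that is where essentially all of the work sits.
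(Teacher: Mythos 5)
Your proof is correct, and it shares the paper's overall skeleton — use the sign structure \eqref{eq:kappa-cond} to show that $\mu$ is a simple zero of $\kappa_u-\kappa_l-w_0$ (equivalently $(\kappa_u-\kappa_l)'(\mu)\neq 0$), then apply the holomorphic inverse function theorem — but the key step is carried out by a different mechanism. The paper takes a small circle $\partial B_\epsilon(\mu)$, notes that \eqref{eq:kappa-cond} pins $\arg\bigl(\kappa_u-\kappa_l-w_0\bigr)$ to $(0,\pi)$ on the upper semicircle, $(\pi,2\pi)$ on the lower one and $\{0,\pi\}$ on the real crossings, and concludes via a topological-degree (winding number/argument principle) computation that the number of zeros in $B_\epsilon(\mu)$, counted with multiplicity, is exactly $1$, hence the zero at $\mu$ is simple. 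You instead rule out a multiple zero directly: from the local factorisation $h(z)-w_0=c(z-\mu)^p\bigl(1+g(z)\bigr)$ with $p\geq 2$, the quantity $p\theta$ sweeps an interval of length at least $2\pi$ as the direction of approach rotates through the upper half-plane, so $\arg\bigl(h(z)-w_0\bigr)$ can be forced near $-\pi/2$, giving $\Im h<0$ at a point of $\C_+\cap V_\mu$ and contradicting \eqref{eq:kappa-cond}; your Hopf boundary-point lemma aside (with the Cauchy--Riemann identification of the inward normal derivative of $\Im h$ with the real number $h'(\mu)$) is an equivalent packaging of the same fact. Your route is more elementary and self-contained (no citation of degree theory is needed), while the paper's circle argument is shorter and yields the extra, unneeded, information that $\kappa_u-\kappa_l-w_0$ has no other zeros in $B_\epsilon(\mu)$. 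Both arguments are complete.
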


\begin{proof}
 Let $h = \kappa_u - \kappa_l - w_0$. Let $\epsilon > 0$ be small enough so that $|h| > 0$ on $\partial B_\epsilon(\mu)$. 
Assumption \ref{ass:analytic} (i) implies that any $z \in \partial B_\epsilon(\mu)$ satisfies
\begin{equation}\label{eqpr:analytic-1}
    \arg \br*{ \frac{h}{|h|}(z)} = \arg \br*{h(z)} \in \begin{cases}  (0,\pi) &  \text{if }z \in \C_+ \cap V_\mu \\
     \set{0,\pi} & \text{if }z \in \R \cap V_\mu \\
     (\pi,2\pi) &  \text{if }z \in \C_- \cap V_\mu \end{cases} .
\end{equation}
Note that $\text{arg}$ is set so that $\arg(z) = 0$ if $z \in \R_+$.
The topological degree (i.e. the winding number) of the map $h/|h|: \partial B_\epsilon(0) \to \partial B_1(0)$ is equal to the number of zeros for $h$ in $B_\epsilon(0)$, counted with multiplicity \cite[pg. 110]{guillemin2010differential}. 
\eqref{eqpr:analytic-1} implies that the topological degree of $h/|h|$ can only be 1, hence $\mu$ is a simple zero of $\kappa_u - \kappa_l$. 
The lemma now follows from the inverse function theorem. 

\end{proof}

\begin{theorem}\label{thm:ess-incl}
Assume that $ \mu \in \text{\rm{int}} (\sigess(T_0))$ is such that $\mu + i \gamma \notin \Sr$. There exists eigenvalues $\lambda_R$ of $T_R$ $(R \in \R_+)$ and a constant $C_0 = C_0(T_0,\gamma,\mu) > 0$ such that 
\begin{equation*}
    |\lambda_R - (\mu + i \gamma)| \leq \frac{C_0}{R}    
\end{equation*}
for all large enough $R$.
\end{theorem}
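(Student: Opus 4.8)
The plan is to run a Rouch\'e argument for the analytic function $g_R$ of Lemma \ref{lem:gR}, whose zeros in $i\gamma + V_\mu$ are exactly the eigenvalues of $T_R$ there, carrying out the outline sketched above. Unlike in Theorem \ref{thm:exp-conv}, near $\mu + i\gamma$ the two exponential terms of $g_R$ are of comparable size --- both $|e^{i\kappa_u(\lambda - i\gamma)R}|$ and $|e^{i\kappa_l(\lambda-i\gamma)R}|$ are close to $1$ there, since $\kappa_u(\mu),\kappa_l(\mu)\in\R$ --- so there is no single limit function with a clean zero; instead I freeze the slowly-varying coefficients and compare $g_R$ with
\[
    g_R^\infty(\lambda) := \beta_u(R,\mu+i\gamma)\,e^{i\kappa_u(\lambda - i\gamma)R} + \beta_l(R,\mu+i\gamma)\,e^{i\kappa_l(\lambda-i\gamma)R}
\]
on a contour that shrinks to $\mu+i\gamma$ at the natural rate $1/R$. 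The $L^\infty$-bounds of Assumptions \ref{ass:exp} and \ref{ass:analytic} make $\beta_u(R,\cdot)$ and $\beta_l(R,\cdot)$ analytic on $i\gamma + V_\mu$ and bounded there by some $M$ independent of $R$ (after shrinking $V_\mu$ if necessary), while the hypothesis $\mu + i\gamma\notin\Sr$ forces $|\beta_u(R,\mu+i\gamma)|,\, |\beta_l(R,\mu+i\gamma)| \geq c > 0$ for all large $R$.

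First I locate the zeros of $g_R^\infty$. The equation $g_R^\infty(\lambda)=0$ is equivalent to $e^{i(\kappa_u-\kappa_l)(\lambda-i\gamma)R} = -\beta_l(R,\mu+i\gamma)/\beta_u(R,\mu+i\gamma)$, a number of modulus in $[c/M,\,M/c]$ and hence with bounded logarithm. Writing $w = (\kappa_u-\kappa_l)(\lambda - i\gamma)$ and $w_0 = (\kappa_u-\kappa_l)(\mu)\in\R$, for each large $R$ I select the solution $w_R$ coming from the integer multiple of $2\pi/R$ closest to $w_0$, so that $|w_R - w_0| \leq C'/R < \delta$ with $\delta$ as in Lemma \ref{lem:kappa-inverse}. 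Setting $\lambda_R^\infty := i\gamma + (\kappa_u-\kappa_l)^{-1}(w_R)$ and using that $(\kappa_u-\kappa_l)^{-1}$ is Lipschitz on $B_\delta(w_0)$ produces a genuine zero of $g_R^\infty$ with $|\lambda_R^\infty - (\mu+i\gamma)| \leq C_0/R$. I then take the comparison contour $\ell_R := \partial B_{A/R}(\lambda_R^\infty)$ for a small constant $A>0$ to be fixed: for large $R$ it lies in $i\gamma + V_\mu$, separates $\lambda_R^\infty$ from the other zeros of $g_R^\infty$ (consecutive zeros being of order $1/R$ apart), and $\text{dist}(\ell_R,\mu+i\gamma)\to 0$.

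The Rouch\'e estimates on $\ell_R$ then go as follows. For the lower bound, each derivative of $g_R^\infty$ extracts a factor $R\,\kappa_j'(\lambda - i\gamma)$; combined with $(\kappa_u-\kappa_l)'(\mu)\neq 0$ (established in the proof of Lemma \ref{lem:kappa-inverse}) and $|\beta_j(R,\mu+i\gamma)|\geq c$, this gives $|(g_R^\infty)'(\lambda_R^\infty)|\geq c'R$ and $|(g_R^\infty)''|\leq CR^2$ on a fixed neighbourhood for large $R$, so a second-order Taylor expansion about the zero $\lambda_R^\infty$ yields $|g_R^\infty(\lambda)|\geq c'A - \frac{1}{2} CA^2 =: \delta_0 > 0$ on $\ell_R$, uniformly in $R$, once $A$ is chosen small. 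For the upper bound, on $\ell_R$ we have $|\lambda-(\mu+i\gamma)|\leq (A+C_0)/R$, so Cauchy's integral formula together with the uniform bound $M$ gives $|\beta_j(R,\lambda)-\beta_j(R,\mu+i\gamma)|\leq C/R$, while $\Im\kappa_j(\mu)=0$ and the Lipschitz continuity of $\kappa_j$ near $\mu$ keep $|e^{i\kappa_j(\lambda-i\gamma)R}|$ bounded on $\ell_R$ uniformly in $R$; hence $|g_R(\lambda) - g_R^\infty(\lambda)|\leq C_2/R < \delta_0 \leq |g_R^\infty(\lambda)|$ on $\ell_R$ for all large $R$. Rouch\'e's theorem then supplies a zero $\lambda_R$ of $g_R$ inside $\ell_R$, which is an eigenvalue of $T_R$ by Lemma \ref{lem:gR}, and $|\lambda_R - (\mu+i\gamma)| \leq |\lambda_R - \lambda_R^\infty| + |\lambda_R^\infty - (\mu+i\gamma)| \leq (A+C_0)/R$, which is the asserted estimate after renaming the constant.

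I expect the crux to be the non-degenerate lower bound for $|g_R^\infty|$ on $\ell_R$. The radius of $\ell_R$ must scale like $1/R$: large enough that the $O(1/R)$ perturbation $g_R - g_R^\infty$ is dominated by the size of $g_R^\infty$ on $\ell_R$ --- which is bounded below by a fixed positive constant thanks to the factor $R$ that differentiation pulls out of the exponentials --- yet small enough to isolate a single zero of $g_R^\infty$ from its neighbours, whose spacing is also of order $1/R$. The assumptions $(\kappa_u-\kappa_l)'(\mu)\neq 0$ and $\mu + i\gamma\notin\Sr$ are precisely what make this balance work; checking the uniform-in-$R$ bounds on $\beta_u,\beta_l$ and the boundedness of the exponential factors on $\ell_R$ is routine but relies on the structure in Assumptions \ref{ass:exp} and \ref{ass:analytic}.
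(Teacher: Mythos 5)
Your argument is correct and is essentially the paper's own proof: the same frozen-coefficient approximation $g_R^\infty$ with $|\beta_j(R,\mu+i\gamma)|$ bounded above and below (from $\mu+i\gamma\notin\Sr$ and the $L^\infty$ bounds), the same use of Lemma \ref{lem:kappa-inverse} to produce a zero $\lambda_R^\infty$ of $g_R^\infty$ within $O(1/R)$ of $\mu+i\gamma$, and Rouch\'e on a contour shrinking at rate $1/R$. The only (cosmetic) difference is the lower bound for $|g_R^\infty|$ on the contour: the paper takes $\ell_R$ to be the preimage under $\kappa_u-\kappa_l$ of a circle of radius $\delta/R$, which yields the exact identity $|g_R^\infty(\ell_R(\theta))| = |\beta_{l,R}|\,|e^{i\delta e^{i\theta}}-1|\,e^{-\Im \kappa_l(\ell_R(\theta)-i\gamma)R}$, whereas you use a round contour plus a derivative/Taylor estimate --- which works, provided the bound $|(g_R^\infty)''|\leq CR^2$ is invoked only on the $O(1/R)$-ball around $\lambda_R^\infty$ (where $|e^{i\kappa_j(\lambda-i\gamma)R}|$ stays bounded because $\Im\kappa_j$ vanishes on $\R\cap V_\mu$), not on a fixed neighbourhood, where the exponentials grow like $e^{cR}$.
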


\begin{proof}
Let $C>0$ be an arbitrary constant independent of $R$ and $\theta$.

Define approximation $g_R^\infty$ to $g_R$ by 
\begin{equation*}
    g_R^\infty(\lambda) = \beta_{u,R} e^{i \kappa_u(\lambda - i \gamma) R} - \beta_{l,R} e^{i \kappa_l(\lambda - i \gamma) R}
\end{equation*}
where $\beta_{u,R} := \beta_u(R,\mu + i \gamma)$ and $\beta_{l,R} := - \beta_l(R,\mu + i \gamma)$. By the definition of $\Sr$, the $L^\infty$ estimates \eqref{eq:psi-t-Linf} of Assumption \ref{ass:exp} (ii) and the $L^\infty$ estimates \eqref{eq:phi-t-Linf} of Assumption \ref{ass:analytic} (ii), there exists $C_1, C_2 > 0$ independent of $R$ such that $\beta_{u,R}$ and $\beta_{l,R}$ satisfy 
\begin{equation}\label{eqpr:analytic-0}
    C_1 \leq |\beta_{j,R}| \leq C_2 \qquad(j = u \text{ or }l)
\end{equation}
for all large enough $R$. $g_R^\infty(\lambda) = 0$ holds if and only if 
\begin{equation}
    (\kappa_u - \kappa_l)(\lambda - i \gamma) = - \frac{i}{R}\br*{ \ln\br*{\frac{\beta_{l,R}}{\beta_{u,R}}} + 2 \pi i n } =: \tilde{\kappa}(n)
\end{equation}
for some $n \in \mathbb{Z}$.

Let $w_0 := (\kappa_u - \kappa_l)(\mu)$ and $n(R) := \floor{R w_0/(2\pi)}$. 
Note that $n(R)$ is well-defined since $\mu \in \R$ and  $\Im w_0 = 0$ by Assumption \ref{ass:analytic}.
Using \eqref{eqpr:analytic-0}, 
\begin{equation}\label{eqpr:analytic-kap-tild}
 |\tilde{\kappa}(n(R)) - w_0| \leq \frac{1}{R} \abs*{ \ln\br*{\frac{\beta_{l,R}}{\beta_{u,R}}}} + \abs*{ \frac{2 \pi n(R)}{R} - w_0 } \leq \frac{C}{R}
\end{equation}
for large enough $R$.
By Lemma \ref{lem:kappa-inverse}, there exists an analytic inverse $(\kappa_u - \kappa_l)^{-1}:B_{2\delta}(w_0) \to \C$ for some small enough $\delta > 0$.
Let $R_0 > 0$ be large enough such that $\tilde{\kappa}(n(R))$ lies in $B_{\delta}(w_0)$ for all $R \geq R_0$. Define 
\begin{equation}
    \lambda_R^\infty = (\kappa_u - \kappa_l)^{-1}(\tilde{\kappa}(n(R))) + i \gamma\qquad (R \geq R_0). 
\end{equation}
Then $g_R^\infty(\lambda_R^\infty) = 0$ and, by the analyticity of $(\kappa_u - \kappa_l)^{-1}$ as well as \eqref{eqpr:analytic-kap-tild},
\begin{equation}\label{eqpr:analytic-lamRinf-est}
    |\lambda_R^\infty - (\mu + i \gamma)| \leq C |\tilde{\kappa}(n(R)) - w_0| \leq \frac{C}{R}
\end{equation}
for large enough $R$.
For $R \geq R_0$, define family $\ell_R = \set{\ell_R(\theta): \theta \in [0,2\pi)}$ of simple closed contours around $\lambda_R^\infty $ by 
\begin{equation}
    \ell_R(\theta) = (\kappa_u - \kappa_l)^{-1}(\tilde{\kappa}(n(R)) + \frac{\delta}{R}e^{i \theta}) + i \gamma.
\end{equation}
By the analyticity of $(\kappa_u - \kappa_l)^{-1}$ and estimate \eqref{eqpr:analytic-lamRinf-est}, we have that
\begin{equation}\label{eqpr:analytic-2}
    |\ell_R(\theta) - (\mu + i \gamma)| \leq |\ell_R(\theta) - \lambda_R^\infty| + |\lambda_R^\infty - (\mu + i \gamma)| \leq \frac{C}{R}
\end{equation}
for large enough $R$.

By a direct computation, we have
\begin{equation}\label{eqpr:analytic-3}
    e^{i \kappa_u(\ell_R(\theta) - i \gamma)R} = \frac{\beta_{l,R}}{\beta_{u,R}}e^{i \delta e^{i \theta}} e^{i \kappa_l(\ell_R(\theta) - i \gamma)R}.
\end{equation}
By Assumption \ref{ass:analytic} (ii), $\beta_u(R,\cdot)$ and $\beta_l(R,\cdot)$ are analytic and bounded in $R$ uniformly in a small enough neighbourhood of $\mu + i \gamma$, so, using the Cauchy integral formula as in \eqref{eq:power-method} and using \eqref{eqpr:analytic-2},
\begin{equation}\label{eqpr:analytic-4}
 |\beta_j(R,\ell_R(\theta)) - \beta_j(R,\mu + i \gamma)| \leq C|\ell_R(\theta) - (\mu + i \gamma)| \leq \frac{C}{R} \qquad (j = u \text{ or } l) 
\end{equation}
for large enough $R$.
Using \eqref{eqpr:analytic-0}, \eqref{eqpr:analytic-3} and \eqref{eqpr:analytic-4},
\begin{align*}
    & |g_R(\ell_R(\theta)) - g_R^\infty(\ell_R(\theta))|  \\
    & \qquad \leq \br*{|\beta_u(R,\ell_R(\theta)) - \beta_{u,R}|\abs*{\frac{\beta_{l,R}}{\beta_{u,R}}e^{i \delta e^{i \theta}}} + |\beta_l(R,\ell_R(\theta)) + \beta_{l,R}|} e^{- \Im \kappa_l(\ell_R(\theta) - i \gamma) R} \\
    & \qquad \leq \frac{C}{R} e^{- \Im \kappa_l(\ell_R(\theta) - i \gamma) R}
\end{align*}
for large enough $R$. 
Similarly, 
\begin{align*}
    |g_R^\infty(\ell_R(\theta))| = |\beta_{l,R}|\abs*{e^{i \delta e^{i \theta}} - 1}e^{- \Im \kappa_l(\ell_R(\theta) - i \gamma) R} \geq C e^{- \Im \kappa_l(\ell_R(\theta) - i \gamma) R}.
\end{align*}
For each large enough $R$, Rouch\'e's condition 
\begin{equation*}
    |g_R(\ell_R(\theta)) - g_R^\infty(\ell_R(\theta))| < |g_R^\infty(\ell_R(\theta))|
\end{equation*}
is satisfied so there exists a zero $\lambda_R$ of $g_R$ in the interior of $\ell_R$ such that 
\begin{equation*}
    |\lambda_R - (\mu + i \gamma)| \leq | \lambda_R - \lambda_R^\infty| + |\lambda_R^\infty - (\mu + i \gamma)| \leq \frac{C_0}{R}
\end{equation*}
for some $C_0>0$ independent of $R$.
\end{proof}

We finish this section with a characterisation of the set $\Sr$ in the case that $T_0$ is a  Schr\"odinger operator with an $L^1$ or an eventually real periodic potential.

\begin{definition}

Define function $\Lambda_u:[0,\infty)\times (i \gamma + V_\mu) \to \C $ by
\begin{equation}\label{eq:Lam-u-defn}
\Lambda_u(R,\lambda) = \tilde{\varphi}_u(R,\lambda - i \gamma) \tpsipd(R, \lambda) - \tilde{\varphi}^d_u(R, \lambda - i \gamma) \tpsip(R, \lambda)
\end{equation}
and define function $\Lambda_l:[0,\infty)\times (i \gamma + V_\mu) \to \C $ by
\begin{equation}\label{eq:Lam-l-defn}
\Lambda_l(R,\lambda) =  \tpsip(R,\lambda) \tilde{\varphi}_l^d(R, \lambda - i \gamma) - \tpsipd(R,\lambda) \tilde{\varphi}_l(R, \lambda - i \gamma). 
\end{equation}
\end{definition}
By the definition of $\beta_u$ and $\beta_l$ in Theorem \ref{lem:gR},
\begin{equation*}
    \beta_u(R,\lambda) = BC[\varphi_l(\cdot,\lambda - i \gamma)]\Lambda_u(R,\lambda)
    \text{ and }
    \beta_l(R,\lambda) = BC[\varphi_u(\cdot,\lambda - i \gamma)]\Lambda_l(R,\lambda)
\end{equation*}
hence we have the following characterisation of $\Sr$:

\begin{corollary}
$\Sr$ can be decomposed as
\begin{equation}
    \Sr = (i \gamma + S_{\mathfrak{r},0}) \cup S_{\mathfrak{r},u} \cup S_{\mathfrak{r},l}
\end{equation}
where
\begin{equation}\label{eq:Sr-0-defn}
    S_{\mathfrak{r},0} := \set*{z \in V_\mu \cap \R: BC[\varphi_u(\cdot,z)] = 0\text{ or } BC[\varphi_l(\cdot,z)] = 0}
\end{equation}
and 
\begin{equation}\label{eq:Sr-j-defn}
    S_{\mathfrak{r},j} := \set*{\lambda \in i \gamma + V_\mu \cap \R: \liminf_{R \to \infty} |\Lambda_j(R,\lambda)| = 0}\qquad(j = u\text{ or }l).
\end{equation} 
\end{corollary}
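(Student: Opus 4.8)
The plan is to substitute the two factorisations recorded just before the statement, namely $\beta_u(R,\lambda) = BC[\varphi_l(\cdot,\lambda - i\gamma)]\,\Lambda_u(R,\lambda)$ and $\beta_l(R,\lambda) = BC[\varphi_u(\cdot,\lambda - i\gamma)]\,\Lambda_l(R,\lambda)$, into the definition \eqref{eq:Sr-defn} of $\Sr$ and to decouple the $R$-dependence. The one thing that makes this work is that the boundary-condition factors $BC[\varphi_u(\cdot,\lambda - i\gamma)]$ and $BC[\varphi_l(\cdot,\lambda - i\gamma)]$ are independent of $R$, so they pass through the $\liminf$ multiplicatively.

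First I would fix $\lambda \in i\gamma + (V_\mu \cap \R)$ and analyse the condition $\liminf_{R \to \infty}|\beta_u(R,\lambda)| = 0$. If $BC[\varphi_l(\cdot,\lambda - i\gamma)] = 0$ then $\beta_u(R,\lambda)\equiv 0$ and the condition holds trivially; if $BC[\varphi_l(\cdot,\lambda - i\gamma)] \neq 0$ then $|\beta_u(R,\lambda)| = |BC[\varphi_l(\cdot,\lambda - i\gamma)]|\,|\Lambda_u(R,\lambda)|$ and, pulling the nonzero constant out of the $\liminf$, the condition is equivalent to $\liminf_{R\to\infty}|\Lambda_u(R,\lambda)| = 0$. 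Hence
\begin{equation*}
    \set*{\lambda \in i\gamma + V_\mu \cap \R : \liminf_{R\to\infty}|\beta_u(R,\lambda)| = 0} = \br*{i\gamma + \set*{z \in V_\mu \cap \R : BC[\varphi_l(\cdot,z)] = 0}} \cup S_{\mathfrak{r},u}.
\end{equation*}
Running the identical argument with $u$ and $l$ interchanged, using the factorisation of $\beta_l$, gives the corresponding identity for $\liminf_{R\to\infty}|\beta_l(R,\lambda)| = 0$, with $S_{\mathfrak{r},l}$ and $BC[\varphi_u]$ in place of $S_{\mathfrak{r},u}$ and $BC[\varphi_l]$.

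Finally I would take the union over $j = u$ and $j = l$, as dictated by \eqref{eq:Sr-defn}. The two $BC$-vanishing sets combine: the condition ``$BC[\varphi_u(\cdot,z)] = 0$ or $BC[\varphi_l(\cdot,z)] = 0$'' is precisely membership of $z$ in $S_{\mathfrak{r},0}$ by \eqref{eq:Sr-0-defn}, so their union is $i\gamma + S_{\mathfrak{r},0}$; adjoining $S_{\mathfrak{r},u}\cup S_{\mathfrak{r},l}$ yields exactly the claimed decomposition. I expect no real obstacle here — the argument is pure bookkeeping — the only point meriting a line of care being the elementary fact that $\liminf_{R\to\infty}|c\,a_R| = |c|\,\liminf_{R\to\infty}|a_R|$ for a nonzero scalar $c$ (with the case $c=0$ handled separately), which is what legitimises separating the $R$-independent boundary factor from the $R$-dependent Wronskian-type factor $\Lambda_j$.
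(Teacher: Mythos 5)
Your proposal is correct and is essentially the paper's own argument: the corollary is stated as an immediate consequence of the factorisations $\beta_u(R,\lambda)=BC[\varphi_l(\cdot,\lambda-i\gamma)]\Lambda_u(R,\lambda)$ and $\beta_l(R,\lambda)=BC[\varphi_u(\cdot,\lambda-i\gamma)]\Lambda_l(R,\lambda)$, with the $R$-independent boundary factors separated from the $\liminf$ exactly as you do (including the case where the $BC$ factor vanishes identically). Nothing further is needed.
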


The elements of $S_{\mathfrak{r},0}$ are precisely the resonances of $T_0$ in $V_\mu \cap \R$, by definition.

\begin{example}[Schr\"odinger operators with $L^1$ potentials, continued]\label{ex:Lev-4}
Consider again the case $p = r = 1$ with $q \in L^1(0,\infty)$ satisfying the necessary conditions ensuring that Assumption \ref{ass:analytic} holds, as discussed in Example \ref{ex:Lev-3}.
In this case, since the functions $E_\pm(R,\lambda)$ and $E^d_\pm(R,\lambda)$ tend to zero as $R\to \infty$ for any $\lambda$, $\Lambda_u$ and $\Lambda_l$ satisfy 
\begin{equation*}
    \abs{\Lambda_j(R,\lambda)} \to \abs*{ \sqrt{\lambda - i \gamma} - \sqrt{\lambda}}\text{ as }R \to \infty \qquad(j = u\text{ or }l)    
\end{equation*}
for all $\lambda \in i \gamma + V_\mu$, where the square-root is understood to have been analytically continued from $\C_+$ ($\C_-$) into $V_\mu$ in the case $j = u$ ($j = l$) respectively. Since $\sqrt{\lambda - i \gamma} \neq  \sqrt{\lambda}$ for all $\lambda \in i \gamma + V_\mu$, regardless of which branch-cut for the square-root is chosen, we have
\begin{equation*}
    S_{\mathfrak{r},u} = S_{\mathfrak{r},l} = \emptyset. 
\end{equation*}
Consequently, 
\begin{equation*}
    \Sr = i\gamma + S_{\mathfrak{r},0},
\end{equation*}
that is, $\mu + i \gamma \in \Sr$ if and only if $\mu$ is a resonance of $T_0$
\end{example}

\begin{example}[Eventually periodic Schr\"odinger operators, continued]\label{ex:Floq-4}

Consider the case $p = r = 1$ with $q$ real-valued and $q|_{[X,\infty)}$ $a$-periodic for some $X \geq 0$ and $a > 0$. Assume that $\eta \in  [0,\pi)$, so that $T_0$ is equipped with a real mixed boundary condition at 0. Note that $T_0$ is self-adjoint in this case. $q$ is eventually real periodic so by Example \ref{ex:Floq-3}, Assumption \ref{ass:analytic} is satisfied. 
The sets $S_{\mathfrak{r},u}$ and $S_{\mathfrak{r},l}$ satisfy 
\begin{equation}\label{eq:empty-Floq-4}
    S_{\mathfrak{r},u} = S_{\mathfrak{r},l} = \emptyset. 
\end{equation}
Consequently, 
\begin{equation*}
    \Sr = i\gamma + S_{\mathfrak{r},0},
\end{equation*}
that is, $\mu + i \gamma \in \Sr$ if and only if $\mu$ is a resonance of $T_0$

\end{example}

\begin{proof}[Proof of \eqref{eq:empty-Floq-4}]

We will only prove \eqref{eq:empty-Floq-4} for $j=u$, the proof for $j = l$ is similar. 

Assume for contradiction that $S_{\mathfrak{r},u}$ is non-empty and let $\lambda \in S_{\mathfrak{r},u}$. By unique continuation, expressions analogous to \eqref{eq:Floq-psit} and \eqref{eq:Floq-psit-d} hold for $\tilde{\varphi}_u$ and $\tilde{\varphi}^d_u$. 
By these expressions, there exists a sequence $\seqn{x_{0,n}} \subset [X,X + a)$ such that $\Lambda_u(x_{0,n},\lambda) \to 0$ as $n \to \infty$.
Let $x_0$ be any accumulation point of $\seqn{x_{0,n}}$. 
Then, since $\Lambda_u(\cdot,\lambda)$ is absolutely continuous, it holds that $\Lambda_u(x_0,\lambda) = 0$, so, 
\begin{equation}\label{eqpr:Floq-4-1}
    \varphi_u(x_0,\lambda - i \gamma) \psi'_+(x_0,\lambda) - \varphi'_u(x_0,\lambda - i \gamma) \psi_+(x_0, \lambda) = 0.
\end{equation}
Noting that the solutions $\phi_1(\cdot,\lambda - i \gamma)$ and $\phi_2(\cdot,\lambda - i \gamma)$ defined by \eqref{eq:phi-1-2} are real since $\lambda - i \gamma \in \R$ and that the analytic continuations $\rho_u(\rho_l)$ for $\rho_+$ from $\C_+(\C_-)$ respectively satisfy $\overline{\rho_u(\lambda - i \gamma)} = \rho_l(\lambda - i \gamma)$, the expression analogous to \eqref{eq:Floq-soln-eigenvect} for the Floquet solution $\varphi_u$ implies that
\begin{equation*}
    \overline{\varphi_u}(x,z) = - \phi_2(X+a,z) \phi_1(x,z) +  (\phi_1(X+a,z) - \overline{\rho_u}(z)) \phi_2(x,z) = \varphi_l(x,z) 
\end{equation*}
where $z := \lambda - i \gamma$.
Consequently we have,
\begin{equation}\label{eqpr:Floq-4-2}
    \varphi_l(x_0,\lambda - i \gamma) \overline{\psi'_+}(x_0,\lambda) - \varphi'_l(x_0,\lambda - i \gamma) \overline{\psi_+}(x_0, \lambda) = 0.
\end{equation}
By \eqref{eqpr:Floq-4-1} and \eqref{eqpr:Floq-4-2}, there exists $C_{1,u},C_{2,u},C_{1,l},C_{2,l} \in \C \bs \set{0}$ independent of $x$ such that the functions
\begin{equation*}
    u_u(x,\lambda ) := \begin{cases} C_{1,u} \varphi_u(x,\lambda - i \gamma) & \text{if }x \in [0,x_0) \\
    C_{2,u} \psi_+(x,\lambda) & \text{if }x \in [x_0,\infty)
    \end{cases}
\end{equation*}
and 
\begin{equation*}
    u_l(x,\lambda ) := \begin{cases} C_{1,l} \varphi_l(x,\lambda - i \gamma) & \text{if }x \in [0,x_0) \\
    C_{2,l} \overline{\psi_+}(x,\lambda) & \text{if }x \in [x_0,\infty)
    \end{cases}
\end{equation*}
are absolutely continuous and solve the Schr\"odinger equation $\tilde{T}_{x_0} u = \lambda u$.
Note that $\overline{\psi_+}$ solves the Schr\"odinger equation $\tilde{T}_{x_0} u = \lambda u$ on $[x_0,\infty)$ because $q$ is real-valued. 
By orthogonality, there exists $(a_u,a_l) \in \C^2 \bs \set{(0,0)}$ such that 
\begin{equation*}
    BC[a_u u_u(\cdot,\lambda) + a_l u_l(\cdot,\lambda)] = a_u C_{1,u} BC[\varphi_u(\cdot,\lambda - i \gamma)] + a_l C_{1,l} BC[\varphi_l(\cdot,\lambda - i \gamma)] = 0
\end{equation*}
This implies that $\lambda$ is an eigenvalue of $T_{x_0}$ with corresponding eigenfunction $u:=a_u u_u + a_l u_l$. 
By a standard integration by parts, 
\begin{equation*}
    \Im(\lambda) = \gamma\frac{\int_0^{x_0}|u|^2}{\int_0^\infty|u|^2} < \gamma
\end{equation*}
which is the desired contradiction.
\end{proof}

\section{Numerical examples}\label{sec:num-ex}

In this section, we illustrate the results from Sections \ref{sec:spec-incl} and \ref{sec:ess-incl} with numerical examples. 

\begin{figure}[t]
\centering
\includegraphics[width=\linewidth]{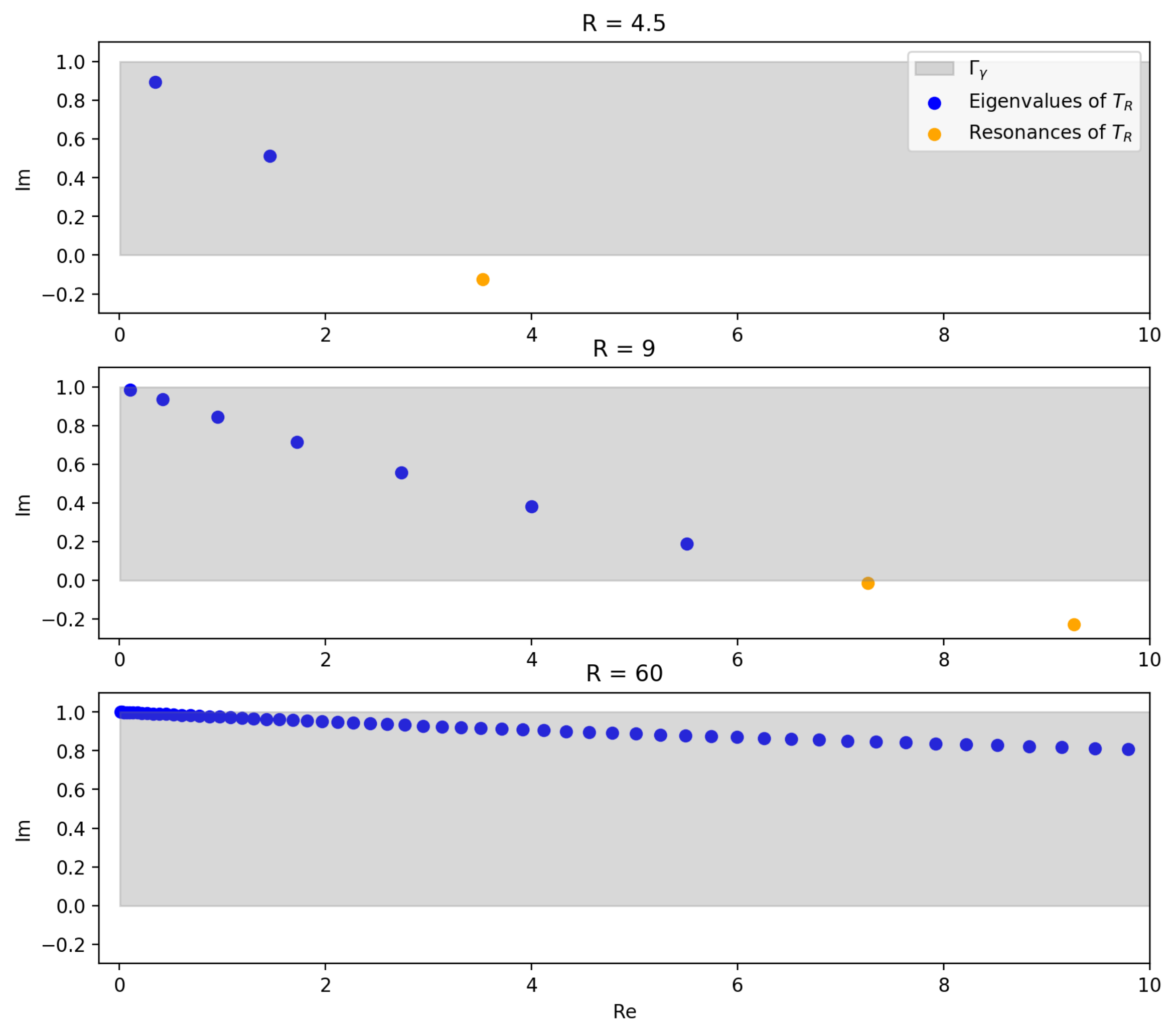}
\caption{Plot of the eigenvalues and resonances of the operator $T_R$  defined by \eqref{eq:ex-1-TR}.}
\label{fig:1}
\end{figure}

\begin{example}\label{ex:num-1}

Consider perturbed operators of the form 
\begin{equation}\label{eq:ex-1-TR}
    T_R = - \frac{\d^2}{\d x^2} + i \chi_{[0,R]}(x) \qquad (R \in \R_+)
\end{equation}
endowed with Dirichlet boundary conditions at 0. 
This corresponds to the case $p = r = 1 $, $q = 0$, $\eta = 0$ and $\gamma = 1$ in Sections \ref{sec:spec-incl} and \ref{sec:ess-incl}.

By an explicit computation, $\lambda \in \C \bs [0,\infty)$ is an eigenvalue of $T_R$ if and only if
\begin{equation}\label{eq:ex-1-fR}
f_R(\lambda) = i \sqrt{\lambda} \sin ( \sqrt{\lambda - i} R ) - \sqrt{\lambda - i } \cos ( \sqrt{\lambda - i } R ) = 0.
\end{equation}
Note that our convention is that the branch cut of the square-root is along $[0,\infty)$. 
By suitably analytically continuing the square root function in \eqref{eq:ex-1-fR}, any $\lambda$ in the lower right quadrant of the complex plane is a resonance of $T_R$ if and only if $f_R(\lambda) = 0$. 

To numerically compute the zeros of $f_R$, hence the eigenvalues and resonances of $T_R$, in a fixed bounded region, we use a Python implementation of an algorithm utilising the argument principle \cite{dellnitz2002locating}. The results are illustrated in Figure \ref{fig:1}.

For small enough $R > 0$, $T_R$ has no eigenvalues \cite{frank2016number}. 
As $R$ increased, we observe resonances in the lower half plane emerging out of $\sigess(T_R) = [0,\infty)$, to become eigenvalues in the numerical range
\begin{equation*}
\Gamma_\gamma := \sigess(T_0) \times i [0,\gamma] = [0,\infty)\times i[0,\gamma]      
\end{equation*}
of $T_0$ accumulating to $\sigess(T) = i \gamma + [0,\infty)$, as expected by Theorem \ref{thm:ess-incl}.

\end{example}

\begin{figure}[t]
\centering
\includegraphics[width=\linewidth]{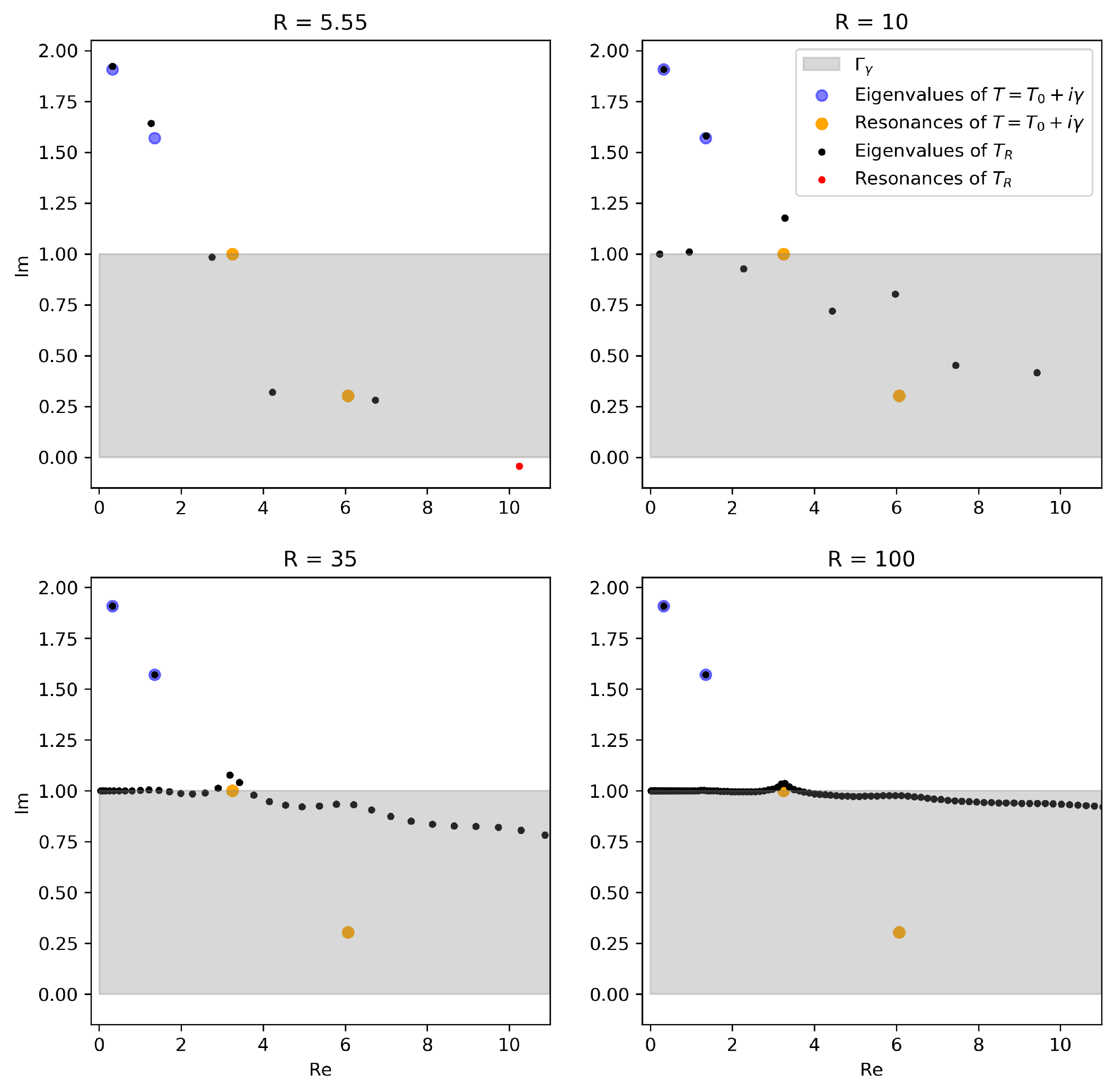}
\caption{Plot of eigenvalues and resonances of the operators $T_R$ and $T= T_0 + i\gamma$ defined by \eqref{eq:ex-2-TR}, with $R_0 = 4.7$.}
\label{fig:2}
\end{figure}

\begin{example}\label{ex:num-2}

Consider perturbed operator of the form
\begin{equation}\label{eq:ex-2-TR}
    T_R = T_0 + i \chi_{[0,R]}(x) = - \frac{\d^2}{\d x^2} + i \chi_{[0,R_0]}(x) + i \chi_{[0,R]}(x)\qquad(R \in \R_+)
\end{equation}
endowed with Dirichlet boundary conditions at 0.
This corresponds to the case $p = r = 1$, $\eta = 0$, $q = i \chi_{[0,R_0]}$ for some $R_0 > 0$ and $\gamma =1$ in Sections \ref{sec:spec-incl} and \ref{sec:ess-incl}. 

By an explicit computation, $\lambda \in \C \bs [0,\infty)$ is an eigenvalue of $T_R$ if and only if
\begin{multline}\label{eq:ex-2-fR}
f_R(\lambda) =  i\sqrt{\lambda - i }\left[ e^{-2i\sqrt{\lambda - i }(R - R_0)} - \frac{\sqrt{\lambda - i} - \sqrt{\lambda }}{\sqrt{\lambda - i} + \sqrt{\lambda}} \right] \sin ( \sqrt{\lambda - 2 i  } R_0 ) \\
- \sqrt{\lambda - 2 i } \left[ e^{-2 i \sqrt{\lambda - i }(R - R_0)} + \frac{\sqrt{\lambda - i} - \sqrt{\lambda}}{\sqrt{\lambda - i} + \sqrt{\lambda}} \right] \cos(\sqrt{\lambda - 2 i }R_0) = 0
\end{multline}
As before, by suitably analytically continuing the square root function in \eqref{eq:ex-2-fR}, any $\lambda$ in the lower right quadrant of the complex plane is a resonance of $T_R$ if and only if $f_R(\lambda) = 0$. 

A numerical computation of the zeros of $f_R$, hence the eigenvalues and resonances of $T_R$ is shown in Figure \ref{fig:2}.
We observe that there are eigenvalues of $T_R$ converging rapidly to the eigenvalues of $T$ and that eigenvalues of $T_R$ accumulate to $\sigess(T) = i \gamma + [0,\infty)$, as expected by Theorems \ref{thm:exp-conv} and \ref{thm:ess-incl}.

Recall that Example \ref{ex:Floq-4} guarantees that the rate of convergence of eigenvalues of $T_R$ to $\mu \in \text{int}(\sigess(T)) = i \gamma + (0,\infty)$ is $O(1/R)$, unless $\mu$ is a resonance of $T$.
The limit operator $T$ for our choice of parameters has a resonance embedded in $\sigess(T)$.
We seem to observe a distinction between the way the eigenvalues of $T_R$ accumulate to the resonance compared to other points in the interior of $\sigess(T)$. 
It seems reasonable to conjecture that the rate of convergence to embedded resonances is indeed slower that $O(1/R)$.

\end{example}

\begin{figure}[t]
\centering
\includegraphics[width=\linewidth]{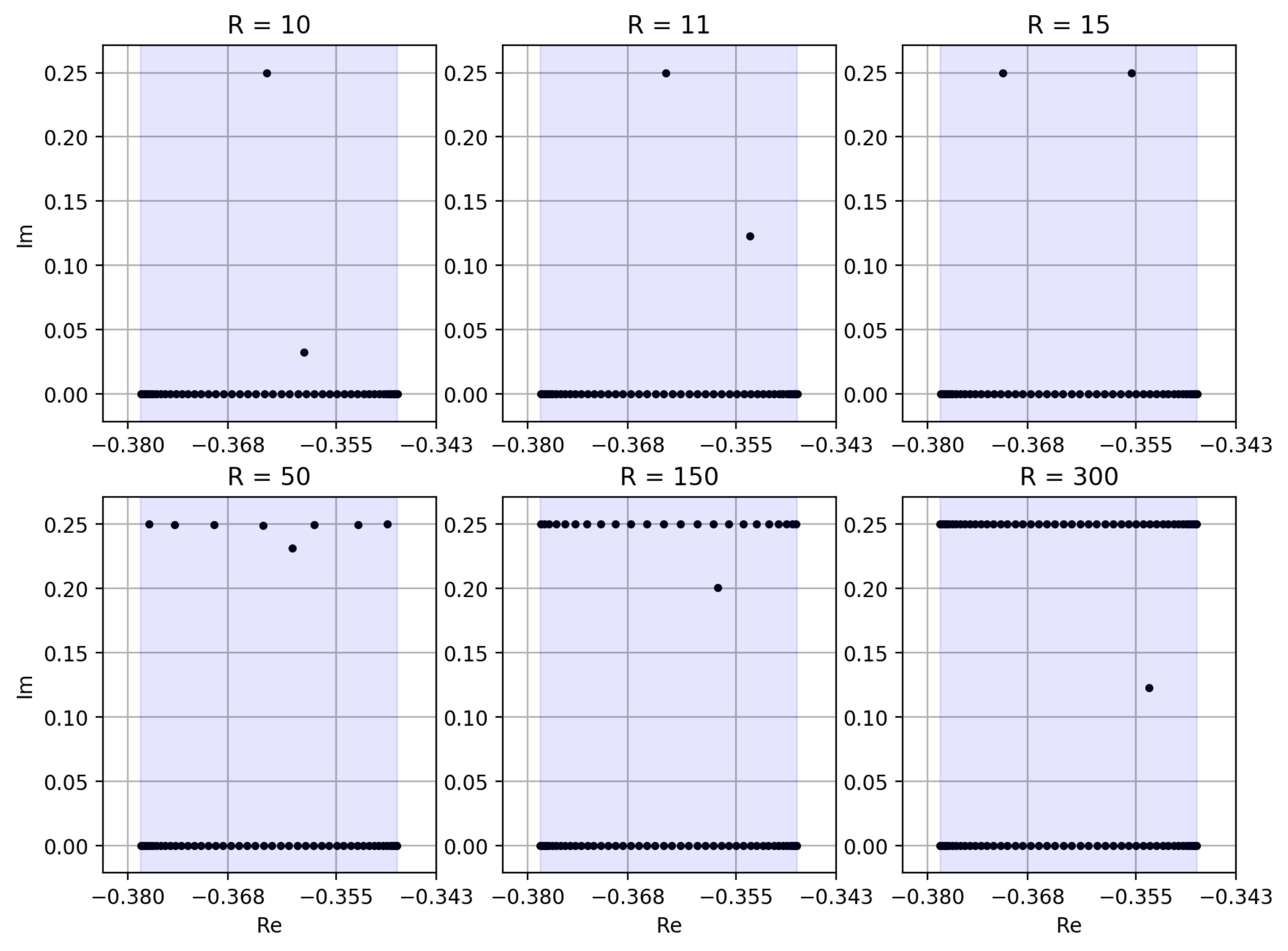}
\caption{ Plot of eigenvalues of the domain truncation and finite difference approximation $T_{R,X,h}$ of the operator $T_R$ defined by \eqref{eq:ex-3-TR}. $h = 0.05$ and $X - R = 300$ are fixed. The region $B \times i \R$ is shaded in light blue.}\label{fig:3}
\end{figure}

\begin{example}\label{ex:num-3}

Consider perturbed operators of the form 
\begin{equation}\label{eq:ex-3-TR}
    T_R = T_0 + \frac{i}{4}\chi_{[0,R]}(x) = - \frac{\d^2}{\d x^2} + \sin(x) + \frac{i}{4}\chi_{[0,R]}(x) \qquad (R \in \R_+)
\end{equation}
endowed with a Dirichlet boundary condition at 0.
This corresponds to the case $p = r = 1$, $\eta = 0$, $q(x) = \sin(x)$ and $\gamma = \frac{1}{4}$ in Section \ref{sec:spec-incl} and \ref{sec:ess-incl}. 
The essential spectrum of $T_0$ has a band gap structure - the first spectral band, which we denote by $B$, is approximately $[-0.3785,-0.3477]$ \cite[Example 15]{marletta2012eigenvalues}.

To numerically compute the eigenvalues of $T_R$, we first perform a domain truncation onto an interval $[0,X]$, imposing a Dirichlet boundary condition at $X$. 
Applying a finite difference method with step-size $h$, we obtain a finite matrix $T_{R,X,h}$.
For fixed $R$, the eigenvalues of $T_{R,X,h}$ accumulate to every point in $\sigma(T_R)$ as $X \to \infty$ and $h\to 0$.
Moreover, any point of accumulation that does not lie in $\sigma(T_R)$ must lie on the real-line (see \cite{chatelin1983spectral} and \cite{marletta2012eigenvalues}).

For a fixed small value of $h$, a fixed large value of $X - R$, the eigenvalues of $T_{R,X,h}$ for increasing $R$ are plotted in Figure \ref{fig:3}. 
We first observe an accumulation of eigenvalues of $T_{R,X,h}$ to the interval $B$ in $\R$. 
These eigenvalues of $T_{R,X,h}$ are due to the domain truncation method approximating $\sigess(T_R)$ and should not be interpreted as approximations of the eigenvalues of $T_R$. 
All other points in the plots are approximations of the eigenvalues of $T_R$.

In Figure \ref{fig:3}, we observe that as $R$ increases, eigenvalues of $T_R$ emerge out of the spectral band $B$ and tend to the shifted spectral band $i \gamma + B$, which is a subset of $\sigess(T)$.
For large $R$, we observe an accumulation of eigenvalues to $i \gamma + B$. 
The eigenvalues of $T_R$ accumulating to $i \gamma + B$ seem to be contained in $B \times i(0,\gamma)$. 
If this is indeed the case then by Bolzano-Weiestrass we expect that there is spectral pollution in $B \times i(0,\gamma)$

\end{example}

\subsection*{Acknowledgements} 
The author would like to express his gratitude to his PhD supervisors Jonathan Ben-Artzi and Marco Marletta, for helpful discussion and guidance. 
The author's research is supported by the United Kingdom Engineering and Physical
Sciences Research Council, through its Doctoral Training Partnership with Cardiff University.

\subsection*{Data Availability}

Data available on request from the author.

\bibliography{DBPollrefs.bib}
\bibliographystyle{plain}
%\printbibliography 

\end{document}